\newtheorem{thm}{Theorem}[section]
\newtheorem{prop}[thm]{Proposition}
\newtheorem{lem}[thm]{Lemma}
\newtheorem{cor}[thm]{Corollary}
\numberwithin{equation}{section}
\def\N{{\Bbb N}}
\def\Z{{\Bbb Z}}
\def\Q{{\Bbb Q}}
\def\R{{\Bbb R}}
\def\C{{\Bbb C}}
\def\A{{\Bbb A}}
\def\emp{\varnothing}
\def\fa{{\frak a}}
\def\fh{{\frak h}}
\def\fz{{\frak z}}
\def\sG{\mathsf G}
\def\sP{\mathsf P}
\def\sm{\mathsf m}
\def\sn{\mathsf n}
\def\cO{\frak o}
\def\cL{\mathscr L}
\def\cB{{\mathscr B}}
\def\cH{{\mathscr H}}
\def\cM{{\mathscr M}}
\def\cV{{\mathcal V}}
\def\cF{{\mathscr F}}
\def\cG{{\mathcal G}}
\def\cL{{\mathscr L}}
\def\cW{{\mathcal W}}
\def\cU{{\mathcal U}}
\def\cJ{{\mathcal J}}
\def\Re{{\operatorname {Re}}}
\def\Im{{\operatorname {Im}}}
\def\tr{{\operatorname{tr}}}
\def\nr{{\operatorname{N}}}
\def\Mat{{\operatorname{M}}}
\def\diag{{\operatorname {diag}}}
\def\vol{{\operatorname{vol}}}
\def\leq{\leqslant}
\def\geq{\geqslant}
\def\bsl{\backslash}
\def\z{\zeta}
\def\e{\varepsilon}
\def\cQ{{\mathcal Q}}
\def\d{{\rm{d}}}
\def\fX{{\frak X}}
\def\cD{\mathscr D}
\def\fin{{\rm{\bf {f}}}}
\def\bK{{\bf K}}
\def\b1{{\bold 1}}
\def\cJ{{\mathscr J}}
\def\jj{{j}}
\def\ss{{\mathsf s}}
\newcommand{\sslash}{\mathbin{/\mkern-6mu/}}
\def\bG{{\mathbb G}}
\title{Weighted equidistribution theorem for Siegel modular forms of degree $2$}
\author{Masao Tsuzuki}
\address{Faculty of Science and Technology, Sophia University, Kioi-cho 7-1 Chiyoda-ku Tokyo, 102-8554, Japan}
\email{m-tsuduk@sophia.ac.jp}
\begin{document}
\maketitle

\begin{abstract}
We deduce a weighted equidistribution theorem of the Satake parameters of Sigel cusp forms on ${\bf Sp}_2(\Z)$ with growing even weights. 
\end{abstract}

\section{Introduction} \label{sec:Intro}
Let ${\bf GSp}_2$ be the symplectic similitude group of rank 2, which is a reductive connected algebraic $\Q$-group defined as 
\begin{align*}
{\bf GSp}_2=\{g\in {\bf GL}_4|\,{}^t g\,\left[\begin{smallmatrix} 0 & 1_2 \\ -1_2 & 0 \end{smallmatrix}\right]\,g=\nu(g)\,\left[\begin{smallmatrix} 0 & 1_2 \\ -1_2 & 0 \end{smallmatrix}\right]\,(\exists \nu(g)\in {\bf GL}_1)\},
\end{align*}
whose center ${\bf Z}$ consists of all the scalar matrices in ${\bf GL}_4$. Set ${\bf G}={\bf PGSp}_2:={\bf G}/{\bf Z}$. The identity connected component ${\bf G}(\R)^0$ of real points of ${\bf G}$ transitively acts on the Siegel upper-half space $\fh_2:=\{Z=\left[\begin{smallmatrix} z_1 & z_2 \\ z_2 & z_3 \end{smallmatrix} \right] \in {\bf M}_2(\C)|\,\Im(Z)\gg 0\}$ by 
$$
g.Z=(AZ+B)(CZ+D)^{-1}, \quad g=\left[\begin{smallmatrix} A & B \\ C & D \end{smallmatrix}\right]\in {\bf GSp}_2(\R)^{0},\,Z\in \fh_2.
$$ 
For a positive even integer $l$, let $S_l({\bf Sp}_2(\Z))$ denote the space of Siegel cusp forms of weight $l$, i.e., the set of all those holomorphic bounded functions $\Phi:\fh_2\rightarrow \C$ such that 
\begin{align}
\Phi(\gamma.Z)=\det(CZ+D)^{l}\Phi(Z), \quad \gamma=\left[\begin{smallmatrix} A & B \\ C & D \end{smallmatrix}\right]\in {\bf Sp}_2(\Z).
 \label{Spautomorphy}
\end{align}
The space $S_l({\bf Sp}_2(\Z))$ is a finite dimensional Hilbert space with the inner-product whose associated norm is 
$$
\|\Phi\|^{2}=\int_{{\bf Sp}_2(\Z)\bsl \fh_2}|\Phi(Z)|^2 (\det \Im Z)^{l}\d\mu_{\fh_2}(Z), \quad \Phi \in S_l({\bf Sp}_2(\Z)), 
$$
where 
\begin{align}
\d\mu_{\fh_2}(Z)=(\det \Im Z)^{-3}\prod_{j=1}^{3}2^{-1}|\d z_j\wedge \d\bar z_j|\label{BergmannMetSp}
\end{align}
is the invariant measure on $\fh_2$. Any element $\Phi\in S_l({\bf Sp}_2(\Z))$ is given by its Fourier expansion
$$
\Phi(Z)=\sum_{T\in \cQ^{+}}A_{\Phi}(T)\,e^{2\pi \sqrt{-1}\tr(ZT)}, \quad Z\in \fh_2
$$
with the set of Fourier coefficients $\{A_{\Phi}(T)\}_{T\in \cQ^{+}}$, where $\cQ^{+}$ is the set of positive definite matrices in $\cQ:=\bigl\{T=\left[\begin{smallmatrix} b & a/2 \\ a/2 & c\end{smallmatrix} \right]|\,a,b,c\in \Z\bigr\}$. The latter space $\cQ$ carries an action of the modular group ${\bf SL}_2(\Z)$ given as $\cQ\times {\bf SL}_2(\Z)\ni (T,\delta)\mapsto \delta T{}^t \delta\in \cQ$. From \eqref{Spautomorphy}, the Fourier coefficients $A_{\Phi}(T)$ $(T\in \cQ^{+})$ has the modular invariance $A_{\Phi}(\delta T{}^t\delta)=A_{\Phi}(T)$ $(\delta\in {\bf SL}_2(\Z))$, which allows one to regard $T\mapsto A_{\Phi}(T)$ as a function on the orbit space ${\bf SL}_2(\Z)\bsl \cQ^{+}$. Let $D<0$ be a fundamental discriminant and $\chi$ a character of the ideal class group ${\rm Cl}_{D}$ of the imaginary quadratic field $\Q(\sqrt{D})$. Let $[T]\in {\rm Cl}_{D}$ be the image of $T\in \cQ_{\rm prim}^{+}(D)$ by the natural isomorphism ${\bf SL}_2(\Z)\bsl {\cQ}_{\rm prim}^{+}(D)\cong {\rm Cl}_D$, where 
$$\cQ_{\rm prim}^{+}(D):=\Bigl\{\left[\begin{smallmatrix} b & a/2 \\ a/2 & c\end{smallmatrix} \right]\in \cQ^{+}\Bigm|\,a^2-4bc=D,\,(a,b,c)=1\,\Bigr\}. 
$$
Let $\chi$ be a character of ${\rm Cl}_D$ and $\sigma$ the non trivial element of ${\rm Gal}(\Q(\sqrt{D})/\Q)$. Since $\fa \fa^{\sigma}$ is principal for any invertible ideal $\fa$ of $\Q(\sqrt{D})$, we have that $\chi\chi^{\sigma}$ is trivial; thus $\chi=\chi^{\sigma}$ if and only if $\chi^2={\bf 1}$. Recall that $\chi=\chi^\sigma$ if and only if $\chi$, when viewed as an idele class character of $\Q(\sqrt{D})$, is of the form $\nr_{\Q(\sqrt{D})/\Q}\circ \chi_0$ with some idele class character $\chi_0$ of $\Q$. Following \cite{KST}, let us define
$$
\omega^{\Phi}_{l,D,\chi}:=c_{l,D}\,d_\chi\,\frac{|R(\Phi,D,\chi^{-1})|^2}{\|\Phi\|^2}, \quad \Phi \in S_l({\bf Sp}_2(\Z)), 
$$
where 
$$
R(\Phi,D,\chi):=\sum_{T\in {\bf SL}_2(\Z)\bsl \cQ_{\rm prim}^{+}(D)}A_{\Phi}(T)\,\chi([T])
$$ 
and 
\begin{align*}
d_{\chi}&:=\begin{cases} 1 \quad (\chi^2={\bf 1}), \\
2 \quad (\chi^2\not={\bf 1}),
\end{cases}
\\ 
c_{l,D}&:=
\frac{\sqrt{\pi}}{4}(4\pi)^{3-2l}\Gamma(l-3/2)\Gamma(l-2)
\times \left(\frac{|D|}{4}\right)^{3/2-l}\frac{4}{w_D\,h_D},
\end{align*}
where $w_D$ is the number of roots of unity in $\Q(\sqrt{D})$ and $h_D:=\#{\rm Cl}_D$ is the class number of $\Q(\sqrt{D})$. Let $\cF_l$ be a $\C$-basis of $S_l({\bf Sp}_2(\Z))$ consisting of joint-eigenfucntions of all the Hecke operators. In the work \cite{KST}, Kowalski-Saha-Tsimerman investigated the quantity $\omega_{l,D,\chi}^{\Phi}$ from a statistical point of view, including the asymptotic behavior of the average of spinor $L$-values $L_{\fin}(s,\pi_\Phi)$ for $s$ on the convergent range of the Euler product taken over the ensemble $\{\omega_{l,D,\chi}^{\Phi}|\,\Phi\in \cF_l\}$ with growing $l$. Later, the asymptotic formula for the central spinor $L$-values is proved by Blomer in \cite{Blomer}, where even a second moment formula is erabolated by a deep analysis of diagonal and off-diagonal cancellation of terms from the Petersson formula for Siegel modular forms. In our previous paper \cite{Tsud2019}, based on a different technique involving the archimedean Shintani functions and Liu's computation of local Bessel priods for spherical functions, we extend the (first moment) asymptotic formula for central standard $L$-values of cusp forms on ${\rm SO}(2,m)$ $(m\geq 3)$ in a general setting. In this paper, we examine the case when $m=3$ in detail. 
\subsection{Description of results}
To state the main result, we need additional notation. For $\Phi \in \cF_l$, let $\pi_\Phi$ be the automorphic representation of ${\bf G}(\A)$ generated by the function $\tilde \Phi$ on the adeles ${\bf G}(\A)$ well-defined by the relation $\tilde \Phi(\gamma g_\infty u_\fin )=\det(\sqrt{-1}C+D)^{-l}\Phi((A\sqrt{-1}+B)(C\sqrt{-1}+D)^{-1})$ for $\gamma \in {\bf G}(\Z)$, $g_\infty=\left[\begin{smallmatrix} A & B \\ C & D \end{smallmatrix}\right]\in {\bf G}(\R)^{0}$ and $u_\fin \in {\bf G}(\widehat \Z)$. By \cite[Corollary 3.3]{NPS}, $\pi_\Phi$ is irreducible and cuspidal; as such it can be decomposed as the restricted tensor product $\pi_\Phi\cong \bigotimes_{p\leq \infty} \pi_{\Phi,p}$ of irreducible smooth representations $\pi_{\Phi,p}$ of ${\bf G}(\Q_p)$ for $p<\infty$ and $\pi_{\Phi,\infty}$ a holomorphic discrete series representation of ${\bf G}(\R)$ of scalar weight $l$. Let ${\bf B}$ be a Borel subgroup consisting of all matrices in ${\bf G}$ of the form 
\begin{align}
\left[\begin{smallmatrix} A & 0 \\ 0 & \lambda {}^{t}A^{-1} \end{smallmatrix} \right]\,\left[\begin{smallmatrix} 1_2 & B \\ 0 & 1_2 \end{smallmatrix} \right] \quad ((\lambda,A)\in {\bf GL}_1 \times{\bf GL}_2, \quad B={}^t B)
 \label{SiegelParab}
\end{align}
with $A$ being an upper-triangular matrix of degree $2$. Let ${\bf U}$ denote the unipotent radical of ${\bf B}$, which consists of all the elements \eqref{SiegelParab} such that $A$ is an upper-triangular unipotent matrix. For a prime number $p$, set 
$$
\fX_p:=(\C/2\pi \sqrt{-1}(\log p)^{-1}\Z)^2
$$
and $W(C_2)$ the $C_2$-Weyl group which, as an automorphism group of $\fX_p$, is generated by the two elements $s_1,s_2$ given as $s_1(\nu_1,\nu_2)=(\nu_2,\nu_1)$ and $s_2(\nu_1,\nu_2)=(\nu_1,-\nu_2)$. For $\nu=(\nu_1,\nu_2)\in \fX_p$, let $I_p(\nu)={\rm Ind}_{{\bf B}(\Q_p)}^{{\bf G}(\Q_p)}(\chi_\nu)$ denote the parabolically induced representation of ${\bf G}(\Q_p)$ from a quasi-character $\chi_\nu$ of ${\bf B}(\Q_p)$ given as
\begin{align}
\chi_\nu(\diag(t_1,t_2,\lambda t_1^{-1},\lambda t_2^{-1})n)=|t_1|_p^{-\nu_{1}+\nu_2}|t_{2}|_p^{-\nu_1-\nu_{2}}|\lambda|_p^{\nu_1}, (t_1,t_2,\lambda)\in (\Q_p^{\times})^3,\,n\in {\bf U}(\Q_p).
 \label{GspBorelchar}
\end{align}
It is known that $I_p(\nu)$ admits a unique ${\bf G}(\Z_p)$-spherical constituent to be denoted by $\pi_p^{{\rm ur}}(\nu)$. Note that $\pi_p^{{\rm ur}}(w\nu)\cong \pi_p^{{\rm ur}}(\nu)$ for all $\nu \in \fX_p$ and $w\in W(C_2)$. The local spinor $L$-factor attached to $\pi_{p}^{\rm ur}(\nu)$ is defined as 
$$
L(s,\pi^{\rm ur}(\nu))=\prod_{j=1}^{2} (1-\alpha_{j} p^{-s})^{-1}
(1-\alpha_j^{-1}p^{-s})^{-1}
$$
with $\alpha_j=p^{-\nu_j}\,(j=1,2)$. Let $\nu_p(\Phi)=(\nu_{1,p},\nu_{2,p})\in \fX_p/W(C_2)$ be the unique point such that $\pi_{\Phi,p} \cong \pi_p^{\rm ur}(\nu_p(\Phi))$.
The spinor $L$-function $L_\fin(s,\pi_\Phi)$ of $\pi_\Phi$ and its completion $L(s,\pi_\Phi)$ are originally defined as the degree $4$ Euler product 
\begin{align*}
L(s, \pi_\Phi)&:= \Gamma_{\C}(s+1/2)\Gamma_{\C}(s+l-3/2)\times L_\fin(s,\pi_\Phi), \\
L_\fin(s,\pi_\Phi)&:=\prod_{p<\infty}L(s,\pi_p^{\rm ur}(\nu_p(\Phi))), \quad \Re s\gg 0,
\end{align*}
where $\Gamma_{\C}(s):=2(2\pi)^{-s}\Gamma(s)$. In this paper, we use the symbol $\fin $ to denote the set of all the prime numbers, or as a subscript to indicate that the object is related to the set of finite adeles. It is known that $L(s,\pi_\Phi)$ has a meromorphic continuation to $\C$ with the functional equation $L(1-s,\pi_\Phi)=L(s,\pi_\Phi)$ admitting possible poles at $s=3/2,-1/2$ (\cite{And1} and \cite{And2}). It should be also recalled that these poles are at most simple and they occur if and only if $\Phi$ is the Saito-Kurokawa lifting from an elliptic cusp form on ${\bf SL}_2(\Z)$ (\cite{Oda}, \cite{PS1983}).

Let ${\mathcal {AI}}(\chi)\cong \bigotimes_{p\leq \infty}{\mathcal {AI}}(\chi)_p$ be the automorphic induction from an idele class character $\chi$ of $\Q(\sqrt{D})$, which is an isobaric automorphic representation of ${\bf GL}_2(\A)$; it is not cuspdal if and only if $\chi=\chi_0 \circ \nr_{\Q(\sqrt{D})/\Q}$ with some Hecke character $\chi_0$ of $\Q$ in which case ${\mathcal {AI}}(\chi)=\chi_0 \boxplus \chi_0\eta_{D}$, where $\eta_D$ is the quadratic idele class character of $\Q$ corresponding to $\Q(\sqrt{D})$ by class field theory. Let $L_\fin(s,{\mathcal {AI}}(\chi))$ be the Hecke $L$-function (degree $2$) of the automorphic representation ${\mathcal {AI}}(\chi)$. By transcribing \cite[Theorem 1]{Tsud2019} in the language of Siegel modular forms, we have the following result.   
\begin{thm} \label{thm0}
Let $D<0$ be a fundamental discriminant and $\chi$ a character of ${\rm Cl}_{D}$. Then there exists a constant $C=C(D)>1$ (independent of $\chi$) such that as $l\in 2\N$ grows to infinity, \begin{align*}
\sum_{\Phi \in \cF_l} L_{\fin}(1/2,\pi_\Phi)\,\omega_{l,D,\chi^{-1}}^{\Phi}
=2\,P(l,D,\chi)+O(C^{-l})
\end{align*}
with  
\begin{align}
P(l,D,\chi)
&=\begin{cases}
L_\fin(1,\eta_D)\,(\psi(l-1)-\log(4\pi^2))+L'_\fin(1,\eta_D), \quad &(\chi={\bf 1}), \\ 
L_\fin(1,{\mathcal {AI}}(\chi)), \quad &(\chi\not={\bf 1}),
\end{cases}
\notag
\end{align}
where $\psi(s)=\Gamma'(s)/\Gamma(s)$ is the di-gamma function. 
\end{thm}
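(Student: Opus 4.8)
The plan is to obtain Theorem~\ref{thm0} as a specialization of \cite[Theorem~1]{Tsud2019}, which provides a first moment asymptotic, with an exponentially small error, for the central standard $L$-values of cusp forms on ${\rm SO}(2,m)$ weighted by squares of toric (Bessel) periods. For $m=3$ I would use the exceptional isomorphism of adjoint $\Q$-groups of type $C_2=B_2$, namely ${\bf G}={\bf PGSp}_2\cong {\bf SO}(Q)$ with $Q$ the split quadratic form of rank $5$; this is an honest isomorphism, so $\pi_\Phi$ is literally a cuspidal automorphic representation of ${\bf SO}(Q)(\A)$, and its archimedean component --- the holomorphic discrete series of scalar weight $l$ of ${\bf G}(\R)$ --- corresponds, under ${\bf G}(\R)\cong {\rm SO}(2,3)$, to a holomorphic discrete series whose Harish--Chandra parameter is linear in $l$; this is precisely the archimedean parameter sent to infinity in \cite{Tsud2019}, so the bound $O(C^{-l})$ is exactly the exponential error term there. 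Under the attendant identification of dual groups, the standard ($4$-dimensional) $L$-function on the ${\rm SO}(2,3)$-side is the degree $4$ spinor $L$-function $L(s,\pi_\Phi)$, and its archimedean factors are the $\Gamma_{\C}(s+1/2)\Gamma_{\C}(s+l-3/2)$ displayed above.

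Next I would set up the period dictionary. Fixing the imaginary quadratic field $\Q(\sqrt D)$, the anisotropic torus ${\bf SO}_2\subset {\bf SO}(Q)$ attached to $\Q(\sqrt D)$, together with its compatible unipotent subgroup, is what defines the toric period of \cite{Tsud2019}; transported to the ${\bf GSp}_2$-side it becomes the Bessel subgroup whose associated functional extracts the Fourier coefficients $A_\Phi(T)$ for $T$ of discriminant $D$. Regarding a character $\chi$ of ${\rm Cl}_D$ as an idele class character of $\Q(\sqrt D)$, the twist by $\chi$ then corresponds to the toric integral of $\tilde\Phi$ against $\chi$, which, via ${\bf SL}_2(\Z)\bsl \cQ_{\rm prim}^{+}(D)\cong {\rm Cl}_D$, is a nonzero multiple of $R(\Phi,D,\chi)$; the Bessel period sees $\chi$ only through its automorphic induction $\mathcal{AI}(\chi)=\mathcal{AI}(\chi^{\sigma})$, which is why the degree $2$ $L$-function $L_\fin(s,\mathcal{AI}(\chi))$ governs the main term, and why the pair $\chi^{\pm1}=\{\chi,\chi^{\sigma}\}$, distinct exactly when $\chi^2\neq{\bf 1}$, produces the factor $d_\chi=2$ built into $\omega^\Phi_{l,D,\chi}$.

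Next I would match the normalizing constant. The refined Gross--Prasad (B\"ocherer-type) identity underlying \cite{Tsud2019} writes $|R(\Phi,D,\chi)|^2/\|\Phi\|^2$ as a product of local Bessel integrals; at the finite places these are evaluated by Liu's formula for local Bessel periods of spherical vectors, and contribute $L_\fin(1/2,\pi_\Phi)\,L_\fin(1/2,\mathcal{AI}(\chi))$ up to values of $\zeta$ and of $L(\cdot,\eta_D)$ at $1$ and the elementary factor $\tfrac{4}{w_D h_D}$ (class number, units, and the volume of $[{\bf SO}_2]$). The archimedean integral is computed with the archimedean Shintani function of the scalar weight $l$ holomorphic discrete series, and this is where the gamma factors $\Gamma(l-3/2)\Gamma(l-2)$, the power $(4\pi)^{3-2l}$, and the factor $(|D|/4)^{3/2-l}$ occurring in $c_{l,D}$ originate, so that multiplying the classical normalization by $c_{l,D}$ reproduces exactly the adelic normalization of \cite[Theorem~1]{Tsud2019}. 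Carrying out this archimedean computation and keeping track of all powers of $2$, $\pi$ and $|D|$ and of the precise gamma-shifts is the step I expect to be the main obstacle, since essentially all the arithmetic content of the constants in Theorem~\ref{thm0} is concentrated there.

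Finally, inserting the matched data into \cite[Theorem~1]{Tsud2019} yields, when $\mathcal{AI}(\chi)$ is cuspidal (i.e.\ $\chi^2\neq{\bf 1}$), the main term $2\,L_\fin(1,\mathcal{AI}(\chi))$, and when $\chi^2={\bf 1}$ with $\chi\neq{\bf 1}$, writing $\mathcal{AI}(\chi)=\chi_0\boxplus\chi_0\eta_D$, the main term $2\,L_\fin(1,\chi_0)L_\fin(1,\chi_0\eta_D)=2\,L_\fin(1,\mathcal{AI}(\chi))$; thus for all $\chi\neq{\bf 1}$ the main term is $2\,L_\fin(1,\mathcal{AI}(\chi))$, the leading $2$ being the, now explicit, universal constant of \cite[Theorem~1]{Tsud2019}. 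When $\chi={\bf 1}$ one has $\mathcal{AI}({\bf 1})={\bf 1}\boxplus\eta_D$ and $L_\fin(s,\mathcal{AI}({\bf 1}))=\zeta_\fin(s)\,L_\fin(s,\eta_D)$ has a simple pole at $s=1$; the corresponding degenerate case of \cite[Theorem~1]{Tsud2019} expresses the main term through the first Laurent coefficients at $s=1$, with the $l$-dependent archimedean factor --- which equals $1$ at $s=1$ by the choice of $c_{l,D}$ --- contributing its logarithmic derivative, a shifted $\Gamma_{\C}$-type quantity with argument near $l$ that equals $\psi(l-1)-\log(4\pi^2)$, so that one gets $2\bigl(L_\fin(1,\eta_D)(\psi(l-1)-\log(4\pi^2))+L'_\fin(1,\eta_D)\bigr)$. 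Collecting the two cases $\chi={\bf 1}$ and $\chi\neq{\bf 1}$ gives the stated formula for $P(l,D,\chi)$, completing the deduction.
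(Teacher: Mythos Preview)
Your proposal is correct and follows essentially the same route as the paper: specialize \cite[Theorem~1]{Tsud2019} to $m=3$, transport it to ${\bf PGSp}_2$ via the exceptional isomorphism (matching the toric/Bessel period with $R(\Phi,D,\chi)$, the $L$-function $L_\fin(s,\cU_\chi)$ with $L_\fin(s,\mathcal{AI}(\chi))$, and the degree-$4$ standard $L$-function with the spinor $L$-function), then take the test function trivial so that $S=\emptyset$. The constant-matching you correctly flag as the main obstacle is exactly what the paper carries out in Lemma~\ref{L:BesselPer} and Corollary~\ref{cor:Oasympt}; one small caveat is that \cite{Tsud2019} does not invoke a refined Gross--Prasad identity for an individual $|R(\Phi,D,\chi)|^2/\|\Phi\|^2$ but is itself an averaged (relative-trace-formula-type) statement, so your third paragraph should be read as explaining the origin of the constants in that averaged formula rather than as an intermediate pointwise identity.
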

After recalling a basic setting for orthogonal groups in \S~\ref{sec:Othogonal}, we state the corresponding asymptotic formula for the orthogonal group in Corollary~\ref{cor:Oasympt}, from which Theorem~\ref{thm0} is easily deduced by the materials collected in \S~\ref{sect:Bookkeep}. Since $\psi(l-1)=\log l+O(l^{-1})$ as is well-known, Theorem~\ref{thm0} when specialized to the case $D=-4$ and $\chi={\bf 1}$ recovers the asymptotic formula stated in \cite[Theorem 1]{Blomer}. Note that our asymptotic formula has a much stronger error term $O(C^{-l})$ than $O(l^{-1})$ ({\it cf}. \cite[(1.8)]{Blomer}).

For each prime number $p$, we fix a Haar measure $\d g_p$ on ${\bf G}(\Q_p)$ such that $\vol({\bf G}(\Z_p))=1$. Let $\cH({\bf G}(\Q_p)\sslash {\bf G}(\Z_p))$ be the spherical Hecke algebra of ${\bf G}(\Q_p)$. For any function $\phi \in \cH({\bf G}(\Q_p)\sslash {\bf G}(\Z_p))$, let $\hat \phi:\fX_p\rightarrow \C$ denote the spherical Fourier transform of $\phi$, i.e, $\hat\phi(\nu)$ is the eigenvalue of $\pi_{p}^{\rm ur}(\nu)(\phi)=\int_{{\bf G}(\Q_p)}\phi(g_p)\pi_{p}^{\rm ur}(g_p)\d g_p$ on the ${\bf G}(\Z_p)$-fixed vectors of $\pi_p^{\rm ur}(\nu)$. Let $\d \mu_{p}^{\rm Pl}$ be the spherical Plancherel measure corresponding to $\d g_p$, i.e., a non-negative Radon measure on $\fX_p$ supported on the tempered locus $\fX_p^{0}=(\sqrt{-1}\R/2\pi \sqrt{-1}(\log p)^{-1}\Z)^2$ which fits in the inversion formula: $$
\int_{\fX_p^{0}}\hat \phi(\nu)\,\d\mu_{p}^{\rm Pl}(\nu)=\phi(1_4), \quad \phi \in\cH({\bf G}(\Q_p)\sslash {\bf G}(\Z_p)).
$$
Let $S$ be a finite set of prime numbers. For any $\alpha=\otimes_{p\in S}\alpha_p$ continuous function on $\fX_S=\prod_{p\in S}(\C/2\pi \sqrt{-1}(\log p)^{-1}\Z)^2$, define 
\begin{align*}
{\bf \Lambda}_S^{\chi}(\alpha):=\prod_{p\in S}\, \frac{\zeta_p(2)\zeta_p(4)}{
\zeta_p(1)L(1,{\mathcal {AI}}(\chi)_p)} 
\int_{\fX_p^{0}/W(C_2)} \frac{L\left(\frac{1}{2},\pi_p^{\rm ur}(\nu) \times {\mathcal {AI}}(\chi)_p\right)L\left(\frac{1}{2},\pi_p^{\rm ur}(\nu)\right)}{L(1,\pi_p^{\rm ur}(\nu),{\rm Ad})}\,\d \mu_{p}^{{\rm Pl}}(\nu)
\end{align*}
and $\mu_{S}^{\rm Pl}=\bigotimes_{p\in S}\mu_p^{\rm Pl}$, where $L(s,\pi_p^{\rm ur}(\nu)\times {\mathcal {AI}}(\chi)_p)$ is the local $p$-factor of the ${\bf GSp}_2 \times {\bf GL}_2$ convolution $L$-function (degree $8$) and $L(s,\pi_p^{\rm ur}(\nu),{\rm Ad})$ is the local $p$-factor of the adjoint $L$-function of ${\bf GSp}_2$ (degree $10$). Let $\fX_p^{0+}$ denote the set of $\nu \in \fX_p$ such that $\pi_p^{\rm ur}(\nu)$ is unitarizable. Note that $\fX_p^{0+}$ is a relatively compact subset of $\fX_p$ and $\fX_p^{0}\subset \fX_p^{0+}$. Since $\pi_\Phi$ with $\Phi\in \cF_l$ is a subrepresentation of $L^2({\bf G}(\Q)\bsl {\bf G}(\A))$, the local components $\pi_{\Phi,p}$ are unitarizable, which implies $\nu_p(\Phi)\in \fX_p^{0+}$ for all $p<\infty$. For a set $S$ of primes, let $\nu_S(\Phi)$ denote the element $\{\nu_p(\Phi)\}_{p\in S}$ of $\fX_S^{0+}:=\prod_{p\in S}\fX_p^{0+}$. Now we can state our main theorem as follows. 

\begin{thm} \label{MAINTHM}
Let $D<0$ be a fundamental discriminant and $\chi$ a character of ${\rm Cl}_D$. For $l\in 2\N$, let $\cF_{l}$ be a Hecke eigen basis of $S_l({\bf Sp}_2(\Z))$ and $\cF^{\#}_l$ the set of $\Phi\in \cF_l$ which is a Saito-Kurokawa lifting from elliptic cusp forms on ${\bf SL}_2(\Z)$. Set $\cF^{\flat}_l=\cF_l-\cF_l^{\#}$. Let $S$ be a finite set of odd prime numbers such that $p\not\in S$ for all prime $p|D$. Then for any $\alpha \in C(\fX_S^{0+}/W_S)$, as $l\in 2\N$ grows to infinity,   
\begin{align*}
&\frac{1}{(\log l)^{\delta(\chi={\bf 1})}}\,\sum_{\Phi \in \cF_l^{\flat}}\alpha(\nu_{S}(\Phi))\,{L_\fin(1/2,\pi_\Phi)}\,\omega^{\Phi}_{l,D,\chi^{-1}}\,
\rightarrow 2 {\bf \Lambda}_S^\chi (\alpha)\,\begin{cases} 
L_\fin(1,\eta_D), \quad (\chi={\bf 1}), \\ 
L_{\fin}(1,{\mathcal {AI}}(\chi)), \quad (\chi\not={\bf 1}),
\end{cases} \\
&\frac{1}{(\log l)^{\delta(\chi={\bf 1})}}\,\sum_{\Phi \in \cF_l^{\#}}\alpha(\nu_{S}(\Phi))\,{L_\fin(1/2,\pi_\Phi)}\,\omega^{\Phi}_{l,D,\chi^{-1}}\,
\rightarrow 0.
\end{align*}
\end{thm}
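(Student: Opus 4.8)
The plan is to deduce Theorem~\ref{MAINTHM} from Theorem~\ref{thm0} — equivalently, from its orthogonal-group incarnation Corollary~\ref{cor:Oasympt}, which is the case $\alpha\equiv1$ of Theorem~\ref{MAINTHM} — by weighting the period sum with a spherical Hecke operator at the places of $S$ and then appealing to Stone--Weierstrass. First I would treat the test functions $\alpha=\hat\phi$ with $\phi=\bigotimes_{p\in S}\phi_p$, $\phi_p\in\cH({\bf G}(\Q_p)\sslash{\bf G}(\Z_p))$. Writing $T_\phi$ for the associated Hecke operator on $S_l({\bf Sp}_2(\Z))$, one has $T_\phi\Phi=\hat\phi(\nu_S(\Phi))\,\Phi$ for $\Phi\in\cF_l$, and since $R(\Phi,D,\chi^{-1})$ is linear in $\Phi$ this gives $R(T_\phi\Phi,D,\chi^{-1})=\hat\phi(\nu_S(\Phi))\,R(\Phi,D,\chi^{-1})$. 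Hence the weighted sum $\sum_{\Phi\in\cF_l}\hat\phi(\nu_S(\Phi))\,L_\fin(1/2,\pi_\Phi)\,\omega^{\Phi}_{l,D,\chi^{-1}}$ is computed by the same reproducing-kernel (relative-trace) identity of weight $l$ that underlies the proof of \cite[Theorem~1]{Tsud2019}, only with the reproducing kernel convolved with $\phi$ in one variable: on the spectral side this inserts the eigenvalue $\hat\phi(\nu_S(\Phi))$, and on the geometric side it multiplies the $S$-local \emph{unramified} Bessel integral — unramified precisely because $S$ consists of odd primes not dividing $D$, so the data attached to $(D,\chi)$ is unramified at each $p\in S$ — by the factor $\hat\phi_p$, while leaving the archimedean factor (the digamma-type contribution produced by the archimedean Shintani function), the places outside $S$, and the $O(C^{-l})$ estimate for the non-central geometric terms unchanged for fixed supports of the $\phi_p$. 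Evaluating the $S$-local integral by Liu's formula, I therefore expect, for each fixed $\phi$,
\begin{align*}
\sum_{\Phi\in\cF_l}\hat\phi(\nu_S(\Phi))\,L_\fin(1/2,\pi_\Phi)\,\omega^{\Phi}_{l,D,\chi^{-1}}=2\,{\bf\Lambda}_S^{\chi}(\hat\phi)\,P(l,D,\chi)+O_\phi(C^{-l}),
\end{align*}
where ${\bf\Lambda}_S^{\chi}(\hat\phi)$ is as defined in \S~\ref{sec:Intro} (this is meaningful since $\hat\phi=\bigotimes_{p\in S}\hat\phi_p$ is a product function); for $\phi$ the unit of the Hecke algebra one has $\hat\phi\equiv1$ and ${\bf\Lambda}_S^{\chi}(1)=1$ — the latter being the local Plancherel identity that renders Theorem~\ref{thm0} independent of $S$ — so the display reduces to Theorem~\ref{thm0}.

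Next I would dispose of the Saito--Kurokawa forms. For $\Phi\in\cF^{\#}_l$, the lift of some $f\in S_{2l-2}({\bf SL}_2(\Z))$, the Fourier coefficient $A_\Phi(T)$ at a primitive $T$ of discriminant $D$ depends only on $D$, so $R(\Phi,D,\chi^{-1})$ is a fixed multiple of $\sum_{[\fa]\in{\rm Cl}_D}\chi^{-1}([\fa])$; this vanishes unless $\chi={\bf 1}$, and when $\chi={\bf 1}$ it equals $h_D$ times the pertinent Fourier coefficient of the half-integral weight Shimura correspondent of $f$. Feeding in the known formulas expressing this coefficient and $\|\Phi\|^2$ through $L$-values of $f$ (Waldspurger--Kohnen--Zagier and Katsurada--Kawamura), the convexity bound, the explicit shape of $c_{l,D}$, Stirling's formula, and $\#\cF^{\#}_l\ll l$, one obtains $\sum_{\Phi\in\cF^{\#}_l}\alpha(\nu_S(\Phi))\,L_\fin(1/2,\pi_\Phi)\,\omega^{\Phi}_{l,D,\chi^{-1}}=O_{\alpha,D}(C^{-l})$ for every bounded $\alpha$; in particular this is $o((\log l)^{\delta(\chi={\bf 1})})$, which is the second assertion of the theorem, and it also permits replacing $\sum_{\Phi\in\cF_l}$ by $\sum_{\Phi\in\cF^{\flat}_l}$ in the display above at the cost of an $O(C^{-l})$.

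Finally, to reach an arbitrary $\alpha\in C(\fX_S^{0+}/W_S)$ I would invoke Stone--Weierstrass: as $\phi$ runs over $\bigl(\bigotimes_{p\in S}\cH({\bf G}(\Q_p)\sslash{\bf G}(\Z_p))\bigr)\otimes\C$, the functions $\hat\phi$ run, via the Satake isomorphism, over all $W_S$-invariant Laurent polynomials in the $p^{\pm\nu_{j,p}}$, which form a $\C$-subalgebra of $C(\fX_S^{0+}/W_S)$ that contains the constants, separates points, and is closed under complex conjugation (for real $\phi$, $\hat\phi$ is real-valued on $\fX_p^{0+}$ since every unitarizable $\pi_p^{\rm ur}(\nu)$ is self-dual, hence self-conjugate); by Stone--Weierstrass it is dense. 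Given $\alpha$ and $\e>0$, pick $\phi$ with $\|\alpha-\hat\phi\|_\infty<\e$; the first step applied to $\hat\phi$, together with $L_\fin(1/2,\pi_\Phi)\ge0$ for $\Phi\in\cF^{\flat}_l$ (B\"ocherer's conjecture, as proved by Furusawa--Morimoto; at level $1$ the forms in $\cF^{\flat}_l$ are all of general type, no Yoshida lifts existing) and $\omega^{\Phi}_{l,D,\chi^{-1}}\ge0$ — whence the uniform bound $(\log l)^{-\delta(\chi={\bf 1})}\sum_{\Phi\in\cF^{\flat}_l}L_\fin(1/2,\pi_\Phi)\,\omega^{\Phi}_{l,D,\chi^{-1}}\ll_D 1$ coming from the case $\alpha\equiv1$ — and the continuity of the positive linear functional ${\bf\Lambda}_S^{\chi}$ on $C(\fX_S^{0+}/W_S)$, a routine approximation argument then shows that the left-hand side of Theorem~\ref{MAINTHM} converges to $2\,{\bf\Lambda}_S^{\chi}(\alpha)\,L_\fin(1,\eta_D)$ when $\chi={\bf 1}$ and to $2\,{\bf\Lambda}_S^{\chi}(\alpha)\,L_\fin(1,{\mathcal{AI}}(\chi))$ when $\chi\ne{\bf 1}$.

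The main difficulty is the first step: one must verify that the period-formula machinery of \cite{Tsud2019} carries an arbitrary spherical Hecke operator at $S$ with no loss, and in particular that its $S$-local contribution to the main term is the \emph{full} spherical Bessel distribution — Liu's unramified integral against $\hat\phi_p\,\d\mu_p^{\rm Pl}$ — rather than merely its value at the spherical vector, since it is here that the denominator $L(1,\pi_p^{\rm ur}(\nu),{\rm Ad})$ and the Plancherel measure combine into the Sato--Tate-type weight of ${\bf\Lambda}_S^{\chi}$. The Saito--Kurokawa estimate, the $l$-uniformity of the error for fixed $\phi$, and the Stone--Weierstrass step are comparatively routine.
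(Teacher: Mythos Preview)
Your approach is the paper's approach: the Hecke-weighted asymptotic (your ``first step'') is exactly Corollary~\ref{cor:Oasympt}/Proposition~\ref{GSPasympt}, obtained by directly invoking \cite[Theorems~1.1--1.2]{Tsud2019}, which already carry an arbitrary spherical Hecke function at $S$; no new work is required there beyond the translation to ${\bf GSp}_2$ via the exceptional isomorphism (Proposition~\ref{SpO-Isom}). Your displayed main term is slightly off for $\chi={\bf 1}$ --- Corollary~\ref{cor:Oasympt} has an extra bounded-in-$l$ summand $L'_\fin(1,\eta_D)\,\tfrac{\d}{\d s}\big|_{s=0}{\bf\Lambda}^{\xi_D,{\bf 1}}(s;\hat\phi_S)$ --- but this vanishes after division by $\log l$, so the limit is as you state. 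The Saito--Kurokawa and Stone--Weierstrass steps are then handled exactly as you outline, the paper citing \cite[Proposition~5.8]{KST} and \cite[\S5.2]{Tsud2019} respectively.

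Two corrections. Your claimed bound $O_{\alpha,D}(C^{-l})$ for the Saito--Kurokawa contribution is too strong: the quoted formula for $\omega^{{\rm SK}(f)}_{l,D,{\bf 1}}$ together with the trivial bound $|L'_\fin(1/2,f)|\ll l^{1/2+\epsilon}$ gives only a power saving in $l$, which is what Lemma~\ref{SKnegligible} asserts and is all that is needed. And the nonnegativity $L_\fin(1/2,\pi_\Phi)\ge0$ for $\Phi\in\cF_l^\flat$ is not a consequence of B\"ocherer/Furusawa--Morimoto (their formula controls the \emph{product} $L_\fin(1/2,\pi_\Phi)L_\fin(1/2,\pi_\Phi\times\eta_D)$, not each factor); the paper instead invokes \cite[Theorem~5.2.4]{PSS}, which rests on Lapid--Rallis and the generic transfer.
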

We note that the proof of this theorem requires the non-negativity $L_{\fin}(1/2,\pi_\Phi)\geq 0$ ($\forall \Phi\in \cF_l^{\flat}$), which is known (\cite[Theorem 5.2.4]{PSS}, \cite{Lapid}, \cite{Weissauer}). 
\begin{cor}\label{MAINTHM2} Let $D<0$ be a fundamental discriminant and $S$ a finite set of add prime numbers such that $p\in S$ is relatively prime to $D$. Let $\chi$ be a character of ${\rm Cl}_{D}$. Given a Riemann integrable subset $U$ of $\fX_S^{0}/W_S$ such that $\mu_{S}^{\rm Pl}(U)>0$, there exists $l_0\in \N$ with the following property: for any even integer $l>l_0$ there exists $\Phi \in \cF_l^{\flat}$ such that 
\begin{itemize}
\item[(i)] $L_{\fin}(1/2,\pi_\Phi)>0$, 
\item[(ii)] $R(\Phi,D,\chi)\not=0$,
\item[(iii)] $\nu_{S}(\Phi)\in U$. 
\end{itemize}
\end{cor}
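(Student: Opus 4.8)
The plan is to deduce the corollary from Theorem~\ref{MAINTHM} by feeding in a non-negative test function dominated by the indicator $\mathbf{1}_U$ of $U$, and then using that a sum of non-negative reals converging to a strictly positive limit must contain a strictly positive term. Concretely, suppose we have a pure tensor $\alpha=\otimes_{p\in S}\alpha_p\in C(\fX_S^{0+}/W_S)$ with $0\le\alpha\le\mathbf{1}_U$ for which the limiting constant $c:=2\,{\bf \Lambda}_S^{\chi}(\alpha)\,L_{\fin}(1,\eta_D)$ (if $\chi={\bf 1}$), resp.\ $c:=2\,{\bf \Lambda}_S^{\chi}(\alpha)\,L_{\fin}(1,\mathcal{AI}(\chi))$ (if $\chi\neq{\bf 1}$), is strictly positive. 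Theorem~\ref{MAINTHM} gives $(\log l)^{-\delta(\chi={\bf 1})}\sum_{\Phi\in\cF_l^{\flat}}\alpha(\nu_S(\Phi))\,L_{\fin}(1/2,\pi_\Phi)\,\omega^{\Phi}_{l,D,\chi^{-1}}\to c>0$, and every summand is non-negative, since $L_{\fin}(1/2,\pi_\Phi)\ge 0$ for $\Phi\in\cF_l^{\flat}$ (recalled after Theorem~\ref{MAINTHM}), $\omega^{\Phi}_{l,D,\chi^{-1}}=c_{l,D}\,d_{\chi^{-1}}\,|R(\Phi,D,\chi)|^{2}/\|\Phi\|^{2}\ge 0$, and $\alpha\ge 0$. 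Hence there is $l_0$ such that for every even $l>l_0$ the sum exceeds $\tfrac{c}{2}(\log l)^{\delta(\chi={\bf 1})}>0$, so some $\Phi\in\cF_l^{\flat}$ has $\alpha(\nu_S(\Phi))\,L_{\fin}(1/2,\pi_\Phi)\,\omega^{\Phi}_{l,D,\chi^{-1}}>0$; for this $\Phi$ we then read off (i) from $L_{\fin}(1/2,\pi_\Phi)>0$, (ii) from $\omega^{\Phi}_{l,D,\chi^{-1}}>0\Rightarrow R(\Phi,D,\chi)\neq 0$, and (iii) from $\alpha(\nu_S(\Phi))>0$ together with $\alpha\le\mathbf{1}_U$, which forces $\nu_S(\Phi)\in U$.

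To construct $\alpha$, note that Riemann integrability of $U$ with respect to $\mu_S^{\rm Pl}$ means $\partial U$ is $\mu_S^{\rm Pl}$-null, so the relative interior $U^{\circ}$ of $U$ in $\fX_S^{0}/W_S$ has $\mu_S^{\rm Pl}(U^{\circ})=\mu_S^{\rm Pl}(U)>0$, and is in particular non-empty; moreover, off a $\mu_S^{\rm Pl}$-null set (contained in the walls) the tempered locus $\fX_S^{0}/W_S$ lies in the interior of the unitarizable locus $\fX_S^{0+}/W_S$, because a generic unramified tempered spherical representation has no nearby non-tempered unitarizable deformation. Hence $\mathrm{int}_{\fX_S^{0+}/W_S}(U)$ is a non-empty open set of positive $\mu_S^{\rm Pl}$-mass; choosing inside it a non-empty open product box $W=\prod_{p\in S}W_p$ and taking $\alpha_p$ a continuous bump with $0\le\alpha_p\le 1$ supported in $W_p$ and positive on a sub-box, we obtain $0\le\alpha\le\mathbf{1}_U$ with $\mathrm{supp}\,\alpha$ inside the tempered locus. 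Since $W_p\subseteq\fX_p^{0}/W(C_2)$, the product formula for ${\bf \Lambda}_S^{\chi}(\alpha)$ expresses it as the integral of $\alpha$ against $\mu_S^{\rm Pl}$ weighted by a strictly positive continuous density: for each $p\in S$ (odd, prime to $D$) the Euler-factor constant $\zeta_p(2)\zeta_p(4)\zeta_p(1)^{-1}L(1,\mathcal{AI}(\chi)_p)^{-1}$ is a positive real, and for $\nu\in\fX_p^{0}$ the Satake parameters $\alpha_j=p^{-\nu_j}$ have absolute value $1$, so the multisets of Frobenius parameters of the spinor, of the ${\bf GSp}_2\times{\bf GL}_2$-convolution, and of the adjoint representation are stable under complex conjugation; consequently each of $L(1/2,\pi_p^{\rm ur}(\nu)\times\mathcal{AI}(\chi)_p)$, $L(1/2,\pi_p^{\rm ur}(\nu))$ and $L(1,\pi_p^{\rm ur}(\nu),{\rm Ad})$ is a finite product of factors of the form $|1-\beta p^{-s}|^{-2}$ and $(1-p^{-s})^{-1}$ at a real $s\in\{1/2,1\}$, hence a positive real. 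As $\mu_p^{\rm Pl}$ has full support $\fX_p^{0}$ and $\alpha_p\ge 0$ is positive on an open set, each factor of the product is strictly positive, so ${\bf \Lambda}_S^{\chi}(\alpha)>0$.

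It remains to see that the $L$-value at $s=1$ entering $c$ is positive. For $\chi={\bf 1}$ the class number formula gives $L_{\fin}(1,\eta_D)=2\pi h_D\,(w_D\sqrt{|D|})^{-1}>0$, so $c>0$. For $\chi\neq{\bf 1}$ the number $L_{\fin}(1,\mathcal{AI}(\chi))$ is a priori complex, but since $\chi^{\sigma}=\chi^{-1}$ we have $\mathcal{AI}(\chi)\cong\mathcal{AI}(\chi^{-1})$, whence $L_{\fin}(1,\mathcal{AI}(\chi))=\overline{L_{\fin}(1,\mathcal{AI}(\chi))}$ is real; it is non-zero by the non-vanishing of Hecke $L$-functions on the line $\Re s=1$. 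Since Theorem~\ref{MAINTHM} applied to the non-negative $\alpha$ above exhibits $c$ as a limit of sums of non-negative reals, $c\ge 0$; combined with ${\bf \Lambda}_S^{\chi}(\alpha)>0$ this forces $L_{\fin}(1,\mathcal{AI}(\chi))>0$, hence $c>0$. I expect the main obstacle to be precisely this positivity of $c$: checking that the local spinor, convolution and adjoint $L$-factors remain positive throughout the tempered support of the Plancherel measure, and, for $\chi\neq{\bf 1}$, extracting strict positivity of $L_{\fin}(1,\mathcal{AI}(\chi))$ from its reality together with the non-negativity already guaranteed by Theorem~\ref{MAINTHM}; the rest is the elementary observation that a sum of non-negative terms with a strictly positive limit has a strictly positive term, plus bookkeeping around $U$ and the support of $\alpha$.
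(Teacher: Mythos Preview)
Your proposal is correct and follows essentially the same route as the paper: the paper's own proof is a single sentence (``approximate the characteristic function by a continuous function'' together with $L_\fin(1,\eta_D)\neq 0$ and $L_\fin(1,\mathcal{AI}(\chi))\neq 0$), and you have supplied the details that make this work, including the positivity of the local $L$-factor density on the tempered support of $\mu_S^{\rm Pl}$ and a clean extraction of $L_\fin(1,\mathcal{AI}(\chi))>0$ from its reality and the non-negativity of the approximating sums. The only step one might streamline is the topological claim that generic tempered points lie in the interior of $\fX_S^{0+}/W_S$: an alternative is to note that for $\Phi\in\cF_l^{\flat}$ the local components $\pi_{\Phi,p}$ are tempered (Weissauer), so $\nu_S(\Phi)\in\fX_S^{0}$, and then one only needs $0\le\alpha|_{\fX_S^{0}}\le\mathbf{1}_U$ together with $\alpha\ge 0$ on $\fX_S^{0+}$, which any non-negative continuous extension of a bump on $\fX_S^{0}/W_S$ provides.
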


At this point, we should recall a conjecture by Dickson-Pitale-Saha-Schmidt (\cite{DPSS}), which is a generalization of B\"{o}chere's conjeture(\cite{Boechere}) and is deduced from a version of the refined Gan-Gross-Prasad conjecture posed by Y.Liu (\cite{Liu}): 

\medskip
\noindent
{\bf Conjecture (\cite[Conjecture 1.3]{DPSS})} : Let $l>2$ be an even integer and $\Phi\in S_l({\bf Sp}_2(\Z))$ is a joint eigenfunction of all the Hecke operators. Suppose that $\Phi$ is not the Saito-Kurokawa lifting from an elliptic cusp form on ${\bf SL}_2(\Z)$. Then for any fundamental discriminant $D<0$ and for any character $\chi$ of ${\rm Cl}_{D}$, 
\begin{align}
\frac{|R(\Phi,D,\chi^{-1})|^2}{\|\Phi\|^2 }
=\frac{2^{4l-4}\pi^{2l+1}}{(2l-2)!}w_D^2|D|^{l-1}\frac{L_\fin(1/2,\pi_\Phi \times {\mathcal {AI}}(\chi))}{L_\fin(1,\pi_\Phi,{\rm Ad})}.
 \label{GGPPeriod}
\end{align}

\smallskip
\noindent
Note that the analytical prperties of $L$-functions appering in the formula are fully studied in \cite{PSS}: in particular, it is proved that both the degree $8$ $L$-function $L(s,\pi_{\Phi}\times {\mathcal {AI}}(\chi))$ and the degree $10$ $L$-function $L(s,\pi_{\Phi};{\rm Ad})$ are entire and that $L_\fin(1,\pi_\Phi,{\rm Ad})\not=0$ (\cite[Theorem 4.1.1, Theorem 5.2.1]{PSS}). Conditionally upon this conjecture, given $U$ and $\chi$ as above, Corollary~\ref{MAINTHM2} yields an infinite family of Siegel modular forms $\Phi \in S_l({\bf Sp}_2(\Z))$ with growing weights such that 
\begin{align*}
\text{$L_{\fin}(1/2,\pi_\Phi)\,L_\fin(1/2,\pi_\Phi \times {\mathcal {AI}}(\chi))\not=0$ and $\nu_{S}(\Phi)\in U$.}
\end{align*}   

The validity of the conjecture when $\chi$ is trivial is proved by Furusawa-Morimoto (\cite{FurusawaMorimoto}): 
\begin{thm}$(${\rm Furusawa-Morimoto} \cite[Theorem 2]{FurusawaMorimoto}$)$
 Let $\Phi\in S_{l}({\bf Sp}_2(\Z))$ with an evem $l>2$ is a joint eigenfunction of all the Hecke operators on ${\rm Sp}_2(\Z)$. Suppose that $\Phi$ is not a Saito-Kurokawa lift. For any negative fundamental discriminant $D$, when $\chi$ is the trivial character of ${\rm Cl}_{D}$, the equality \eqref{GGPPeriod} is true.  \end{thm}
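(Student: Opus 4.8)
The plan is to establish the relevant case of the refined Gan--Gross--Prasad conjecture for \emph{special Bessel periods}, in the formulation of Liu \cite{Liu} and of Dickson--Pitale--Saha--Schmidt \cite{DPSS}, attached to the pair $({\rm SO}(2,3),{\rm SO}(2))$ --- recall that ${\bf PGSp}_2\cong{\rm SO}(2,3)$ --- and to deduce \eqref{GGPPeriod} from it through the classical--adelic dictionary. First one replaces the classical sum $R(\Phi,D,{\bf 1})$ of Fourier coefficients by the adelic Bessel period $\mathcal{B}_\Lambda(\tilde\Phi)=\int_{\mathsf{R}(\Q)\backslash\mathsf{R}(\A)}\tilde\Phi(r)\,\Lambda^{-1}(r)\,dr$ of the adelic lift $\tilde\Phi$ generating $\pi_\Phi$, where $\mathsf{R}$ is the Bessel subgroup --- the semidirect product of the anisotropic torus $T\cong{\rm SO}(2)$ attached to $\Q(\sqrt D)$ (whose idele class group is ${\rm Cl}_D$) with a unipotent group carrying a generic character --- and $\Lambda|_T$ corresponds to $\chi$; this translation, together with the conversion of $\|\Phi\|^2$ into the adelic Petersson norm $\langle\tilde\Phi,\tilde\Phi\rangle$, is the content of \S\ref{sect:Bookkeep}. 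The theorem then becomes the exact local--global identity
\begin{align*}
\frac{|\mathcal{B}_\Lambda(\tilde\Phi)|^2}{\langle\tilde\Phi,\tilde\Phi\rangle}=C\cdot\frac{L\!\left(\tfrac12,\pi_\Phi\times{\mathcal{AI}}(\chi)\right)}{L(1,\pi_\Phi,{\rm Ad})}\cdot\beta_\infty\cdot\prod_{p\mid D}\beta_p,
\end{align*}
where the $L$-functions are completed, $C$ is the global Tamagawa/$\zeta$-value constant of the Ichino--Ikeda normalization, $\beta_v$ is the normalized local Bessel integral at $v$, and $\beta_v=1$ for every finite $v\nmid D$ since $\Phi$ has level ${\bf Sp}_2(\Z)$ and so $\pi_\Phi$ is spherical at every finite place.

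Second, one proves this identity via Furusawa's Rankin--Selberg integral for ${\bf GSp}_2\times{\bf GL}_2$: one integrates $\tilde\Phi$ against the restriction to ${\bf GSp}_2$ of a Siegel Eisenstein series $E(\cdot,s;f)$ on ${\rm GU}(2,2)(\A)$ --- the unitary similitude group of a Hermitian space over $\Q(\sqrt D)$ into which ${\bf GSp}_2$ embeds --- attached to a section $f=\otimes_v f_v$ whose inducing data is built from $\chi$; unfolding against the Bessel--Fourier coefficient of $\tilde\Phi$ produces an Euler product $Z(s)=\prod_v Z_v(s)$ with $Z_v(s)=L(s+\tfrac12,\pi_{\Phi,v}\times{\mathcal{AI}}(\chi)_v)\,b_v(s)^{-1}$ for $v\nmid D\infty$ (Furusawa's unramified computation, with $b_v$ an elementary product of local $\zeta$- and $\eta_D$-factors). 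Taking $f_p$ spherical for $p\nmid D$, a distinguished section for $p\mid D$, and $f_\infty$ the weight-$l$ vector matched to the scalar lowest-weight vector of $\pi_{\Phi,\infty}$, and then running the Rallis-type pull-back/inner-product argument that produces $|\mathcal{B}_\Lambda(\tilde\Phi)|^2$ on the spectral side, one arrives at the displayed identity with $\beta_\infty$ and $\beta_p$ $(p\mid D)$ the corresponding local Bessel integrals. A point requiring extra care --- and special to the case $\chi={\bf 1}$ treated here --- is that ${\mathcal{AI}}(\chi)={\bf 1}\boxplus\eta_D$ is \emph{non-cuspidal}, so the ${\bf GL}_2$-input to the integral is itself an Eisenstein series: one must regularize the period integrals, verify that the poles of the two Eisenstein series contribute no spurious terms, and check that the Euler factorization survives --- or else reach the $\chi={\bf 1}$ identity by continuity from the cuspidal parameters.

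The genuinely difficult step, and the main obstacle, is the archimedean evaluation of
\begin{align*}
\beta_\infty=\frac{L(1,\pi_{\Phi,\infty},{\rm Ad})}{L\!\left(\tfrac12,\pi_{\Phi,\infty}\times{\mathcal{AI}}(\chi)_\infty\right)}\int_{\mathsf{R}(\R)}\frac{\langle\pi_{\Phi,\infty}(r)v_\infty,v_\infty\rangle}{\langle v_\infty,v_\infty\rangle}\,\Lambda_\infty^{-1}(r)\,dr,
\end{align*}
where $v_\infty$ is the scalar weight-$l$ lowest-weight vector of the holomorphic discrete series of ${\bf Sp}_2(\R)$ and $\Lambda_\infty$ is the character of the compact torus $T(\R)$. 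This calls for an explicit realization of that discrete series and the evaluation of the (regularized) integral of one of its matrix coefficients --- naturally expressible through confluent hypergeometric and Bessel functions --- over the real points of the Bessel subgroup. The remaining factor $\beta_p$ at $p\mid D$, where $\pi_{\Phi,p}$ is spherical but ${\mathcal{AI}}(\chi)_p$ and the quadratic lattice at $p$ are ramified, is a finite and explicit $p$-adic Bessel integral. Assembled with the global constant $C$, the factors $\beta_\infty$ and $\beta_p$ $(p\mid D)$, the archimedean Euler factors of $L(s,\pi_\Phi\times{\mathcal{AI}}(\chi))$ and $L(s,\pi_\Phi,{\rm Ad})$ (products of $\Gamma_\C$-factors such as $\Gamma_\C(s+\tfrac12)$ and $\Gamma_\C(s+l-\tfrac32)$) at $s=\tfrac12$ and $s=1$, and the dictionary of \S\ref{sect:Bookkeep}, these reconstitute the explicit constant $\frac{2^{4l-4}\pi^{2l+1}}{(2l-2)!}\,w_D^2\,|D|^{l-1}$ of \eqref{GGPPeriod}. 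Everything beyond the archimedean integral and the regularization needed in the non-cuspidal case is, though lengthy, routine once these local models are in hand.
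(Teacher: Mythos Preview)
The paper does not prove this theorem at all: it is stated as a citation of an external result, namely \cite[Theorem~2]{FurusawaMorimoto}, and is invoked only as a black box to obtain the corollary that follows. There is therefore no ``paper's own proof'' to compare your proposal against.

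What you have written is a plausible high-level road map toward the Furusawa--Morimoto theorem itself, but it is a sketch of somebody else's paper rather than of anything in this one. If you intend it as a summary of \cite{FurusawaMorimoto}, note that their actual argument is organized somewhat differently from your outline: it proceeds via the theta correspondence between ${\rm SO}(2n+1)$ and ${\rm Mp}_{2n}$ (here $n=2$) together with the Rallis inner-product formula and Lapid--Rallis doubling computations, rather than by directly unfolding Furusawa's ${\bf GSp}_2\times{\bf GL}_2$ integral against an Eisenstein series on ${\rm GU}(2,2)$ as you describe. Your second paragraph thus misidentifies the global machinery. The local issues you flag---the archimedean Bessel integral for the holomorphic discrete series and the handling of the non-cuspidal ${\mathcal{AI}}({\bf 1})={\bf 1}\boxplus\eta_D$---are genuine, but in the present paper they are simply absorbed into the citation.
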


Invoking this, we have the following result unconditionally. 
\begin{cor}
Let $D<0$ be a fundamental discriminant and $S$ a finite set of add prime numbers such that $p\in S$ is relatively prime to $D$. Let $\chi$ be a character of ${\rm Cl}_{D}$. Given a Riemann integrable subset $U$ of $\fX_S^{0}/W_S$ such that $\mu_{S}^{\rm Pl}(U)>0$, there exists $l_0\in \N$ with the following property: for any even integer $l>l_0$ there exists $\Phi \in \cF_l^{\flat}$ such that \begin{itemize}
\item[(i)] $L_{\fin}(1/2,\pi_\Phi)\,L_\fin(1/2,\pi_\Phi \times \eta_D)>0$, 
\item[(ii)] $\nu_{S}(\Phi)\in U$. 
\end{itemize}
\end{cor}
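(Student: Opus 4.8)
The plan is to deduce this final corollary directly from Corollary~\ref{MAINTHM2} by specializing the formula of the Furusawa--Morimoto theorem to the trivial character and rewriting the conclusion in terms of the convolution $L$-value attached to $\eta_D$. First I would recall that $\mathcal{AI}(\mathbf 1)$ for the trivial character $\mathbf 1$ of ${\rm Cl}_D$ is the isobaric sum $\mathbf 1 \boxplus \eta_D$, since $\mathbf 1 = \mathbf 1_0 \circ \nr_{\Q(\sqrt D)/\Q}$ with $\mathbf 1_0$ the trivial Hecke character of $\Q$; consequently the degree $8$ $L$-function factors as
\begin{align*}
L_\fin(s,\pi_\Phi \times \mathcal{AI}(\mathbf 1)) = L_\fin(s,\pi_\Phi)\,L_\fin(s,\pi_\Phi \times \eta_D),
\end{align*}
because the convolution with the isobaric sum is the product of the convolutions. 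Thus the Furusawa--Morimoto identity \eqref{GGPPeriod} with $\chi = \mathbf 1$ reads
\begin{align*}
\frac{|R(\Phi,D,\mathbf 1)|^2}{\|\Phi\|^2} = \frac{2^{4l-4}\pi^{2l+1}}{(2l-2)!}\,w_D^2\,|D|^{l-1}\,\frac{L_\fin(1/2,\pi_\Phi)\,L_\fin(1/2,\pi_\Phi \times \eta_D)}{L_\fin(1,\pi_\Phi,{\rm Ad})}
\end{align*}
for every Hecke eigenform $\Phi\in \cF_l^{\flat}$ with $l>2$.

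Next I would invoke Corollary~\ref{MAINTHM2} with the character $\chi = \mathbf 1$: given the Riemann integrable set $U\subset \fX_S^0/W_S$ with $\mu_S^{\rm Pl}(U)>0$, there exists $l_0$ such that for every even $l>l_0$ one finds $\Phi\in \cF_l^{\flat}$ with $L_\fin(1/2,\pi_\Phi)>0$, $R(\Phi,D,\mathbf 1)\neq 0$, and $\nu_S(\Phi)\in U$. It remains only to read off the sign of $L_\fin(1/2,\pi_\Phi \times \eta_D)$. Since $R(\Phi,D,\mathbf 1)\neq 0$, the left-hand side of the displayed Furusawa--Morimoto identity is strictly positive; since $L_\fin(1,\pi_\Phi,{\rm Ad})>0$ by \cite[Theorem 5.2.1]{PSS} (recalled after the statement of the Conjecture), and since the explicit archimedean constant $2^{4l-4}\pi^{2l+1}w_D^2|D|^{l-1}/(2l-2)!$ is positive, we conclude $L_\fin(1/2,\pi_\Phi)\,L_\fin(1/2,\pi_\Phi \times \eta_D)>0$. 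Combined with $L_\fin(1/2,\pi_\Phi)>0$ this gives $L_\fin(1/2,\pi_\Phi \times \eta_D)>0$ as well, so in particular item~(i) of the corollary holds; item~(ii) is immediate from $\nu_S(\Phi)\in U$. Taking $l_0$ as provided by Corollary~\ref{MAINTHM2} finishes the argument.

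There is no real obstacle here beyond bookkeeping: the two substantive inputs — the weighted equidistribution statement producing $\Phi$ with the prescribed Satake data and nonvanishing $R$-period (Corollary~\ref{MAINTHM2}), and the period formula of Furusawa--Morimoto converting the nonvanishing of $R(\Phi,D,\mathbf 1)$ into the nonvanishing of the $L$-value product — are both already available and unconditional. The only point requiring a line of care is the factorization of the degree $8$ $L$-function along the isobaric decomposition $\mathcal{AI}(\mathbf 1) = \mathbf 1\boxplus \eta_D$, which is standard for Rankin--Selberg convolutions with isobaric automorphic representations; one should note in passing that this is precisely the case where $\mathcal{AI}(\mathbf 1)$ fails to be cuspidal, so the notation $L_\fin(1/2,\pi_\Phi\times\mathcal{AI}(\chi))$ in \eqref{GGPPeriod} is understood as this product. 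Everything else is a direct specialization.
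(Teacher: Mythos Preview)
Your proposal is correct and follows exactly the route the paper indicates: specialize Corollary~\ref{MAINTHM2} to $\chi=\mathbf 1$, then invoke the Furusawa--Morimoto theorem to convert $R(\Phi,D,\mathbf 1)\neq 0$ into positivity of $L_\fin(1/2,\pi_\Phi)\,L_\fin(1/2,\pi_\Phi\times\eta_D)$ via the factorization $L_\fin(s,\pi_\Phi\times\mathcal{AI}(\mathbf 1))=L_\fin(s,\pi_\Phi)\,L_\fin(s,\pi_\Phi\times\eta_D)$. The paper leaves the details implicit (it simply says ``Invoking this, we have the following result unconditionally''), and your write-up supplies them faithfully; the only nuance you add beyond the paper's sketch is the explicit justification that the product is strictly positive (not merely nonzero), which follows as you say from the positivity of $|R|^2/\|\Phi\|^2$, of the archimedean constant, and of $L_\fin(1,\pi_\Phi,\mathrm{Ad})$.
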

We should remark that when $S=\emp$, this corollary also follows from \cite[Theorem 3.15]{DPSS}.

\section{Preliminaries}
In this section we recall well-known facts on automorphic forms on the anisotropic orthogonal group of degree $2$ in the framework of \cite{MS98}.

\subsection{A general setting}
Let $(V_1,Q_1)$ be a non-degenerate quadratic space over $\Q$ such that $\dim(V_1)=m$ and $V_1$ is isotropic. Let $\cL_1$ be a maximal integral lattice in $(V_1,Q_1)$, i.e., $2^{-1}Q_1(\cL_1)\subset \Z$ and if $\cM$ is a $\Z$-lattice such that $2^{-1}Q_1(\cM)\subset \Z$ and $\cL_1\subset \cM$ then $\cM=\cL_1$. The associated bi-linear form $Q_1(X,Y)=2^{-1}(Q_1(X+Y)-Q_1(X)-Q_1(Y))$ ($X,Y\in V_1$) on $V_1$ takes integral values on $\cL_1\times \cL_1$. Let $\cL_1^*:=\{X\in V_1|\,Q_1(X,\cL_1)\subset \Z\}$ be the dual lattice of $\cL_1$, and $\xi\in \cL_1^{*}$ a reduced vector, i.e., $\xi$ is primitive in $\cL_1^{*}$ and the lattice $\cL_1^{\xi}:=\cL_1\cap V_1^{\xi}$ is maximal integral in $(V_1^\xi,Q^\xi)$, where $V_1^\xi:=\{X\in V_1|Q_1(X,\xi)=0\}$ is the orthogonal complement of $\Q\xi$ and $Q_1^\xi=Q_1|V_1^\xi$. Set
\begin{align*}
\sG_1={\bf O}(Q), \quad \sG_1^{\xi}={\rm Stab}_{\sG_1}(\xi)\cong {\bf O}(Q_1^\xi). 
\end{align*}
For each prime number $p$, define
\begin{align*}
\bK_{1,p}&=\{g\in \sG_1(\Q_p)|\,g\cL_{1,p}=\cL_{1,p}\}, \quad \bK_{1,p}^{*}:=\{g\in \bK_{1,p}|\,(g-1)\,\cL_{1,p}^{*}\subset \cL_{1,p}\}, \\ 
\bK_{1,p}^{\xi}&=\{h\in \sG_1^{\xi}(\Q_p)|\,h\cL_{1,p}^{\xi}=\cL_{1,p}^{\xi}\}, \quad 
\bK_{1,p}^{\xi *}=\{h\in \bK_{1,p}^{\xi}|\,(h-1)\,\cL_{1,p}^{\xi *}\subset \cL_{1,p}^{\xi}\}, 
\end{align*}
where $\cL_{1}^{\xi*}$ is the dual lattice of $\cL_1^{\xi}$ in $V_1^\xi(\Q)$. From \cite[]{MS98}, we have
\begin{align}
\bK_{1,p}^{*}\cap \sG_1^{\xi}(\Q_p)=\bK_{1,p}^{\xi*} \quad (p<\infty).
\label{k+k+}
\end{align}
We suppose $\bK_{1,p}=\bK_{1,p}^{*}$ for all $p<\infty$ from now on, and set $\bK_{1,\fin}=\prod_{p<\infty}\bK_{1,p}$ etc. From \cite[Theorem 5.1]{PlRap}, there exists a finite subset $\{u_j\}_{j=1}^{t} \subset \sG_1(\A_\fin)$ with the disjoint decomposition:
\begin{align}
\sG_1(\A)=\bigcup_{j=1}^{t} \sG_1(\Q)u_{j}\sG_1(\R)\bK_{1,\fin},
 \label{Disj}
\end{align}
where $t$ is the class number of $\sG_1$. For $u=(u_p)_{p<\infty} \in \sG_1(\A_\fin)$, define
\begin{align*}
\cL_1(u)&:=V_1(\Q)\cap (V_1(\R)\,\prod_{p<\infty}u_p\,\cL_{1,p}), \\
\Gamma_{Q_1}(u)&:=\sG_1(\Q)\cap (\sG_1(\R)\prod_{p<\infty} u_p\bK_{1,p}u_p^{-1}). \end{align*}
Let $\cL_1(u)^{*}$ be the dual lattice of $\cL_1(u)\subset V_1(\Q)$. For $\Delta\in \Q$, set 
$$
\cL_1(u)^{*}_{{\rm prim}, [\Delta]}:=\{\eta \in \cL_1(u)^{*}_{\rm prim}|\,Q_1(\eta)=\Delta\}.
$$
\begin{prop}\label{Prop1} Set $\Delta=Q_1(\xi)$. 
There exists a bijective map  
\begin{align*}
\bar \j: \sG_1^{\xi}(\Q)\bsl \sG_1^{\xi}(\A_\fin)/\bK_{1,\fin}^{\xi *} 
{\rightarrow}\bigsqcup_{j=1}^{t}(\Gamma_{Q_1}(u_j)\bsl 
\cL_1(u_j)^{*}_{{\rm prim}, [\Delta]})
\end{align*}
such that for any $\bar h\in \sG_1^\xi(\Q)\bsl \sG_1^\xi(\A_\fin)/\bK_{1,\fin}^{\xi *}$ represented by $h\in \sG_1^\xi(\A_\fin)$ and a representative $\eta\in \cL_1(u_j)^{*}_{\rm prim}$ of $\bar \j(\bar h)\in \Gamma_{Q_1}(u_j)\bsl \cL_1(u_j)^{*}_{\rm{prim},[\Delta]}$,  
\begin{align}
\#(\sG_1^{\xi}(\Q)\cap h\bK_{1,\fin}^{\xi *}h^{-1})=\#(\Gamma_{Q_1}(u_j)_{\eta}),
 \label{P2}
\end{align}
where $\Gamma_{Q_1}(u_j)_{\eta}=\{\gamma\in \Gamma_{Q_1}(u_j)|\gamma\eta=\eta\}$. 
\end{prop}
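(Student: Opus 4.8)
The plan is to realize $\bar\j$ as the composite of two bijections, one translating between double cosets in the stabilizer group $\sG_1^\xi$ and $\Gamma_{Q_1}(u_j)$-orbits on primitive vectors of a given norm, and the other accounting for the disjoint decomposition \eqref{Disj}. First I would observe that for $\eta\in V_1(\Q)$ with $Q_1(\eta)=\Delta=Q_1(\xi)$, Witt's theorem applied to the nondegenerate space $(V_1,Q_1)$ gives an element $g_0\in \sG_1(\Q)$ with $g_0\xi=\eta$; moreover, since $\sG_1^\xi=\Stab_{\sG_1}(\xi)$, the coset $g_0\sG_1^\xi(\Q)$ depends only on $\eta$, and the adelic orbit map $g\mapsto g^{-1}\xi$ identifies $\sG_1^\xi(\A_\fin)\bsl \sG_1(\A_\fin)$ with the set of vectors in $V_1(\A_\fin)$ of norm $\Delta$ in the $\sG_1(\A_\fin)$-orbit of $\xi$. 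The key local input is \eqref{k+k+} together with the hypothesis $\bK_{1,p}=\bK_{1,p}^*$: this is what guarantees that $\bK_{1,\fin}^{\xi*}=\bK_{1,\fin}\cap \sG_1^\xi(\A_\fin)$ is exactly the stabilizer in $\sG_1^\xi(\A_\fin)$ of the lattice data cut out by $\xi$, so that the $\bK_{1,\fin}^{\xi*}$-orbits correspond precisely to primitive vectors of norm $\Delta$ in the various lattices $u_j\cL_{1,p}^*$.

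Next I would run the standard ``double coset = orbit'' dictionary. Fix $j$ and a representative $u_j$. The inclusion $\sG_1^\xi(\A_\fin)\hookrightarrow \sG_1(\A_\fin)$ together with \eqref{Disj} shows that every $h\in \sG_1^\xi(\A_\fin)$ can be written as $h=\gamma^{-1}u_j k$ with $\gamma\in \sG_1(\Q)$, $k\in \sG_1(\R)\bK_{1,\fin}$, for a unique $j$; pairing this with the orbit map produces, from $\bar h$, a well-defined vector $\eta=\gamma u_j.\xi\in \cL_1(u_j)^*$ (it lies in $\cL_1(u_j)^*$ because $\xi\in\cL_{1,p}^*$ for all $p$ and $k$ preserves the lattices up to the archimedean factor, which does not affect the rational lattice), and since $\xi$ is reduced — hence primitive in $\cL_{1,p}^*$ — the vector $\eta$ is primitive in $\cL_1(u_j)^*$. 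Conversely, given $\eta\in \cL_1(u_j)^*_{\mathrm{prim},[\Delta]}$, Witt plus the local conjugacy of maximal lattices produces $h$, and one checks the two constructions are mutually inverse modulo the respective double-coset/orbit equivalences; changing the representative $\eta$ within a $\Gamma_{Q_1}(u_j)$-orbit changes $h$ within its $\sG_1^\xi(\Q)$-double coset, and vice versa. This establishes the bijection $\bar\j$.

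Finally, \eqref{P2} is the orbit-stabilizer bookkeeping: under the correspondence $h\leftrightarrow (j,\eta)$ with $h=\gamma^{-1}u_j k$ and $\eta=\gamma u_j.\xi$, an element of $\sG_1^\xi(\Q)\cap h\bK_{1,\fin}^{\xi*}h^{-1}$ transports via conjugation by $\gamma u_j$ to an element of $\sG_1(\Q)\cap (\sG_1(\R)\,u_j\bK_{1,\fin}u_j^{-1})=\Gamma_{Q_1}(u_j)$ — here I use that the local intersection $u_jk\bK_{1,p}^*(u_jk)^{-1}\cap\sG_1^\xi(\Q_p)$ equals the stabilizer of $\eta$ in the relevant lattice, again by \eqref{k+k+} and $\bK_{1,p}=\bK_{1,p}^*$ — and that element fixes $\eta$, so lands in $\Gamma_{Q_1}(u_j)_\eta$; reversing the conjugation shows the map is a bijection, giving the equality of cardinalities.

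The main obstacle I anticipate is the careful verification that $\bar\j$ is \emph{well-defined and injective}, i.e.\ that the assignment does not depend on the choices (the representative $h$, the decomposition $h=\gamma^{-1}u_jk$, the Witt element), and in particular that two double cosets map to the same $\Gamma_{Q_1}(u_j)$-orbit only when they coincide. This is precisely where the hypotheses $\bK_{1,p}=\bK_{1,p}^*$ and the identity \eqref{k+k+} must be used at every prime simultaneously — without $\bK_{1,p}=\bK_{1,p}^*$ the level structure on the $\xi$-fixed side would be strictly smaller than $\bK_{1,p}\cap\sG_1^\xi(\Q_p)$ and the counting \eqref{P2} would fail. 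The surjectivity and the reverse direction of \eqref{P2} are comparatively routine once the local lattice-theoretic statement ``$\bK_{1,p}^*$ acts transitively on primitive vectors of fixed norm $\Delta$ in $\cL_{1,p}^*$ whose orthogonal complement lattice is maximal'' is in hand, which is exactly the content of the reducedness of $\xi$ combined with \cite{MS98}.
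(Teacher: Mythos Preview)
Your overall architecture is the same as the paper's: use the decomposition \eqref{Disj} to write $h$ as (rational)$\cdot u_j\cdot$(compact), send $h$ to a rational vector in the $u_j$-twisted dual lattice, check well-definedness and bijectivity, and then verify \eqref{P2} by transporting stabilizers via conjugation. The local ingredients you cite (Witt, reducedness of $\xi$, \eqref{k+k+}, the transitivity statement from \cite{MS98}) are exactly the ones the paper uses.

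However, there is a concrete error in how you produce the vector $\eta$, and the same confusion recurs in the stabilizer argument. You write $h=\gamma^{-1}u_jk$ and then set $\eta=\gamma u_j.\xi$. But $u_j\in\sG_1(\A_\fin)$ is an adelic element, so $\gamma u_j.\xi$ lies in $V_1(\A_\fin)$, not in $V_1(\Q)$; it is not a candidate for an element of the $\Z$-lattice $\cL_1(u_j)^*\subset V_1(\Q)$. The correct assignment (in your convention) is $\eta=\gamma\xi$: since $h$ fixes $\xi$, the finite part of $h=\gamma^{-1}u_jk$ gives $\gamma\xi=u_jk\xi$ in $V_1(\A_\fin)$, so the \emph{rational} vector $\gamma\xi$ satisfies $(\gamma\xi)_p=u_{j,p}k_p\xi_p\in u_{j,p}\cL_{1,p}^*$ for every $p$, hence $\gamma\xi\in\cL_1(u_j)^*$. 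The adele $u_j$ enters only through this local lattice membership, not as part of the action defining $\eta$.

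The same slip appears in your proof of \eqref{P2}: you propose to ``transport via conjugation by $\gamma u_j$'', but conjugating a rational element by the adelic $\gamma u_j$ does not yield a rational element. The right conjugation is by $\gamma$ alone: if $\delta_1\in\sG_1^\xi(\Q)\cap h\bK_{1,\fin}^{\xi*}h^{-1}$ and $h=\gamma^{-1}u_jk$, then $k^{-1}u_j^{-1}(\gamma\delta_1\gamma^{-1})u_jk\in\bK_{1,\fin}^{\xi*}$, whence $\gamma\delta_1\gamma^{-1}\in\sG_1(\Q)\cap u_j\bK_{1,\fin}u_j^{-1}=\Gamma_{Q_1}(u_j)$, and from $\delta_1\xi=\xi$ one gets $(\gamma\delta_1\gamma^{-1})\eta=\eta$. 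Once you replace $\gamma u_j$ by $\gamma$ in both places, your sketch matches the paper's proof.
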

\begin{proof} 
Let us define a map 
$$\j:\sG_1^{\xi}(\A_\fin) \rightarrow X:=\bigsqcup_{j=1}^{t}(\Gamma_{Q_1}(u_j)\bsl \cL_1(u_j)^{*}_{{\rm prim}, [\Delta]})
$$
as follows: Let $h\in \sG_1^\xi(\A_\fin)$ and write it as 
\begin{align}
h&=\gamma u_j g_\infty g_\fin \quad 
\text{with $\gamma\in \sG_1(\Q)$, $1\leq j\leq t$, $g_\infty \in \sG_1(\R)$ and $g_\fin\in \bK_{1,\fin}$.}
 \label{P1}
\end{align}
Since \eqref{Disj} is a disjoint union, $j$ is uniquely determined by $h$. Then the vector $\gamma^{-1}\xi \in V$ belongs to the lattice $\cL_1(u_j)^{*}$ and its $\Gamma_{Q_1}(u_j)$-orbit does not depend on the decomposition \eqref{P1}. Indeed, by looking at the finite component of \eqref{P1}, we have $h=\gamma u_j g_\fin$, or equivalently $\gamma^{-1}=u_j g_\fin h^{-1}$. Hence $\gamma^{-1}\xi=u_{j}g_\fin\,\xi$, which implies $(\gamma^{-1}\xi)_{p}=u_{j,p}g_p\,\xi_p\in u_{j,p}g_p\cL_{1,p}^{*}=u_{j,p}\cL_{1,p}^{*}=(\cL_1(u)^{*})_{p}$ for all $p<\infty$. Thus $\gamma^{-1}\xi \in \cL_1(u)^{*}$. If $h=\gamma'u_j g_\infty' g_\fin'$ be another decomposition like \eqref{P1}. Then $\gamma u_j g_\infty g_\fin=\gamma' u_j g_\infty' g_\fin'$ yields the relation $\gamma_\fin u_j g_\fin=\gamma_\fin' u_j g_\fin'$, or equivalently $\gamma_\fin^{-1}\gamma_\fin'=u_j(g_\fin (g_\fin')^{-1})u_j^{-1}$, which implies $\gamma^{-1}\gamma'\in \sG_1(\Q)\cap (\sG_1(\R)\,u_j\bK_{1,\fin}^{*}u_j^{-1})=\Gamma_{Q_1}(u_j)$. Thus $\gamma^{-1}\xi=\delta\,(\gamma')^{-1}\xi$ with some $\delta \in \Gamma_{Q_1}(u_j)$ as desired. 

Therefore, we have a well-defined map $\j:\sG_1(\A_\fin)\rightarrow X$ such that $$
\j(h)=\Gamma_{Q_1}(u_j)\,\gamma^{-1}\xi
$$
for any $h\in \sG_1(\A_\fin)$ with the decomposition \eqref{P1}. From this it is evident that $\j(\delta h k)=\j(h)$ for all $\delta\in \sG_1^{\xi}(\Q)$ and $k\in \bK_{1,\fin}^{*}\cap \sG_1^{\xi}(\A_\fin)$. By \cite[Proposition 2.3]{MS98}, we have $\bK_{1,\fin}^{*}\cap \sG_1^\xi(\A_\fin)=\bK_{1,\fin}^{\xi *}$. Hence by passing to the quotient, the map $\j$ induces a map 
$$
\bar \j: \sG^{\xi}(\Q)\bsl \sG^\xi(\A_\fin)/\bK_\fin^{\xi *} \rightarrow X.
$$
To confirm the injectivity of $\bar\j$, take $h,h'\in \sG_1^\xi(\A_\fin)$ with $\bar \j(h)=\bar \j(h')$. Let $h'=\gamma' u_i \gamma_\infty' g_\fin'$ be the decomposition of $h'$ like \eqref{P1}. Since $j$ is determined by $\bar \j(h)$ from the relation $\bar \j(h)\in \Gamma_{Q_1}(u_j)\bsl \cL_1(u_j)^{*}_{[\Delta]}$, we have $i=j$. Then the relation $\bar \j(h)=\bar \j(h')$ implies $\gamma^{-1}\xi=\delta\,(\gamma')^{-1}\xi$ with some $\delta\in \Gamma_{Q_1}(u_j)$. Hence $\beta:=\gamma'\delta^{-1}\gamma^{-1}\in \sG_1^{\xi}(\Q)$. Since $\gamma^{-1}\xi=u_j g_\fin \xi$ and $(\gamma')^{-1}\xi=u_j g_\fin'\xi$ in $V_1(\A_\fin)$, we also have $u_jg_\fin \xi=\delta_\fin u_j g_\fin ' \xi$, from which the element $g_{\fin}^{-1}u_j^{-1}\delta_\fin u_j g_\fin'$ is seen to belong to $\sG_1^{\xi}(\A_\fin)$. The last element also belongs to $\bK_\fin^*$ due to $\delta \in \Gamma_{Q_1}(u_j)$. Hence $\kappa^{-1}:=g_{\fin}^{-1}u_j^{-1}\delta_\fin u_j g_\fin'\in \sG_1^\xi(\A_\fin)\cap \bK_{1,\fin}^*=\bK_{1,\fin}^{\xi *}$. Using this, we have
\begin{align*}
h=\gamma_\fin u_j g_\fin
=\beta_\fin\gamma'_\fin (\delta_\fin ^{-1}u_j g_\fin)
=\beta_\fin\gamma_\fin (u_j g_\fin'\kappa) 
=\beta_\fin h' \kappa.
\end{align*}
This shows $h$ and $h'$ determines the same double coset in $\sG_1^{\xi}(\Q)\bsl \sG_1^\xi(\A_\fin)/\bK_{1,\fin}^{\xi *}$. 

Let us show the surjectivity of $\bar \j$; let $\eta \in \cL_1(u_j)^{*}_{{\rm prim}, [\Delta]}$ with $1\leq j\leq t$ and find $h\in \sG_1^{\xi}(\A_\fin)$ such that $\j(h)=\Gamma_{Q_1}(u_j)\eta$. Since $Q_1[\xi]=Q_1[\eta]$, we have $\gamma \in \sG_1(\Q)$ such that $\gamma^{-1} \xi=\eta$. Let $p$ be a prime number. From the assumption $\bK_{1,p}^{*}=\bK_{1,p}$ and \cite[Proposition 2.7 (ii)]{MS98}, we have the equality $$\{g\in \sG_1(\Q_p)|\,g^{-1}(\xi) \in (\cL_{1,p}^{*})_{\rm prim}\}=\sG_1^{\xi}(\Q_p)\,\bK_{1,p}.$$
 Since $u_{j,p}^{-1}\gamma^{-1}\xi=u_{j,p}^{-1} \eta \in (\cL_{1,p}^{*})_{\rm prim}$, we can find $h_p\in \sG_1^{\xi}(\Q_p)$ and $k_p \in \bK_{1,p}^{*}$ such that $\gamma_p u_{j,p} =h_pk_p$. Set $h=(h_p)_{p<\infty}\in \sG^{\xi}(\A_\fin)$ and $k:=(k_p)_{p<\infty}\in \bK_{1,\fin}$. Then we have the equality $\gamma u_{j}=hk$ in $\sG_1(\A_\fin)$. From this, we have $\j(h)=\Gamma_{Q_1}(u_j)\,\gamma^{-1}\xi=\Gamma_{Q_1}(u_j)\eta$ as desired.  

Let us prove the equality \eqref{P2} for $h \in \sG_1^\xi(\A_\fin)$ and $1\leq j\leq t$ with $\j(\bar h)\in \cL_1(u_j)^{*}$. Fix a decomposition \eqref{P1} of $h$ and set $\eta=\gamma^{-1}\xi$. Then it suffices to confirm the map $\delta \mapsto \gamma \delta\gamma^{-1}$ is a bijection from $\Gamma_{Q_1}(u_j)_{\eta}$ onto $\sG_1^{\xi}(\Q)\cap h\bK_{1,\fin}^{\xi *}h^{-1}$. Let $\delta \in \Gamma_{Q_1}(u_j)$; then we have $\delta \eta=\delta$, which is equivalently written as $g_\fin^{-1} u_j^{-1}\delta u_j g_\fin \xi=\xi$. Thus $g_\fin^{-1} u_j^{-1}\delta u_j g_\fin \in \sG_1^\xi(\A_\fin)$ on one hand. On the other hand, we have $g_{\fin}^{-1}u_j^{-1}\delta u_j g_\fin\in \bK_{1,\fin}^{*}$ due to the containment $\delta \in \Gamma_{Q_1}(u_j)$. Hence $g_\fin^{-1}u_j^{-1}\delta u_j g_\fin \in \sG_1^{\xi}(\A_\fin)\cap \bK_{1,\fin}^{*}=\bK_\fin^{\xi *}$ by \eqref{k+k+}. Therefore $\gamma \delta\gamma^{-1}=h(g_\fin^{-1}u_j^{-1}\delta u_j g_\fin)h^{-1}\in h\bK_{1,\fin}^{\xi *}h^{-1}\cap \sG_1^{\xi}(\Q)$. Hence the map $\delta \mapsto \gamma \delta\gamma^{-1}$ induces an injection from $\Gamma_{Q_1}(u_j)_{\eta}$ into $\sG_1^\xi(\Q)\cap h\bK_{1,\fin}^{\xi *}h^{-1}$. It remains to show the surjectivity of this map. For that, let $\delta_1 \in \sG_1^{\xi}(\Q)\cap h \bK_{1,\fin}^{\xi *}h^{-1}$. Then 
$$
\bK_{1,\fin}^{\xi *}\ni h^{-1}\delta_1 h=g_{\fin}^{-1}u_j^{-1}(\gamma^{-1} \delta_1\gamma )u_j g_\fin,
$$
which combined with $g_\fin\in \bK_{1,\fin}^{*}$ yields $\gamma^{-1}\delta_1\gamma \in u_jg_\fin \bK_{1,\fin}^{\xi *} g_\fin ^{-1}u_j^{-1}\subset u_j\bK_{1,\fin}^{*}u_j^{-1}$; thus $\gamma^{-1}\delta_1\delta\in \sG_1(\Q)\cap (\sG_1(\R) u_j\bK_{1,\fin}^{*}u_j^{-1})=\Gamma_{Q_1}(u_j)$. From $\delta_1 \in \sG_1^{\xi}(\Q)$, we have $\delta_1\xi=\xi$, or equivalently $\gamma^{-1}\delta_1\gamma \eta=\eta$, Hence $\delta:=\gamma^{-1}\delta_1\gamma \in \Gamma_{Q_1}(u_j)_{\eta}$ and $\delta_1=\gamma \delta \gamma^{-1}$ as desired. 
\end{proof}

Since $\xi \in \cL_1^{*}$ is supposed to be reduced, it is primitive in $\cL_1^{*}$. Since $V_1$ is isotropic by assumption, there exists a pair of isotropic vectors $\{v_0,v_0'\}$ such that $Q_1(v_0,v_0')=1$, $Q_1(v_0,\xi)=1$ and $\cL=(\Z v_0+\Z v_0')\oplus \cL_0$ with $\cL_0=\cL_1\cap \langle v_0,v_0'\rangle_\Q^{\bot}$. We introduce the following notation to write a general element of $V$:  
$$
\left[\begin{smallmatrix} x \\ y \\ z\end{smallmatrix} \right]:=xv_0+y+zv_0', \quad (x,z\in \Q,\,y\in V_0:=\langle v_0,v_0'\rangle^{\bot}_\Q).
$$
Then there exists $a\in \Z$ and $\alpha \in \cL_0^{*}$ such that 
$$
\xi=\left[\begin{smallmatrix} a \\ \alpha \\ 1\end{smallmatrix} \right]. 
$$
If we set 
$$
 [y,z]_{\xi}:=\left[\begin{smallmatrix} -z-Q_1(\alpha,y) \\ y \\ z\end{smallmatrix} \right]\quad (y\in V_0,\,z\in \Q), 
$$
then $V_1^{\xi}=\{[y,z]_{\xi}|\,y\in V_0,\,z\in \Q\}$ and 
$$Q_1([y,z]_\xi)=-2z^2-2Q_1(y,\alpha)\,z+Q_1(y).
$$
Thus we have
\begin{align}
\cL_1^{\xi}&=\{[y,z]_\xi|\,y\in \cL_0,\,z\in \Z\}, 
 \notag \\
\cL_1^{\xi *}&=\{[y,z]_{\xi}|\,Q(\cL_0,y-\alpha z)\subset \Z,\,2z+Q_1(\alpha,y)\in \Z\}.
 \label{+++}
\end{align}
Define $\tilde \sigma:V_1 \rightarrow V_1$ by demanding $\sigma(\xi)=\xi$ and 
$$
\tilde \sigma:[y,z]_\xi \mapsto [y,-z-Q_1(\alpha,y)]_{\xi}, \quad [y,z]_\xi\in V_1^\xi.$$
Then the containment $\tilde \sigma\in \sG_1^\xi(\Q)$ is confirmed by a computation. 
\begin{lem} \label{StabConnected-L}
  For any $p<\infty$, let $\tilde \sigma_p$ be the image of $\sigma$ in $\sG_1^{\xi}(\Q_p)$. Then we have $\tilde \sigma_p\in \bK_p^{\xi *}$. 
\end{lem}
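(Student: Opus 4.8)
The goal is to show that the local element $\tilde\sigma_p \in \sG_1^\xi(\Q_p)$ lies in $\bK_{1,p}^{\xi *}$, i.e., that $\tilde\sigma_p$ preserves the lattice $\cL_{1,p}^\xi$ and that $(\tilde\sigma_p - 1)\cL_{1,p}^{\xi *} \subset \cL_{1,p}^\xi$. The plan is to verify both conditions directly from the explicit coordinate description of $\cL_1^\xi$ and $\cL_1^{\xi *}$ given in \eqref{+++}, using the explicit formula $\tilde\sigma:[y,z]_\xi \mapsto [y,-z-Q_1(\alpha,y)]_\xi$.

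First I would check the stability $\tilde\sigma_p\cL_{1,p}^\xi = \cL_{1,p}^\xi$. By \eqref{+++}, $\cL_{1,p}^\xi = \{[y,z]_\xi \mid y\in \cL_{0,p},\ z\in \Z_p\}$; applying $\tilde\sigma_p$ sends this to $[y, -z - Q_1(\alpha,y)]_\xi$. Since $\alpha\in \cL_0^*$ and $y\in\cL_{0,p}$ we have $Q_1(\alpha,y)\in\Z_p$, hence $-z-Q_1(\alpha,y)\in\Z_p$, so $\tilde\sigma_p$ maps $\cL_{1,p}^\xi$ into itself; as $\tilde\sigma_p^2=1$ (which follows from the defining formula: applying it twice returns $[y,z]_\xi$), it is a bijection of $\cL_{1,p}^\xi$. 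Next, for the condition $(\tilde\sigma_p - 1)\cL_{1,p}^{\xi *}\subset\cL_{1,p}^\xi$, take a general $[y,z]_\xi \in \cL_{1,p}^{\xi *}$, so by \eqref{+++} we have $Q_1(\cL_{0,p}, y - \alpha z)\subset\Z_p$ and $2z + Q_1(\alpha,y)\in\Z_p$. Then $(\tilde\sigma_p - 1)[y,z]_\xi = [0,\, -2z - Q_1(\alpha,y)]_\xi$, and I must show $-2z - Q_1(\alpha,y)\in\Z_p$ (the $y$-component being $0\in\cL_{0,p}$). But $-2z - Q_1(\alpha,y) = -(2z + Q_1(\alpha,y))$, which is exactly the integrality condition appearing in the definition of $\cL_{1,p}^{\xi *}$; hence it lies in $\Z_p$, and $(\tilde\sigma_p-1)[y,z]_\xi \in \cL_{1,p}^\xi$ as required.

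I would then conclude that $\tilde\sigma_p$ satisfies both defining conditions of $\bK_{1,p}^{\xi *}$, completing the proof. The argument is essentially a bookkeeping check once the coordinate formulas \eqref{+++} and the explicit action of $\tilde\sigma$ are in hand; there is no serious obstacle, the only point requiring slight care being that $\tilde\sigma_p$ genuinely preserves $\cL_{1,p}^\xi$ (not merely $\cL_{1,p}^{\xi*}$), which as noted follows from $Q_1(\alpha, \cL_{0,p})\subset\Z_p$. One should also note that since $\tilde\sigma\in\sG_1^\xi(\Q)$ acts $\Q_p$-rationally and fixes $\xi$, its reduction $\tilde\sigma_p$ indeed lies in $\sG_1^\xi(\Q_p)$, so the statement makes sense; the global element $\tilde\sigma$ being independent of $p$, the verification above applies uniformly to all primes $p<\infty$.
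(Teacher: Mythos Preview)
Your proof is correct and follows essentially the same approach as the paper: the paper declares $\tilde\sigma(\cL_1^\xi)\subset\cL_1^\xi$ to be obvious and then computes $\tilde\sigma([y,z]_\xi)-[y,z]_\xi=[0,-2z-Q_1(\alpha,y)]_\xi\in\cL_1^\xi$ by \eqref{+++}, exactly as you do. Your write-up is slightly more detailed (spelling out why $Q_1(\alpha,y)\in\Z_p$ and invoking $\tilde\sigma^2=1$ for bijectivity), but the argument is the same.
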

\begin{proof}
From definition, $\tilde \sigma(\cL_1^\xi)\subset \cL_1^\xi$ is obvious. For any $(y,z)_\xi \in \cL_1^{\xi *}$, 
\begin{align*}
\tilde \sigma([y,z]_\xi)-[y,z]_\xi=[y,-z-Q_1(\alpha,y)]_\xi-[y,z]_{\xi}=[0,-2z-Q_1(\alpha,y)]_{\xi} \in \cL_1^{\xi}
\end{align*}
by \eqref{+++}. 
\end{proof}

\subsection{Ternary case} \label{sec:Tern}
Let 
\begin{align*}
V_1=\Bigl\{X=\left[\begin{smallmatrix} x & y \\ z & -x \end{smallmatrix}\right]\in {\Mat}_{2}(\Q)|\,\tr(X)=0\Bigr\}
, \quad Q_1(X)=-2\det X=2x^{2}+2yz. 
\end{align*}
If we identify $X=\left[\begin{smallmatrix} x & y \\ z & -x \end{smallmatrix}\right]$ with the vector $\tilde X={}^t(y,x,z)\in \Q^{3}$ then 
$$
Q_1(X)={}^t \tilde X \left[\begin{smallmatrix} 0 & 0 & 1 \\ 0 & 2 &0 \\ 1 & 0 & 0\end{smallmatrix}\right] \tilde X. 
$$
We have that $\cL_1:=V(\Z)\cong \Z^{3}$ is an integral lattice in $(V_1,Q_1)$ and 
\begin{align}
\cL_1^{*}=\{\left[\begin{smallmatrix} x & y \\ z & -x \end{smallmatrix}\right]\in {\Mat}_{2}(\Q)|y,z\in \Z,2x\in \Z\}\cong \Z\oplus 2^{-1}\Z\oplus \Z.
\label{cL1star}
\end{align}
Since $\cL_1^{*}/\cL_1\cong \Z/2\Z$, we see that $\cL_1$ is a maximal integral lattice and $\bK_{1,p}=\bK_{1,p}^{*}$ for all $p<\infty$. By letting ${\bf GL}_2$ acts on $V_1$ as 
$$
 {\bf GL}_2 \times V_1 \ni (g,X)\mapsto g Xg^{-1} \,\in V_1, 
$$
we have a $\Q$-rational isomorphism $\ss: {\bf PGL}_2 \rightarrow {\bf SO}(Q)=\sG^0$ such that 
\begin{align}
\ss(\left[\begin{smallmatrix} a & b \\ c & d\end{smallmatrix}\right])=(ad-bc)^{-1}\left[\begin{matrix} a ^2 & -2ab & -b^2 \\
 -ac & ad+bc & bd \\ 
-c^2 & 2dc & d^2 \end{matrix} \right];  
\label{ssFormula}
\end{align}
$\ss$ preserves the integral structure, i.e., ${\rm PGL}_2(\Z_p)\cong \sG_1^{0}(\Q_p)\cap \bK_{1,p}$ for all $p<\infty$. Moreover, $\sG_1=\sG_1^0 \times Z_1$, where $Z_1=\langle c^{\sG_1} \rangle$ with $c^{\sG_1}=-{\rm id}$ is the center of $\sG_1={\bf O}(Q_1)$. 

For a fundamental discriminant $D$ such that $D<0$. Set
\begin{align*}
&\xi_{D}=\left[\begin{smallmatrix} 0 & 1 \\ D/4 & 0\end{smallmatrix} \right] \quad D\equiv 0 \pmod{4}, \\
&\xi_{D}=\left[\begin{smallmatrix} 1/2 & 1 \\ (D-1)/4 & -1/2\end{smallmatrix} \right] \quad D\equiv 1 \pmod{4}.
\end{align*}

\begin{lem} \label{L-GxiDclassBiQ}
We have that $Q_1(\xi_D)=D/2$ and $\xi_D\in \cL_1^*$ is a reduced vector. We have
\begin{align*}
\sG_1^{\xi_D}(\Q)\bsl \sG_1^{\xi_D}(\A_\fin)/\bK_{1,\fin}^{\xi_D *}\cong {\bf SL}_2(\Z)\bsl \cQ^{+}_{\rm{prim}}(D),
\end{align*}
where 
$$
 \cQ^{+}_{\rm{prim}}(D)=\Bigl\{\left[\begin{smallmatrix} b & a/2 \\ a/2 & c \end{smallmatrix}\right] |b,c,a\in \Z,\,b>0,\,(a,b,c)=1,\,{a^2}-4bc=D\Bigr\}
$$
on which ${\bf SL}_2(\Z)$ acts by ${\bf SL}_2(\Z) \times \cQ^{+}_{\rm{prim}}(D)\ni (\gamma,T)\mapsto \gamma T{}^t \gamma\in \cQ^{+}_{\rm{prim}}(D)$. 
\end{lem}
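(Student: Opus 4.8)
The plan is to verify the three assertions of Lemma~\ref{L-GxiDclassBiQ} in turn, using Proposition~\ref{Prop1} as the main engine for the second one. First I would compute $Q_1(\xi_D)$ directly from $Q_1(X)=2x^2+2yz$: for $D\equiv 0\pmod 4$ one has $x=0$, $y=1$, $z=D/4$, giving $Q_1(\xi_D)=2\cdot(D/4)=D/2$; for $D\equiv 1\pmod 4$ one has $x=1/2$, $y=1$, $z=(D-1)/4$, giving $Q_1(\xi_D)=2\cdot\tfrac14+2\cdot\tfrac{D-1}{4}=\tfrac12+\tfrac{D-1}{2}=D/2$. To see that $\xi_D\in\cL_1^*$ is reduced, I note from \eqref{cL1star} that $\cL_1^*$ consists of matrices with $y,z\in\Z$ and $2x\in\Z$, which $\xi_D$ visibly satisfies; primitivity in $\cL_1^*$ follows since in both cases one of the entries of $\xi_D$ (e.g.\ $y=1$, or $2x=1$) is a unit, so $\xi_D$ cannot be a proper multiple of a vector in $\cL_1^*$. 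For maximality of $\cL_1^{\xi_D}=\cL_1\cap V_1^{\xi_D}$ I would invoke the structure described around \eqref{+++}: the lattice $\cL_1^{\xi_D}$ is described explicitly there, and one checks $\cL_1^{\xi_D*}/\cL_1^{\xi_D}$ has order matching that forced by maximality in the binary form $Q_1^{\xi_D}$; since the class number/discriminant bookkeeping is the one recorded in \cite{MS98}, I would cite it rather than redo the local computation at each prime.

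The heart of the proof is the bijection. By Proposition~\ref{Prop1} applied with $\xi=\xi_D$ and $\Delta=Q_1(\xi_D)=D/2$, there is a bijection
\begin{align*}
\sG_1^{\xi_D}(\Q)\bsl \sG_1^{\xi_D}(\A_\fin)/\bK_{1,\fin}^{\xi_D *}\;\cong\;\bigsqcup_{j=1}^{t}\bigl(\Gamma_{Q_1}(u_j)\bsl \cL_1(u_j)^*_{\mathrm{prim},[\Delta]}\bigr).
\end{align*}
Since $\sG_1={\bf O}(Q_1)$ in the ternary case has class number $t=1$ (the genus of $\cL_1$ consists of a single class, as $\cL_1^*/\cL_1\cong\Z/2\Z$ and $\sG_1=\sG_1^0\times Z_1$ with $\sG_1^0\cong{\bf PGL}_2$), one may take $u_1=1$, so $\Gamma_{Q_1}(u_1)=\sG_1(\Q)\cap\bK_{1,\fin}$ and $\cL_1(u_1)=\cL_1$. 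Next I would transport everything through the isomorphism $\ss:{\bf PGL}_2\xrightarrow{\sim}\sG_1^0$ of \eqref{ssFormula}, under which ${\rm PGL}_2(\Z_p)$ corresponds to $\sG_1^0(\Q_p)\cap\bK_{1,p}$; together with the decomposition $\sG_1=\sG_1^0\times Z_1$ this identifies $\Gamma_{Q_1}(1)$ with the image of ${\rm PGL}_2(\Z)$ (and the central factor acts trivially on $V_1/\{\pm1\}$, hence on the cone of $\eta$'s since $-\eta$ and $\eta$ give the same $Q_1$-value but we must track this carefully). Finally, identifying $\cL_1^*$ via \eqref{cL1star} with symmetric-type data: a reduced vector $\eta=\left[\begin{smallmatrix} x & y \\ z & -x\end{smallmatrix}\right]\in\cL_1^*_{\mathrm{prim}}$ with $Q_1(\eta)=D/2$, i.e.\ $2x^2+2yz=D/2$, corresponds to the binary quadratic form / matrix $T=\left[\begin{smallmatrix} b & a/2\\ a/2 & c\end{smallmatrix}\right]$ with $a=2x$ (or a suitable normalization making $a,b,c$ integral), $b=\pm z$, $c=\mp y$, so that $a^2-4bc=4x^2+4yz=D$; primitivity of $\eta$ translates to $(a,b,c)=1$, positivity of the relevant discriminant-$D<0$ form to $b>0$ after choosing the sign, and the ${\bf GL}_2$-conjugation action $gXg^{-1}$ becomes exactly the action $\gamma T{}^t\gamma$ of ${\bf SL}_2(\Z)$ (the passage from ${\bf PGL}_2$ to ${\bf SL}_2$ being harmless since $-1\in{\bf GL}_2$ acts trivially by conjugation). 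Assembling these identifications yields $\sG_1^{\xi_D}(\Q)\bsl \sG_1^{\xi_D}(\A_\fin)/\bK_{1,\fin}^{\xi_D *}\cong{\bf SL}_2(\Z)\bsl\cQ^+_{\mathrm{prim}}(D)$.

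The main obstacle I anticipate is bookkeeping the precise dictionary between $\cL_1^*$-vectors and elements of $\cQ^+_{\mathrm{prim}}(D)$: one must get the $\tfrac12$-integrality conditions of \eqref{cL1star} to match the half-integer $a/2$ entries of $\cQ$, check that the ${\bf GL}_2$-conjugation on trace-zero matrices really corresponds to the $\gamma\mapsto\gamma T{}^t\gamma$ action (this is the classical $\mathrm{PGL}_2$-action on binary quadratic forms, but the signs and the reflection $\tilde\sigma$ of the previous subsection must be handled — in particular one should decide whether $\sG_1^{\xi_D}$ versus $(\sG_1^{\xi_D})^0$ is in play, since $\tilde\sigma_p\in\bK_p^{\xi_D*}$ by Lemma~\ref{StabConnected-L} means the extra component is already absorbed into the level group), and confirm that ``primitive and positive-definite of discriminant $D$'' on the form side matches ``reduced/primitive in $\cL_1^*$ with $Q_1=D/2$'' on the vector side. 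None of this is deep, but it requires care with factors of $2$ and with the orbit of $\pm\eta$. I would organize it as: (1) $t=1$; (2) the explicit bijection $\cL_1^*_{\mathrm{prim},[D/2]}\leftrightarrow\cQ^+_{\mathrm{prim}}(D)$ compatible with group actions; (3) conclude by Proposition~\ref{Prop1}.
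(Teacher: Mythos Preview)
Your approach is essentially the same as the paper's: verify $Q_1(\xi_D)=D/2$ by direct computation, note that the class number $t=1$ (the paper derives this from strong approximation ${\bf GL}_2(\A)={\bf GL}_2(\Q){\bf GL}_2(\R){\bf GL}_2(\widehat\Z)$ rather than from $\cL_1^*/\cL_1\cong\Z/2\Z$), apply Proposition~\ref{Prop1} to reduce to $\Gamma_{Q_1}\bsl\cL^*_{1,{\rm prim},[D/2]}$, and then set up the dictionary with binary quadratic forms. The paper makes the last step clean by using the explicit map $i:X\mapsto Xw$ with $w=\left[\begin{smallmatrix}0&1\\-1&0\end{smallmatrix}\right]$, which satisfies $i(gXg^{-1})=(\det g)^{-1}\,g\,i(X)\,{}^t g$; this avoids the sign-chasing you flag as an obstacle.

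One point to sharpen: your remark that ``the central factor acts trivially on $V_1/\{\pm1\}$'' and that ``the passage from ${\bf PGL}_2$ to ${\bf SL}_2$ is harmless since $-1$ acts trivially by conjugation'' does not by itself settle the reduction to ${\bf SL}_2(\Z)$. The group $\Gamma_{Q_1}$ is $({\bf GL}_2(\Z)/\{\pm1_2\})\times\{1,c^{\sG_1}\}$, where $c^{\sG_1}$ acts as $T\mapsto -T$ on $\cQ$ and $\gamma\in{\bf GL}_2(\Z)$ acts as $T\mapsto(\det\gamma)\,\gamma T\,{}^t\gamma$. Both $c^{\sG_1}$ and the $\det=-1$ elements swap positive and negative definite forms in $\cQ_{\rm prim}(D)$; it is the combination of these two extra pieces that allows one to choose a positive-definite representative in each orbit and to reduce the acting group on $\cQ^{+}_{\rm prim}(D)$ to ${\bf SL}_2(\Z)$. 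The paper states this reduction $({\bf GL}_2(\Z)\ltimes\{1,\tilde c\})\bsl\cQ_{\rm prim}(D)\cong{\bf SL}_2(\Z)\bsl\cQ^{+}_{\rm prim}(D)$ as the final line; you should make this step explicit rather than folding it into ``harmless''.
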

\begin{proof} $Q_1(\xi_D)=D/2$ is confirmed by a computation. From ${\bf GL}_2(\A)={\bf GL}_2(\Q){\bf GL}_2(\R){\bf GL}_2(\hat \Z)$, we have 
$$
\sG_1^0(\A)=\sG_1^{0}(\Q)\sG_1^0(\R) (\sG_1^0(\A_\fin)\cap \bK_{1,\fin}).  
$$
Since $Z_1(\A_\fin)\subset \bK_{1,\fin}$, this gives us 
$$
\sG_1(\A)=\sG_1(\Q)\sG_1(\R)\bK_{1,\fin}.
$$
Thus from Proposition~\ref{Prop1}, 
\begin{align}
\sG_1^{\xi_D}(\Q)\bsl \sG_1^{\xi_D}(\A_\fin)/\bK_{1,\fin}^{\xi_D*}\cong \Gamma_{Q_1} \bsl \cL^{*}_{1,{\rm prim},[D/2]},
 \label{L-GxiDclassBiQ-f1}
\end{align}
where
$$
\Gamma_{Q_1}=\{g\in \sG_1(\Q)|\,u\cL_1=\cL_1\}.
$$
Let
$$
\cQ=\{\left[\begin{smallmatrix} b & a/2 \\ a/2 & c \end{smallmatrix}\right]\in {\Mat}_{2}(\Q)|b,c,a\in \Z\}
$$
identified with the space of integral binary quadratic forms $[b,a,c]=bx^2+axy+cy^2$ and $\cQ_{\rm prim}$ the space of primitive integral binary quadratic forms $[b,a,c]$ (${\rm gcd}(a,b,c)=1$). 
The map
$$
i:X\rightarrow X w, \quad w=\left[\begin{smallmatrix} 0 & 1 \\ -1 & 0 \end{smallmatrix}\right]
$$
yields $i:\cL_1^{*} \underset{\cong}{\rightarrow} \cQ$ such that 
$$
 i(gXg^{-1})=(\det g)^{-1}\,g\,i(X)\,{}^t g, \quad g\in {\bf GL}_2(\Z).
$$
Let $Q_1'$ be the quadratic form on $\cQ$, the transform of $Q_1$ by $i$; then $Q_1'(\left[\begin{smallmatrix} b & a/2 \\ a/2 & c \end{smallmatrix}\right])=-2\det(\left[\begin{smallmatrix} b & a/2 \\ a/2 & c \end{smallmatrix}\right]w)=-2(bc-\frac{a^2}{4})$. We have $i(\cL_{1,{\rm prim},[D/2]}^*)=\cQ_{\rm prim}(D)$, where $\cQ_{\rm prim}(D):=\{T\in \cQ_{\rm prim}|\,Q'_1(T)=D/2\}$. By \eqref{L-GxiDclassBiQ-f1}, it suffices to show that $i$ induces a bijection 
$$
\Gamma_{Q_1}\bsl \cL_{1,{\rm prim},[D/2]}^{*} \cong {\bf SL}_2(\Z)\bsl \cQ_{\rm prim}^{+}(D).
$$ We have
$$
 \Gamma_{Q_1}\underset{\ss}{\cong} {\bf GL}_2(\Z)/\{\pm 1_2\}\ltimes \{1,\tilde c \}
$$
by defining $\ss(\tilde c)=c^{\sG}$. By the map induced from $i$, the orbit space $\Gamma_{Q_1}\bsl \cL_{{\rm prim},[D/2]}^{*}$ is identified with the ${\bf GL}_2(\Z)\ltimes \{1,\tilde c\}$-equivalence classes in $\cQ_{\rm prim}(D)$ where $\gamma \in {\bf GL}_2(\Z)$ acts on $\cQ$ as $X\mapsto \det(\gamma)\,\gamma X{}^t \gamma$ and $\tilde c$ acts on $\cQ$ as $X\mapsto -X$. Since
$$
({\bf GL}_2(\Z) \ltimes\{1,\tilde c\}) \bsl \cQ_{\rm{prim}}(D)\cong {\bf SL}_2(\Z)\bsl \cQ_{\rm{prim}}^{+}(D), 
$$
we are done. 
\end{proof}
Let $E=\Q(\sqrt{D})$ be the quadratic extension of discriminant $D<0$. Set
$$
\omega=
\begin{cases}
\frac{\sqrt{D}}{2} \quad &(D\equiv 0 \pmod{4}), \\
\frac{\sqrt{D}-1}{2} \quad &(D\equiv 1 \pmod{4}).
\end{cases}
$$
Then $\{1,\omega\}$ is a $\Z$-basis of the integer ring $\cO_E$ of $E$, i.e., $\cO_E=\Z\oplus \Z\omega$. Set $w=\left[\begin{smallmatrix} 0 & 1 \\ -1 & 0 \end{smallmatrix}\right]$ and $T_{D}=\xi_{D}w^{-1}$. For $\alpha \in E$, its conjugate is denoted by $\bar \alpha$. Then a computation reveals that the relation
$$
(X+ \omega Y)(X+\bar \omega\,Y)=[X,Y] T_D \left[\begin{smallmatrix} X \\ Y \end{smallmatrix}\right] 
$$
holds in the polynomial ring $\C[X,Y]$, where $\{X,Y\}$ is a set of indeterminates. We have an embedding $\iota: E^\times \rightarrow {\bf GL}_2$ such that 
\begin{align}
[\tau,\tau\omega]=[1,\omega]\,{}^t(\iota(\tau)), \quad \tau \in E^\times,
 \label{embedE}
\end{align}
whose image coincides with 
$$
{\bf GO}(T_D)^{0}=\{h\in {\bf GL}_2|\,hT_D{}^t h=(\det h)\,T_{D}\}=\{h\in {\bf GL}_2|\,\ss(h)\xi_{D}=\xi_{D}\}. 
$$
Indeed, set $h=\iota(\tau)$ and put $X'=h_{11}X+h_{21}Y$, $Y'=h_{12}X+h_{22}Y$, i.e., $[X',Y']=[X,Y]\,h$. Then, from \eqref{embedE}, 
\begin{align*}
\nr(\tau) [X,Y]T_{D}\left[\begin{smallmatrix} X \\ Y \end{smallmatrix}\right]
&=(\tau X+\tau \omega Y)(\tau X+\bar\tau \bar \omega Y) \\
&=\{(h_{11}+h_{12}\omega)X+(h_{21}+h_{22}\omega)Y\}\{(h_{11}+h_{12}\bar \omega)X+(h_{21}+h_{22}\bar \omega)Y\}
\\
&=\{(h_{11}X+h_{21}Y)+\omega(h_{12}X+h_{22}Y)\}\{(h_{11}X+h_{21}Y)+\bar\omega(h_{12}X+h_{22}Y)\}
\\
&=(X'+\omega Y')(X'+\bar \omega Y')
= [X',Y']T_{D}\left[\begin{smallmatrix} X '\\ Y' \end{smallmatrix}\right]
\\
&= [X,Y]h T_{D}\,{}^th \left[\begin{smallmatrix} X \\ Y \end{smallmatrix}\right].
\end{align*}
Therefore, 
$$
\nr(\tau)T_{D}=h T_{D}\,{}^th, \quad \det h=\nr(\tau). 
$$
The composite of the isomorphisms $\iota:E^\times \rightarrow {\bf GO}(T_D)^{0}$ and $\ss:{\bf PGL}_2\rightarrow {\bf SO}(Q_1)=\sG_1^{0}$ induces an isomorphism 
$$
\ss\circ \iota: E^{\times}/\Q^\times \underset{\iota}{\cong }{\bf PGO}(T_{D})^{0} \underset{\ss}{\cong} {\bf SO}(Q_1)_{\xi_D}=\sG_1^{0}\cap \sG_1^{\xi_D}=(\sG_1^{0})^{\xi_D}. 
$$
\begin{lem} \label{L-CLd-stab}
 The map $\ss\circ \iota$ induces a bijection
$$
\A_{E,\fin}^{\times}/E^\times {\widehat \cO_E}^\times \cong \sG_1^{\xi_D}(\Q)\bsl \sG_1^{\xi_D}(\A_\fin)/\bK_{1,\fin}^{\xi_D *}.
$$
\end{lem}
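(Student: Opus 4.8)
The plan is to combine the bijection of Lemma~\ref{L-GxiDclassBiQ} with the classical analytic-ideal-class-group dictionary for the imaginary quadratic field $E=\Q(\sqrt D)$. Concretely, Lemma~\ref{L-GxiDclassBiQ} already identifies $\sG_1^{\xi_D}(\Q)\bsl \sG_1^{\xi_D}(\A_\fin)/\bK_{1,\fin}^{\xi_D *}$ with ${\bf SL}_2(\Z)\bsl \cQ^{+}_{\rm prim}(D)$, which by the standard theory of binary quadratic forms is in bijection with the ideal class group ${\rm Cl}_D\cong \A_{E,\fin}^{\times}/E^{\times}\widehat\cO_E^{\times}$. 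So the only real content of the present lemma is to check that the map realizing this identification is \emph{the one induced by} $\ss\circ\iota$, i.e. that the adelic map $\ss\circ\iota$ is compatible with the form-theoretic description on the nose. First I would record, as already established in the text, that $\ss\circ\iota$ gives a $\Q$-rational isomorphism $E^{\times}/\Q^{\times}\cong (\sG_1^{0})^{\xi_D}$, and that $\sG_1^{\xi_D}=(\sG_1^{0})^{\xi_D}$ — the latter because $\tilde\sigma$ (the image of $\sigma$) already lies in $(\sG_1^0)^{\xi_D}$ is \emph{not} automatic, so I would instead note $\sG_1^{\xi_D}=(\sG_1^0)^{\xi_D}\times Z_1$ coming from $\sG_1=\sG_1^0\times Z_1$, and that $Z_1(\A_\fin)\subset \bK_{1,\fin}$ (as used in the proof of Lemma~\ref{L-GxiDclassBiQ}), so that the central factor contributes nothing to the double coset space.

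The key steps, in order, would be: (1) apply $\ss\circ\iota$ to pass from $\A_{E,\fin}^{\times}/\Q_\fin^{\times}$ to $\sG_1^{\xi_D}(\A_\fin)/(\text{center})$, and observe the quotient by $\Q_\fin^{\times}$ is harmless because scalars go into $Z_1\subset \bK_{1,\fin}$; (2) verify that $\iota(\widehat\cO_E^{\times})\subset \bK_{1,\fin}$ and in fact $\iota(\widehat\cO_E^{\times})$ maps onto $\bK_{1,\fin}^{\xi_D *}$ under $\ss$ — this is the local statement, to be checked prime by prime using the explicit formula \eqref{ssFormula} for $\ss$, the description \eqref{+++}/\eqref{cL1star} of $\cL_1^{\xi_D *}$, and the fact (already invoked via \cite[Prop.~2.7]{MS98} in the proof of Proposition~\ref{Prop1}) that stabilizing $\xi_D$ together with preserving $\cL_{1,p}$ forces the stronger congruence condition defining $\bK_{1,p}^{\xi_D *}$; (3) verify that the left quotient by $E^{\times}$ matches the left quotient by $\sG_1^{\xi_D}(\Q)=(\ss\circ\iota)(E^{\times}/\Q^{\times})\times Z_1(\Q)$; (4) conclude that $\ss\circ\iota$ descends to a well-defined map on double cosets, which is bijective because it is the composite of the isomorphism $E^{\times}/\Q^{\times}\cong (\sG_1^0)^{\xi_D}$ with the passage to double cosets, the surjectivity and injectivity of the latter being exactly what was just checked in steps (2)--(3).

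There is an alternative, essentially bookkeeping route that bypasses the explicit local computation: compose the bijection of Lemma~\ref{L-GxiDclassBiQ}, ${\bf SL}_2(\Z)\bsl\cQ^{+}_{\rm prim}(D)\cong {\rm Cl}_D$ (the classical Gauss/Dedekind correspondence $[b,a,c]\mapsto [\Z b+\Z\tfrac{-a+\sqrt D}{2}]$), and ${\rm Cl}_D\cong\A_{E,\fin}^{\times}/E^{\times}\widehat\cO_E^{\times}$, and then simply check that this composite agrees with $\ss\circ\iota$ on a single representative in each class (or on the level of the ambient groups, using \eqref{embedE}). I would likely present this second route, since it reduces the verification to matching the explicit embedding \eqref{embedE} of $E^{\times}$ with the standard action of fractional ideals on quadratic forms — a computation already half-done in the displayed identity $(X+\omega Y)(X+\bar\omega Y)=[X,Y]T_D\,{}^t[X,Y]$ preceding the lemma.

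The main obstacle I expect is step (2): showing that $\ss(\iota(\widehat\cO_E^{\times}))$ is \emph{exactly} $\bK_{1,\fin}^{\xi_D *}$ and not merely contained in $\bK_{1,\fin}$. The subtlety is the star-condition $(h-1)\cL_{1,p}^{\xi_D *}\subset\cL_{1,p}^{\xi_D}$, which at $p=2$ interacts with the half-integral entry in $\cL_1^{*}$ (cf. \eqref{cL1star}) and with the two cases $D\equiv 0,1\pmod 4$; here one must use that $\xi_D$ is \emph{reduced} (Lemma~\ref{L-GxiDclassBiQ}) so that $\cL_1^{\xi_D}$ is maximal integral, and then \eqref{k+k+} together with \cite[Prop.~2.7]{MS98} to pin down $\bK_{1,p}^{\xi_D *}$ as the full stabilizer of $\xi_D$ inside $\bK_{1,p}$. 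Once this local identification is in hand, everything else is formal manipulation of double cosets.
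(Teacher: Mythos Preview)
Your proposal contains a genuine structural error that would derail the argument. You claim $\sG_1^{\xi_D}=(\sG_1^{0})^{\xi_D}\times Z_1$ ``coming from $\sG_1=\sG_1^0\times Z_1$'', but this is false: the central element $-{\rm id}\in Z_1$ sends $\xi_D$ to $-\xi_D\neq\xi_D$, so $Z_1\cap\sG_1^{\xi_D}=\{1\}$. The correct picture is that $(\sG_1^{0})^{\xi_D}$ has index~$2$ in $\sG_1^{\xi_D}$, with the nontrivial coset represented by the element $\tilde\sigma$ constructed before Lemma~\ref{StabConnected-L} (an improper isometry of $V_1$ fixing $\xi_D$ and acting with determinant $-1$ on $\xi_D^\perp$). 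This propagates to your step~(2): the image $\ss\circ\iota(\widehat\cO_E^{\times})$ lands only in $(\sG_1^{0})^{\xi_D}(\A_\fin)\cap\bK_{1,\fin}$, which is a proper (index~$2$ at each prime) subgroup of $\bK_{1,\fin}^{\xi_D*}$, not all of it. So the map $\ss\circ\iota$ from $\A_{E,\fin}^\times$ is \emph{not} surjective onto $\sG_1^{\xi_D}(\A_\fin)$, and you cannot conclude bijectivity of double cosets by the route you sketch.

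What the paper does---and what your proposal is missing---is precisely Lemma~\ref{StabConnected-L}: the element $\tilde\sigma$ is simultaneously \emph{rational} (so $\tilde\sigma\in\sG_1^{\xi_D}(\Q)$) and lies in $\bK_{1,p}^{\xi_D*}$ for every prime $p$. Hence $\bK_{1,p}^{\xi_D*}=(\bK_{1,p}^{\xi_D*}\cap\sG_1^0(\Q_p))\,\{1,\tilde\sigma_p\}$, and the extra $\{1,\tilde\sigma\}$ on the right is absorbed by the left quotient by $\sG_1^{\xi_D}(\Q)=(\sG_1^{0})^{\xi_D}(\Q)\,\{1,\tilde\sigma\}$. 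This gives the last isomorphism in the paper's chain, reducing the full double coset to the connected one, which is then identified with $\A_{E,\fin}^\times/E^\times\A_\fin^\times\widehat\cO_E^\times\cong\A_{E,\fin}^\times/E^\times\widehat\cO_E^\times$ (the last step using that $\Q$ has class number~$1$). Your alternative route through Lemma~\ref{L-GxiDclassBiQ} and the Gauss correspondence would produce \emph{a} bijection of sets, but to show it is the one induced by $\ss\circ\iota$ you would again have to control how $\tilde\sigma$ interacts with both quotients---so you cannot avoid this point.
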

\begin{proof}
Let $p$ be a prime. From \eqref{embedE}, we have $\cO_{E,p}^{\times}=\iota^{-1}({\bf GL}_2(\Z_p))$. Since $\ss({\bf GL}_2(\Z_p))=\sG_1^{0}(\Q_p)\cap \bK_{1,p}$, we have
$$
\cO_{E,p}^{\times}/\Z_p^\times \cong (\sG_1^{0})^{\xi_D}(\Q_p)\cap \bK_{1,p}^{}=\bK_{1,p}^{\xi_D *}\cap \sG_1^0(\Q_p). 
$$
From Lemma~\ref{StabConnected-L}, there exists a $\tilde \sigma\in \sG^{\xi_D}_1(\Q)-(\sG_1^{\xi_D})^{0}(\Q)$ such that $\tilde \sigma_p \in \bK_1^{\xi_D *}$. 
\begin{align}
\bK_{p,1}^{\xi_D *}=\bK_{1,p}^{\xi_D *}\cap \sG_1^0(\Q_p)\,\{1,\tilde \sigma_p\}.
\label{L-CLd-stab-f1}
\end{align}
Since $\Q$ is of class number $1$, $\A^\times=\Q^\times \R_{>0}\prod_{p<\infty}\Z_p^\times$. We have
\begin{align*}
\A_{E}^\times/E^\times \,\C^\times\, \widehat{\cO_E}^{\times} 
&\cong \A_{E}^{\times}/E^\times\,\A^\times\, \C^\times\, \widehat{\cO_{E}}^{\times} \\
&\cong \A_{E,\fin}^{\times}/E^\times\,\A_{\fin}^\times\, \widehat{\cO_{E}}^{\times} 
\\
&\underset{\ss\circ\iota}{\cong} (\sG_1^{0})^{\xi_D}(\Q)\bsl (\sG_1^0)^{\xi_D}(\A_\fin)/\prod_{p<\infty}(\sG_1^0(\Q_p)\cap \bK_{1,p}^{\xi_D *}) 
\\
&\cong \sG_1^{\xi_D}(\Q)\bsl \sG_1^{\xi_D}(\A_\fin)/\bK_{1,\fin}^{\xi_D *}
\end{align*}
by using \eqref{L-CLd-stab-f1} to have the last isomorphism. 
\end{proof}

\begin{lem} \label{GxiD-class}
 Let $h_D$ be the class number of $E=\Q(\sqrt{D})$ and $J=\{u_1,\dots,u_{h_D}\}$ a complete set of representatives in $\A_{E,\fin}^\times $ modulo $E^{\times}\widehat{\cO_{E}}^\times$. Let $\sigma': j\mapsto \hat{j}$ be the involution of $J$ defined as $\bar {u_j}\equiv u_{\hat{j}} \pmod{E^\times \widehat {\cO_{E}}^\times}$. Let $\cJ$ be a complete set of representatives of $J/\{{\rm id},\sigma'\}$. Set $\tilde u_{j}=\ss\circ \iota(u_j)\in \sG_1^{\xi_D}(\A_\fin)$. Then $\{\tilde u_j\}_{j\in \cJ}$ yields a complete set of representatives of $\sG_1^{\xi_D}(\Q)\bsl \sG_1^{\xi_D}(\A_\fin)/\bK_{1,\fin}^{\xi_D *}$. Moreover, for $j\in \cJ$, 
$$
e_j:=\#(\sG_1^{\xi_D}(\Q)\cap \tilde u_j\bK_{1,\fin}^{\xi_D *}\tilde u_j^{-1})
=\{1+\delta(j=\hat j)\}\frac{w_{D}}{2},
$$
where $w_{D}=\# \cO_{E}^\times$ and the total volume of $\sG^{\xi_D}_1(\Q)\bsl \sG_1^{\xi_D}(\A_\fin)$ is 
$$
\mu_D:=\sum_{j\in \cJ}e_j^{-1}=\frac{h_D}{w_D}.
$$
\end{lem}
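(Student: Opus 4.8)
The plan is to transport the problem to the torus $(\sG_1^{0})^{\xi_D}$ via the isomorphism $\ss\circ\iota\colon E^{\times}/\Q^{\times}\xrightarrow{\sim}(\sG_1^{0})^{\xi_D}$ of \S\ref{sec:Tern}, the only delicate point being the index-two extension $(\sG_1^{0})^{\xi_D}\subsetneq\sG_1^{\xi_D}$ and the element $\tilde\sigma$, which by Lemma~\ref{StabConnected-L} lies both in $\sG_1^{\xi_D}(\Q)$ and in $\bK_{1,\fin}^{\xi_D *}$. The single geometric input I need is that conjugation by $\tilde\sigma$ acts on the commutative group $(\sG_1^{0})^{\xi_D}$ by inversion: $\tilde\sigma$ fixes $\xi_D$, and on the anisotropic plane $V_1^{\xi_D}$ it is an involution with one-dimensional fixed locus, hence a reflection, and conjugation by a reflection inverts ${\bf SO}$ of a binary quadratic space. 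Since on $E^{\times}/\Q^{\times}$ inversion coincides with the nontrivial Galois automorphism $\tau\mapsto\bar\tau$ (because $\tau\bar\tau=\nr(\tau)\in\Q^{\times}$), this yields, on adelic points, $\tilde\sigma\,\ss\circ\iota(x)\,\tilde\sigma^{-1}=\ss\circ\iota(\bar x)$ for all $x\in\A_{E,\fin}^{\times}$.

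For the first assertion, the argument in the proof of Lemma~\ref{L-CLd-stab} shows (together with $\A_{\Q,\fin}^{\times}=\Q^{\times}\widehat\Z^{\times}\subseteq E^{\times}\widehat{\cO_E}^{\times}$, valid since $\Q$ has class number one) that $\ss\circ\iota$ identifies the ``connected'' double coset space $(\sG_1^{0})^{\xi_D}(\Q)\bsl(\sG_1^{0})^{\xi_D}(\A_\fin)/\bigl(\bK_{1,\fin}^{\xi_D *}\cap(\sG_1^{0})^{\xi_D}(\A_\fin)\bigr)$ with $\A_{E,\fin}^{\times}/E^{\times}\widehat{\cO_E}^{\times}\cong{\rm Cl}_D$, so that $\{\ss\circ\iota(u_j)\}_{j\in J}$ represents the former. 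Because $\sG_1^{\xi_D}=(\sG_1^{0})^{\xi_D}\sqcup(\sG_1^{0})^{\xi_D}\tilde\sigma$ with $\tilde\sigma\in\sG_1^{\xi_D}(\Q)$, every double coset in $\sG_1^{\xi_D}(\Q)\bsl\sG_1^{\xi_D}(\A_\fin)/\bK_{1,\fin}^{\xi_D *}$ has a representative in $(\sG_1^{0})^{\xi_D}(\A_\fin)$, and for $g\in(\sG_1^{0})^{\xi_D}(\A_\fin)$ a short computation—reducing modulo $\sG_1^{\xi_D}/(\sG_1^{0})^{\xi_D}\cong\Z/2\Z$ and using $\tilde\sigma\in\sG_1^{\xi_D}(\Q)\cap\bK_{1,\fin}^{\xi_D *}$, $\tilde\sigma^{2}=1$ and the identity of the previous paragraph—shows that the $\sG_1^{\xi_D}$-double coset of $g$ is the union of the connected double cosets of $g$ and of $\tilde\sigma g\tilde\sigma^{-1}$. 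Hence $\sG_1^{\xi_D}(\Q)\bsl\sG_1^{\xi_D}(\A_\fin)/\bK_{1,\fin}^{\xi_D *}$ is the quotient of ${\rm Cl}_D$ by inversion; as $\bar u_j$ represents the class inverse to that of $u_j$, this quotient is indexed by $\cJ=J/\{{\rm id},\sigma'\}$, and $\{\tilde u_j\}_{j\in\cJ}$ is a complete set of representatives, the equalities $[\tilde u_j]=[\tilde u_{\hat j}]$ being the only coincidences.

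For $e_j$, put $S_j=\sG_1^{\xi_D}(\Q)\cap\tilde u_j\bK_{1,\fin}^{\xi_D *}\tilde u_j^{-1}$ and $S_j^{0}=S_j\cap(\sG_1^{0})^{\xi_D}(\Q)$, of index $\leq 2$ in $S_j$. Commutativity of $(\sG_1^{0})^{\xi_D}$ kills the conjugation by $\tilde u_j$, so $S_j^{0}=(\sG_1^{0})^{\xi_D}(\Q)\cap\bK_{1,\fin}^{\xi_D *}$, which under $\ss\circ\iota$ is the set of $\tau\in E^{\times}/\Q^{\times}$ whose fractional ideal is extended from $\Q$; as $\Z$ is a principal ideal domain such $\tau$ is a rational times a unit, so $S_j^{0}\cong\cO_E^{\times}/\{\pm1\}$ has order $w_D/2$. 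An element of $S_j\setminus S_j^{0}$ has the form $\ss\circ\iota(e_0)\tilde\sigma$, and using commutativity, $\tilde\sigma^{2}=1$, $\tilde\sigma\in\bK_{1,\fin}^{\xi_D *}$ and $\tilde\sigma\tilde u_j\tilde\sigma^{-1}=\ss\circ\iota(\bar u_j)$ the membership condition becomes $\ss\circ\iota(e_0\,u_j^{-1}\bar u_j)\in\bK_{1,\fin}^{\xi_D *}\cap(\sG_1^{0})^{\xi_D}(\A_\fin)$; an admissible $e_0$ exists iff $u_j^{-1}\bar u_j\in E^{\times}\widehat{\cO_E}^{\times}$, i.e.\ iff $j=\hat j$, and then the admissible $\ss\circ\iota(e_0)$ form a single coset of $S_j^{0}$. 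Thus $e_j=\#S_j=\{1+\delta(j=\hat j)\}\,w_D/2$. Finally, among the classes in $\cJ$ there are $\#{\rm Cl}_D[2]$ with $j=\hat j$ (each contributing $e_j^{-1}=1/w_D$) and $\tfrac12(h_D-\#{\rm Cl}_D[2])$ with $j\ne\hat j$ (each contributing $2/w_D$), whence $\mu_D=\sum_{j\in\cJ}e_j^{-1}=h_D/w_D$; its identification with $\vol(\sG_1^{\xi_D}(\Q)\bsl\sG_1^{\xi_D}(\A_\fin))$, for the Haar measure normalized by $\vol(\bK_{1,\fin}^{\xi_D *})=1$, is the usual orbit-counting identity for the finitely many double cosets with their finite stabilizers, legitimate because $\sG_1^{\xi_D}$ is $\Q$-anisotropic for $D<0$ (so the quotient is compact, $\sG_1^{\xi_D}(\R)$ is compact, and the $S_j$ are finite).

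The crux, and the only real obstacle, is the bookkeeping around the two-element group $\sG_1^{\xi_D}/(\sG_1^{0})^{\xi_D}$: the element $\tilde\sigma$, lying simultaneously in $\sG_1^{\xi_D}(\Q)$ and in $\bK_{1,\fin}^{\xi_D *}$, is precisely what collapses the $h_D$ ideal classes to the $\#\cJ$ double cosets and, over the ambiguous classes $j=\hat j$, doubles the stabilizer; getting both the indexing by $\cJ$ (not $J$) and the factor $1+\delta(j=\hat j)$ exactly right is the whole point, everything else being routine once the inversion identity for $\tilde\sigma$-conjugation is in place. Alternatively one may argue through Proposition~\ref{Prop1}, reading $e_j=\#(\Gamma_{Q_1})_\eta$ as the order of the full (proper plus improper) automorphism group of the primitive positive definite binary quadratic form attached to the class of $u_j$, which equals $2w_D$ when that form is ambiguous (i.e.\ $j=\hat j$) and $w_D$ otherwise.
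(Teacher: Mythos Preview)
Your proof is correct and follows essentially the same route as the paper's. Both arguments transport everything through $\ss\circ\iota$ to $E^{\times}/\Q^{\times}$, use that $\tilde\sigma$ lies simultaneously in $\sG_1^{\xi_D}(\Q)$ and in $\bK_{1,\fin}^{\xi_D *}$, and compute $e_j$ by the two-case analysis ``connected part'' versus ``$\tilde\sigma$-part'', the latter being nonempty precisely when $j=\hat j$. The only cosmetic differences are that the paper packages the semidirect product by formally extending $\ss\circ\iota$ via $(\ss\circ\iota)(\sigma)=\tilde\sigma$ and then simply quotes $\sigma u_j\sigma=\bar u_j$, whereas you derive the equivalent identity $\tilde\sigma\,\ss\circ\iota(x)\,\tilde\sigma^{-1}=\ss\circ\iota(\bar x)$ geometrically (reflection conjugation inverts ${\bf SO}$ of a plane, and inversion equals Galois on $E^{\times}/\Q^{\times}$); and you spell out the first assertion on representatives more fully than the paper, which leans on Lemma~\ref{L-CLd-stab}.
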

\begin{proof} Recall that $\sG_1^{\xi_D}(\Q)={\Im }{\ss}\,\{1,\tilde \sigma\}$. Let $\sigma$ denote the non-trivial automorphism of $E/\Q$. The embedding $\ss\circ \iota$ from $E^\times/\Q^\times$ to $\sG_1^{\xi_D}$ is extended to $E^\times/\Q^\times\,\{1,\sigma\}$ by setting $(\ss\circ \iota)(\sigma)=\tilde \sigma$. Let $h=(\ss\circ\iota(t\tau))$ with $t\in {E}^{\times}/\Q^\times $ and $\tilde{\tau}\in \{1, \sigma\}$. Then $h \in \tilde u_j \bK_{\fin}^{\xi_D *}\tilde u_j^{-1}$ if and only if
 $$
u_j(t\tau)u_j^{-1} \in \widehat{\cO_E}^{\times}\,\Sigma,
$$
where $\Sigma=\prod_{p<\infty}\{1,\sigma_p\}$ with $\sigma_p$ a copy of $\sigma$ identified with the unique non-trivial automorphism of $E_p=E\otimes_{\Q}\Q_p$ over $\Q_p$. Since $\sigma u_j \sigma=\bar u_j$, this is equivalent to 
\begin{itemize}
\item[(i)] $\tau=1, \quad t\in \widehat {\cO_{E}}^{\times} $, or 
\item[(ii)] $\tau=\sigma, \quad t u_j\bar u_j^{-1} \in \widehat {\cO_E}^{\times}$.
\end{itemize}
When we have the case (i), then $t\in \cO_E^\times/\{\pm 1\}$. The case (ii) happens if and only if $u_j {\bar u_j}^{-1}\in E^\times {\widehat \cO_E}^{\times}$, or equivalently $j= \hat{j}$; then $t \in \cO_{E}^\times/\{\pm 1\}$. Hence $e_j=\{1+\delta(j=\hat j)\}w_D/2$. We have 
$$
\sum_{j \in \cJ}(1/e_j)=2w_D^{-1}\biggl(\#\{j\in \cJ|\,j\not=\hat j\}+\tfrac{1}{2}\#\{j\in \cJ|\,j=\hat j\} \biggr)
=\frac{h_D}{w_D}. 
$$
\end{proof}

Let $\cV(\xi_D)$ be the space of all those smooth functions on $\sG_1^{\xi_D}(\A)$ such that $f(\delta h u_\infty)=f(h)$ for all $\delta\in \sG_1^{\xi_D}(\Q)$, $h \in \sG_1^{\xi_D}(\A)$ and $u_\infty \in \sG_1^{\xi_D}(\R)$. Let $\cV(\xi_D;\bK_{1,\fin}^{\xi_D*})$ be the space of $\bK_{1,\fin}^{\xi_D*}$-fixed vectors in $\cV(\xi_D)$. Since $2\xi_D\in \cL_{1}$, an involutive operator $\tau_\fin^{\xi_D}$ on $\cV(\xi_D,\bK_{1,\fin}^{\xi_D*})$ is defined as $[\tau_{\fin}^{\xi_D}f](h)=f(h h_\fin^{\xi_D})$ with $h_{\fin}^{\xi_D}\in \sG_1^{\xi_D}(\A_\fin)$ any element such that ${\mathsf r}^{\xi_D}\in h_{\fin}^{\xi_D} \bK_{1,\fin}^{*}$ where ${\mathsf r}^{\xi_D}$ is the reflection of $V_1$ with respect to the vector $\xi_D$ (see \cite[\S2.9]{Tsud2019}). 

\begin{lem} \label{L:tauxiD}
$\tau_{\fin}^{\xi_D}$ is the identity map. 
\end{lem}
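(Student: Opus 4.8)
The plan is to show that the operator $\tau_\fin^{\xi_D}$ acts trivially on $\cV(\xi_D;\bK_{1,\fin}^{\xi_D*})$ by proving that the relevant element $h_\fin^{\xi_D}$ can be chosen inside $\bK_{1,\fin}^{\xi_D*}$ itself, so that right-translation by it is the identity on $\bK_{1,\fin}^{\xi_D*}$-fixed vectors. First I would recall that ${\mathsf r}^{\xi_D}$ is the reflection $X\mapsto X-\tfrac{2Q_1(X,\xi_D)}{Q_1(\xi_D)}\xi_D$ of $V_1$ with respect to $\xi_D$; since $2\xi_D\in\cL_1$ and $\xi_D\in\cL_1^*$ (as $\xi_D$ is reduced, by Lemma~\ref{L-GxiDclassBiQ}), this reflection lies in $\sG_1(\Q)$ but not necessarily in $\sG_1^{\xi_D}(\Q)$. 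However, it fixes $\xi_D$ (a reflection with respect to $\xi_D$ sends $\xi_D\mapsto -\xi_D$, so one must instead take the \emph{point reflection} in the hyperplane $\xi_D^\perp$, i.e.\ ${\mathsf r}^{\xi_D}$ restricted to act as $-1$ on $\Q\xi_D$ composed with $-\mathrm{id}$, or equivalently use $-{\mathsf r}^{\xi_D}\in\sG_1$ which equals $c^{\sG_1}{\mathsf r}^{\xi_D}$ and fixes $V_1^{\xi_D}$ pointwise while sending $\xi_D\mapsto\xi_D$). Concretely, since $\sG_1=\sG_1^0\times Z_1$ with $Z_1=\langle c^{\sG_1}\rangle$, the element $-{\mathsf r}^{\xi_D}$ (the reflection in the hyperplane $V_1^{\xi_D}$) lies in $\sG_1^{\xi_D}(\Q)$ and represents the non-trivial coset of $(\sG_1^{\xi_D})^0(\Q)$ in $\sG_1^{\xi_D}(\Q)$; in fact I expect $-{\mathsf r}^{\xi_D}=\tilde\sigma$, the element constructed before Lemma~\ref{StabConnected-L}.

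Granting this identification, the key computation is already done: Lemma~\ref{StabConnected-L} states precisely that $\tilde\sigma_p\in\bK_{1,p}^{\xi_D*}$ for every $p<\infty$, hence $\tilde\sigma_\fin\in\bK_{1,\fin}^{\xi_D*}$. Now, whichever of ${\mathsf r}^{\xi_D}$ or $-{\mathsf r}^{\xi_D}$ is the element that intervenes in the definition of $\tau_\fin^{\xi_D}$ (they differ by $c^{\sG_1}$, which lies in $Z_1(\A_\fin)\subset\bK_{1,\fin}$ and acts trivially since $\bK_{1,p}=\bK_{1,p}^*$ for all $p$), we get that ${\mathsf r}^{\xi_D}\in\tilde\sigma_\fin\,\bK_{1,\fin}^*$ — indeed the condition ${\mathsf r}^{\xi_D}\in h_\fin^{\xi_D}\bK_{1,\fin}^*$ determines $h_\fin^{\xi_D}$ only up to right multiplication by $\bK_{1,\fin}^*\cap\sG_1^{\xi_D}(\A_\fin)=\bK_{1,\fin}^{\xi_D*}$ by \eqref{k+k+}. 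So one legitimate choice is $h_\fin^{\xi_D}=\tilde\sigma_\fin\in\bK_{1,\fin}^{\xi_D*}$. Then for any $f\in\cV(\xi_D;\bK_{1,\fin}^{\xi_D*})$ and any $h\in\sG_1^{\xi_D}(\A)$ we have $[\tau_\fin^{\xi_D}f](h)=f(h\tilde\sigma_\fin)=f(h)$, using the $\bK_{1,\fin}^{\xi_D*}$-invariance of $f$ on the right. Hence $\tau_\fin^{\xi_D}=\mathrm{id}$.

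The main obstacle is the bookkeeping of signs and centers: pinning down exactly which orthogonal transformation (the hyperplane reflection $-{\mathsf r}^{\xi_D}$, which fixes $\xi_D$, versus the "reflection with respect to the vector $\xi_D$" ${\mathsf r}^{\xi_D}$, which negates it) is meant in the definition in \cite[\S2.9]{Tsud2019}, and verifying that the two differ only by the central element $c^{\sG_1}$, which is harmless here because $\bK_{1,p}=\bK_{1,p}^*$ forces $c^{\sG_1}_p\in\bK_{1,p}^*$. Once that is sorted, the statement is essentially a restatement of Lemma~\ref{StabConnected-L} together with \eqref{k+k+}: the element implementing $\tau_\fin^{\xi_D}$ can be taken to be $\tilde\sigma_\fin$, which stabilizes $\cL_{1,\fin}^{\xi_D}$ modulo nothing, i.e.\ lies in $\bK_{1,\fin}^{\xi_D*}$, so it acts as the identity on $\bK_{1,\fin}^{\xi_D*}$-fixed vectors. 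I would also double-check the hypothesis $2\xi_D\in\cL_1$ is what guarantees ${\mathsf r}^{\xi_D}$ (or its central twist) is $\Q$-rational and integral at every prime, matching the setup under which $h_\fin^{\xi_D}$ is defined in the first place.
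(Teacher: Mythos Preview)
Your overall strategy---write ${\mathsf r}^{\xi_D}$ as a product of an element of $\sG_1^{\xi_D}$ with the central element $c^{\sG_1}={\mathsf c}_1\in\bK_{1,\fin}^*$---is exactly the paper's, but your identification $-{\mathsf r}^{\xi_D}=\tilde\sigma$ is false, and this is where the argument breaks. The element $-{\mathsf r}^{\xi_D}$ fixes $\xi_D$ and acts as $-\mathrm{id}$ on the \emph{two}-dimensional space $V_1^{\xi_D}$; thus its restriction to $V_1^{\xi_D}$ has determinant $+1$, so $-{\mathsf r}^{\xi_D}$ lies in the identity component $(\sG_1^{\xi_D})^0$, not in the nontrivial coset. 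In fact $-{\mathsf r}^{\xi_D}$ is the nontrivial \emph{central} element ${\mathsf c}^{\xi_D}$ of $\sG_1^{\xi_D}$. By contrast, $\tilde\sigma$ acts on $V_1^{\xi_D}$ by $[y,z]_\xi\mapsto[y,-z-Q_1(\alpha,y)]_\xi$, a reflection with determinant $-1$, so it sits in the other component. Hence Lemma~\ref{StabConnected-L} (which concerns $\tilde\sigma$) does not tell you that $-{\mathsf r}^{\xi_D}\in\bK_{1,\fin}^{\xi_D*}$, and your right-invariance conclusion is unjustified.

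The paper avoids this issue entirely. It writes ${\mathsf r}^{\xi_D}={\mathsf c}^{\xi_D}\,{\mathsf c}_1$, checks ${\mathsf c}_1\in\bK_{1,\fin}^*$ via $2\cL_1^*\subset\cL_1$ (this is your observation about $c^{\sG_1}$), and then takes $h_\fin^{\xi_D}={\mathsf c}^{\xi_D}$. But instead of trying to prove ${\mathsf c}^{\xi_D}\in\bK_{1,\fin}^{\xi_D*}$ (which would need $2\cL_1^{\xi_D*}\subset\cL_1^{\xi_D}$, not supplied anywhere), it uses that ${\mathsf c}^{\xi_D}$ is \emph{central} in $\sG_1^{\xi_D}(\A)$ and rational: for $f\in\cV(\xi_D;\bK_{1,\fin}^{\xi_D*})$ one has $f(h\,{\mathsf c}^{\xi_D})=f({\mathsf c}^{\xi_D}h)=f(h)$ by the left $\sG_1^{\xi_D}(\Q)$-automorphy of $f$. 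So the mechanism is centrality plus left invariance, not right $\bK_{1,\fin}^{\xi_D*}$-invariance.
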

\begin{proof}
Let ${\mathsf c}^{\xi_D}$ (resp. ${\mathsf c}_1$) be the non-trivial elements of the center of $\sG_1^{\xi_D}(\Q)$ (resp. $\sG_1(\Q)$). Then ${\mathsf r}^{\xi_D}={\mathsf c}^{\xi_D}{\mathsf c}_1$. We claim that ${\mathsf c}_1$ viewed as an element of $\sG_1(\A_\fin)$ belongs to $\bK_{1,\fin}^{*}$. Indeed, since $2\cL_1^{*}\subset \cL_1$ by \eqref{cL1star}, we have ${\mathsf c}_1(X)-X=-X-X=-2X\in \cL_{1}$ for all $X\in \cL_{1}^{*}$. Therefore for $f\in \cV(\xi_D,\bK_{1,\fin}^{\xi_D*})$, we have $[\tau_{\fin}^{\xi_D}f](h)=f(h{\mathsf c}^{\xi_D})=f({\mathsf c}^{\xi_D}h)$, which equals to $f(h)$ due to ${\mathsf c}^{\xi_D}\in \sG_1^{\xi_D}(\Q)$ and to the automorphy of $f$. 
\end{proof}

Set ${\rm E}_p(\xi_D):=\bK_{1,p}^{\xi_D}/\bK_{1,p}^{\xi_D*}$ for a prime number $p$. 
\begin{lem} \label{Lem:Ep}
If $p$ is inert or splits in $\Q(\sqrt{D})/\Q$, then ${\rm E}_p(\xi_D)=\{1\}$. If $p$ ramifies in $\Q(\sqrt{D})/\Q$, then ${\rm E}_p(\xi_D)\cong \Z/2\Z$. 
\end{lem}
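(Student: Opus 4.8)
The plan is to reduce ${\rm E}_p(\xi_D)$ to the special-orthogonal part and then identify it with a Tate cohomology group of local units of $E_p:=E\otimes_\Q\Q_p$. Write $C=\mathrm{Gal}(E/\Q)=\{1,\sigma\}$. Since $\sG_1=\sG_1^0\times Z_1$, one has the index-two decomposition $\sG_1^{\xi_D}(\Q_p)=(\sG_1^0)^{\xi_D}(\Q_p)\sqcup\tilde\sigma_p(\sG_1^0)^{\xi_D}(\Q_p)$, and by Lemma~\ref{StabConnected-L} both $\bK_{1,p}^{\xi_D}$ and $\bK_{1,p}^{\xi_D*}$ contain $\tilde\sigma_p$. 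Because $\bK_{1,p}^{\xi_D*}$ is normal in $\bK_{1,p}^{\xi_D}$ (the defining condition $(h-1)\cL_{1,p}^{\xi_D*}\subseteq\cL_{1,p}^{\xi_D}$ is conjugation-stable), the inclusion of the connected parts yields an isomorphism
\begin{align*}
{\rm E}_p(\xi_D)\ \cong\ \mathbf{A}_p/\mathbf{B}_p,\qquad \mathbf{A}_p:=\bK_{1,p}^{\xi_D}\cap(\sG_1^0)^{\xi_D}(\Q_p),\ \ \mathbf{B}_p:=\bK_{1,p}^{\xi_D*}\cap(\sG_1^0)^{\xi_D}(\Q_p).
\end{align*}

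Next I would transport this picture through $\ss\circ\iota$. Using $Q_1(X,Y)=\tr(XY)$ on $V_1$ together with the explicit $\xi_D$, a direct computation shows that $V_1^{\xi_D}(\Q_p)$ may be identified with $E_p$ so that $Q_1^{\xi_D}$ becomes $2\,\nr_{E/\Q}$, the group $(\sG_1^0)^{\xi_D}(\Q_p)\cong E_p^\times/\Q_p^\times$ acts by multiplication (the class of $\tau$ acting by $\bar\tau/\tau\in E_p^1$), and $\cL_{1,p}^{\xi_D}=\cL_{1,p}\cap V_1^{\xi_D}$ becomes $\cO_{E,p}$ itself. Hence $\mathbf{A}_p=\{s\in E_p^1:s\cO_{E,p}=\cO_{E,p}\}$ equals $\cO_{E,p}^1$, the group of norm-one units, while by the computation in the proof of Lemma~\ref{L-CLd-stab}, $\mathbf{B}_p=(\sG_1^0)^{\xi_D}(\Q_p)\cap\bK_{1,p}$ corresponds to $\{\bar u/u:u\in\cO_{E,p}^\times\}=(\sigma-1)\cO_{E,p}^\times$.

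Therefore ${\rm E}_p(\xi_D)\cong\cO_{E,p}^1/(\sigma-1)\cO_{E,p}^\times=\widehat H^{-1}(C,\cO_{E,p}^\times)$, a Tate cohomology group. If $p$ splits, $C$ permutes the two factors of $\cO_{E,p}^\times=\Z_p^\times\times\Z_p^\times$ (a coinduced $C$-module), so all Tate cohomology vanishes and ${\rm E}_p(\xi_D)=\{1\}$. If $p$ is inert, $E_p/\Q_p$ is unramified, the norm $\cO_{E,p}^\times\to\Z_p^\times$ is surjective, so $\widehat H^0(C,\cO_{E,p}^\times)=\Z_p^\times/\nr(\cO_{E,p}^\times)$ is trivial, and since the Herbrand quotient of local units equals $1$, $\widehat H^{-1}$ is trivial as well: ${\rm E}_p(\xi_D)=\{1\}$. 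If $p$ ramifies, local class field theory gives $[\Q_p^\times:\nr(E_p^\times)]=2$; since $\nr(\varpi_{E_p})$ has $p$-valuation $1$ the valuation map is already surjective on $\nr(E_p^\times)$, which forces $[\Z_p^\times:\nr(\cO_{E,p}^\times)]=2$, hence $\#\widehat H^{-1}(C,\cO_{E,p}^\times)=\#\widehat H^0(C,\cO_{E,p}^\times)=2$ and ${\rm E}_p(\xi_D)\cong\Z/2\Z$. The step most in need of care is the explicit identification in the second paragraph — in particular that $\cL_{1,p}^{\xi_D}$ is exactly the maximal order $\cO_{E,p}$, which is what makes $\mathbf{A}_p=\cO_{E,p}^1$ uniformly in $p$ (including $p=2$, where arguments relying on $\cO_{E,p}^\times$ generating $\cO_{E,p}$ over $\Z_p$ break down); the remainder is a routine unit-index computation.
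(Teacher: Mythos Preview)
Your argument is correct, and it is more complete than the paper's own proof, which only writes out the ramified case explicitly. The two routes differ as follows.

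The paper, in the ramified case, uses that $\sG_1^{\xi_D}(\Q_p)$ is compact, so $\bK_{1,p}^{\xi_D}=\sG_1^{\xi_D}(\Q_p)\cong(E_p^\times/\Q_p^\times)\rtimes{\rm Gal}(E_p/\Q_p)$, while $\bK_{1,p}^{\xi_D*}\cong(\cO_{E,p}^\times/\Z_p^\times)\rtimes{\rm Gal}(E_p/\Q_p)$ from Lemma~\ref{L-CLd-stab}. Hence ${\rm E}_p(\xi_D)\cong E_p^\times/\Q_p^\times\cO_{E,p}^\times$, visibly represented by $\{1,\varpi_p\}$. (The split and inert cases, where $E_p^\times=\Q_p^\times\cO_{E,p}^\times$, are left implicit.) Your approach instead passes through the Hilbert~90 isomorphism $E_p^\times/\Q_p^\times\cong E_p^1$, identifies ${\rm E}_p(\xi_D)$ with $\widehat H^{-1}(C,\cO_{E,p}^\times)$, and then invokes the Herbrand quotient of local units together with local class field theory. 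The two descriptions agree: under $\tau\mapsto\bar\tau/\tau$, the paper's quotient $E_p^\times/\Q_p^\times\cO_{E,p}^\times$ becomes exactly $\cO_{E,p}^1/(\sigma-1)\cO_{E,p}^\times$ (note that $E_p^1=\cO_{E,p}^1$ in the field cases). What your approach buys is a uniform treatment of all three cases and an honest handling of the split case, where $\bK_{1,p}^{\xi_D}\subsetneq\sG_1^{\xi_D}(\Q_p)$; what the paper's approach buys is that it avoids the lattice identification $\cL_{1,p}^{\xi_D}\cong\cO_{E,p}$ that you rightly flag as the delicate step.

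One expository quibble: the sentence ``Since $\sG_1=\sG_1^0\times Z_1$, one has the index-two decomposition\ldots'' is slightly misleading, since $-1\in Z_1$ does not fix $\xi_D$. What you actually need (and use) is that $\tilde\sigma_p$ has determinant $-1$ on $V_1$, so it lies in $\sG_1^{\xi_D}(\Q_p)\setminus(\sG_1^0)^{\xi_D}(\Q_p)$, giving the coset decomposition. This does not affect the argument.
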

\begin{proof}
If $E_p=\Q_p(\sqrt{D})$ is a ramified field extension of $\Q_p$, then $\bK_{1,p}^{\xi_D}=\sG_1^{\xi_D}(\Q_p)\cong (E_p^\times/\Q_p^\times) \rtimes {\rm Gal}(E_p/\Q_p)$ and $\bK_{1,p}^{\xi_D*}\cong (\cO_{E,p}^{\times}/\Z_p^\times)\rtimes {\rm Gal}(E_p/\Q_p)$ from the proof of Lemma~\ref{L-CLd-stab}. Let $\varpi_p$ be a prime element of $E_p$; then ${\rm E}_p(\xi_D)\cong E_p^\times/\Q_p^\times \cO_{E,p}^\times$ is represented by the class of $1$ and $\varpi_p$. Thus ${\rm E}_p(\xi_D)\cong \Z/2\Z$. 
\end{proof}

For a unitary character $\chi$ of the finite group $\A_{E,\fin}^{\times}/E^\times \widehat {\cO_E}^{\times}\cong {\rm Cl}_D$, define a function $f_\chi$ on $\sG_1^{\xi_D}(\A_\fin)\cong (\A_{E,\fin}^{\times}/\A_{\fin}^{\times})\rtimes \Sigma$ by setting 
\begin{align}
f_{\chi}(\ss\circ\iota(t\tau))=\tfrac{1}{2}\{\chi(t)+\chi(\bar t)\}, \quad t\in \A_{E,\fin}^{\times}, \tau\in \Sigma:=\prod_{p<\infty}\{1,\sigma_p\}. 
 \label{def:fchi}
\end{align}

\begin{lem} \label{L:IrredDecomp-cV}
The function $f_\chi$ belongs to the space $\cV({\xi_D};\bK_{1,\fin}^{\xi_D *})$ and is a joint eigenfunction of the Hecke algebra $\cH^{+}(\sG_1^{\xi_D}(\A_\fin)\sslash\bK_{1,\fin}^{\xi_D *})$. Let $\widehat{{\rm Cl}_D}/{\rm Gal}(E/\Q)$ be the Galois equivalence classes in $\widehat{{\rm Cl}_D}$. The set of functions $f_{\chi}\,(\chi \in \widehat {\rm {Cl}_D}/{\rm Gal}(E/\Q))$ forms an orthogonal basis of $\cV({\xi_D};\bK_{1,\fin}^{\xi_D *})$ such that
\begin{align*}
\|f_\chi\|_{\sG_1^{\xi_D}}^{2}=\frac{h_D}{2w_D} \times \{1+\delta(\chi^{2}={\bf 1})\}.
\end{align*}
Let $\cU_\chi$ be the $\sG_1^{\xi_D}(\A_\fin)$-submodule generated by $f_\chi$. Then $\cU_{\chi}$ is irreducible and the space of $\bK_{1,\fin}^{\xi_D*}$-fixed vectors in $\cU_\chi$ coincides with $\C f_\chi$. The map $\chi\mapsto \cU_\chi$ yields a bijection between $\widehat{{\rm Cl}_D}/{\rm Gal}(E/\Q)$ and the set of all the irreducible $\sG_1^{\xi_D}(\A_\fin)$-submodules in $\cV(\xi_D)$ with $\bK_{1,\fin}^{\xi_D*}$-fixed vectors. The $L$-function $L_\fin(s,\cU_\chi)$ of $\cU_\chi$ coincides with Hecke's $L$-function $L_\fin(s,{\mathcal{AI}}(\chi))$ of ${\mathcal{AI}}(\chi)$. If $\chi={\bf 1}$ is the trivial character, then $L_\fin(s,\cU_{{\bf 1}})=\zeta(s)\,L_\fin(s,\eta_D)$. 
\end{lem}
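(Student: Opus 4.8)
\emph{Plan of proof (in four steps).}

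\medskip\noindent\textbf{Set-up and well-definedness.} I would work throughout with the identification $\sG_1^{\xi_D}(\A_\fin)\cong(\A_{E,\fin}^{\times}/\A_{\fin}^{\times})\rtimes\Sigma$ of Lemma~\ref{L-CLd-stab} (with $\Sigma=\prod_{p<\infty}\{1,\sigma_p\}$), under which $\sG_1^{\xi_D}(\Q)$ becomes $(E^{\times}/\Q^{\times})\rtimes\langle\sigma\rangle$, $\bK_{1,\fin}^{\xi_D*}$ becomes $(\widehat{\cO_E}^{\times}/\widehat\Z^{\times})\rtimes\Sigma$, and $\Sigma\subseteq\bK_{1,\fin}^{\xi_D*}$ by Lemma~\ref{StabConnected-L}. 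For $f_\chi$: since $\chi$ is trivial on $E^{\times}$, on $\widehat{\cO_E}^{\times}$, and on $\A_{\fin}^{\times}$ (the last because $\Q$ has class number one, so every finite idele of $\Q$ generates a principal ideal of $E$), the assignment $\ss\circ\iota(t\tau)\mapsto\tfrac12(\chi(t)+\chi(\bar t))$ descends to $\sG_1^{\xi_D}(\Q)\bsl\sG_1^{\xi_D}(\A_\fin)/\bK_{1,\fin}^{\xi_D*}$, the Galois average also securing invariance under right translation by $\tilde\sigma_\fin$; hence $f_\chi\in\cV(\xi_D;\bK_{1,\fin}^{\xi_D*})$, and $f_\chi=f_{\chi^{-1}}$ as $\chi(\bar t)=\chi(t)^{-1}$. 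Because $\Sigma\subseteq\bK_{1,\fin}^{\xi_D*}$, restriction to $(\sG_1^{0})^{\xi_D}(\A_\fin)$ identifies $\cV(\xi_D;\bK_{1,\fin}^{\xi_D*})$ with the functions on ${\rm Cl}_D$ invariant under $[\fa]\mapsto[\bar\fa]$; its dimension is $\#(\widehat{{\rm Cl}_D}/{\rm Gal}(E/\Q))$ and $f_\chi$ corresponds to $\tfrac12(\chi+\chi^{-1})$, so once orthogonality is in hand the $f_\chi$ form a basis by a dimension count.

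\medskip\noindent\textbf{Orthogonality and norms.} I would unfold the inner product over the orbit decomposition of Lemma~\ref{GxiD-class}: with ${\rm vol}(\bK_{1,\fin}^{\xi_D*})=1$ one gets $\langle f_\chi,f_{\chi'}\rangle_{\sG_1^{\xi_D}}=\sum_{j\in\cJ}e_j^{-1}f_\chi(\tilde u_j)f_{\chi'}(\tilde u_j)$, where $f_\chi(\tilde u_j)=\Re\chi(u_j)$ is real and $e_j=\{1+\delta(j=\hat j)\}w_D/2$ with $j=\hat j$ iff $[u_j]^{2}={\bf 1}$. On each $\langle\sigma\rangle$-orbit $\{[u_j],[u_j]^{-1}\}$ the weight $e_j^{-1}$ exactly compensates the orbit size against the Galois-symmetric summand, so the sum collapses to $\tfrac1{w_D}\sum_{a\in{\rm Cl}_D}\Re\chi(a)\,\Re\chi'(a)$; expanding $4\,\Re\chi(a)\Re\chi'(a)=(\chi(a)+\chi^{-1}(a))(\chi'(a)+\chi'^{-1}(a))$ and invoking orthogonality of characters of ${\rm Cl}_D$ yields $\tfrac{h_D}{2w_D}\{\delta(\chi=\chi')+\delta(\chi=\chi'^{-1})\}$. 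This gives both the orthogonality of $\{f_\chi\}_{\chi\in\widehat{{\rm Cl}_D}/{\rm Gal}(E/\Q)}$ and the stated norm, hence the basis claim.

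\medskip\noindent\textbf{The representation-theoretic statements.} These I would reduce to two facts. First, $\cH^{+}(\sG_1^{\xi_D}(\A_\fin)\sslash\bK_{1,\fin}^{\xi_D*})$ is commutative: at an inert prime it is $\C$; at a ramified prime $p\mid D$, $\bK_{1,p}^{\xi_D*}$ is normal in $\sG_1^{\xi_D}(\Q_p)$ (its connected part $\cO_{E,p}^{\times}/\Z_p^{\times}$ is Galois-stable in the abelian group $E_p^{\times}/\Q_p^{\times}$), so the local algebra is the group algebra of $\bK_{1,p}^{\xi_D}/\bK_{1,p}^{\xi_D*}\cong\Z/2\Z$; at a split prime $(\sG_1^{\xi_D}(\Q_p),\bK_{1,p}^{\xi_D*})\cong(\Q_p^{\times}\rtimes\Z/2\Z,\ \Z_p^{\times}\rtimes\Z/2\Z)$ is a Gelfand pair, the anti-automorphism $g\mapsto g^{-1}$ fixing every double coset. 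Hence every $\bK_{1,\fin}^{\xi_D*}$-spherical irreducible representation of $\sG_1^{\xi_D}(\A_\fin)$ has one-dimensional $\bK_{1,\fin}^{\xi_D*}$-invariants and is determined by its $\cH^{+}$-character. Second, $f_\chi$ is a $\cH^{+}$-eigenfunction: for $\phi\in\cH^{+}$, bi-invariance under $\Sigma\subseteq\bK_{1,\fin}^{\xi_D*}$ forces $\phi(\ss\circ\iota(u\rho))$ to be independent of $\rho\in\Sigma$ and to satisfy $\phi(\tau(u))=\phi(u)$ for all $\tau\in\Sigma$; computing $\int\phi(g)f_\chi(hg)\,dg$ for $h=\ss\circ\iota(t\tau)$ gives $\Re\!\big(\chi(t)\int_{(\sG_1^{0})^{\xi_D}(\A_\fin)}\phi(u)\,\chi(\tau(u))\,du\big)$, and the substitution $u\mapsto\tau(u)$ — using $\tau^{2}={\rm id}$ and $\phi(\tau(u))=\phi(u)$ — shows the inner integral equals $\widehat\phi(\chi):=\int\phi(u)\chi(u)\,du$ for every $\tau$; one then checks that $\phi$ acts on $f_\chi$ by a scalar (treating $\chi^{2}={\bf 1}$ and $\chi^{2}\ne{\bf 1}$ separately, the latter using $f_\chi=f_{\chi^{-1}}$ together with $\widehat\phi(\chi)=\widehat\phi(\chi^{-1})$, which comes from $\phi(\bar u)=\phi(u)$, to force $\widehat\phi(\chi)\in\R$). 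Distinct classes in $\widehat{{\rm Cl}_D}/{\rm Gal}(E/\Q)$ give distinct $\cH^{+}$-characters, since at a split $p$ the eigenvalue recovers $\{\chi([\fp]),\chi([\fp])^{-1}\}$ and such $[\fp]$ exhaust ${\rm Cl}_D$ by Chebotarev; thus $\{f_\chi\}$ is the $\cH^{+}$-eigenbasis of $\cV(\xi_D;\bK_{1,\fin}^{\xi_D*})$. Now $\cU_\chi$ is generated by the $\bK$-fixed vector $f_\chi$, so $\cU_\chi^{\bK_{1,\fin}^{\xi_D*}}=\cH^{+}\cdot f_\chi=\C f_\chi$; every irreducible constituent of $\cU_\chi$ is $\bK_{1,\fin}^{\xi_D*}$-spherical (a constituent with vanishing $\bK$-invariants receives a zero contribution from $f_\chi$ and hence does not occur), so each has one-dimensional $\bK$-invariants, whence $\dim\cU_\chi^{\bK}=1$ forces $\cU_\chi$ irreducible with $\bK$-invariants exactly $\C f_\chi$. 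The map $\chi\mapsto\cU_\chi$ is injective by the separation of $\cH^{+}$-characters, and surjective because any $\bK_{1,\fin}^{\xi_D*}$-spherical irreducible $\pi\subseteq\cV(\xi_D)$ has $\pi^{\bK_{1,\fin}^{\xi_D*}}$ a one-dimensional $\cH^{+}$-eigenline inside $\bigoplus_\chi\C f_\chi$, so $\pi^{\bK_{1,\fin}^{\xi_D*}}=\C f_\chi$ for some $\chi$ and $\pi=\cU_\chi$.

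\medskip\noindent\textbf{The $L$-functions, and the main difficulty.} Finally I would match, prime by prime, the $\cH^{+}$-eigenvalues of $f_\chi$ with the Satake parameters of ${\mathcal{AI}}(\chi)_p$: at an inert $p$, $\chi_p$ is trivial (unramified and trivial on $\Q_p^{\times}$) and both sides contribute $(1-p^{-2s})^{-1}$; at a split $p$ the eigenvalue is $\chi_{p,1}(p)+\chi_{p,1}(p)^{-1}$ for the component $\chi_{p,1}$ of $\chi_p$ under $E_p\cong\Q_p\times\Q_p$, matching ${\mathcal{AI}}(\chi)_p=\chi_{p,1}\boxplus\chi_{p,1}^{-1}$; the ramified primes $p\mid D$ are matched by the corresponding local automorphic-induction factor. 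Hence $L_\fin(s,\cU_\chi)=L_\fin(s,{\mathcal{AI}}(\chi))$, and for $\chi={\bf 1}$, since ${\bf 1}={\bf 1}_\Q\circ\nr_{\Q(\sqrt D)/\Q}$ one has ${\mathcal{AI}}({\bf 1})={\bf 1}_\Q\boxplus\eta_D$, giving $L_\fin(s,\cU_{\bf 1})=\zeta(s)L_\fin(s,\eta_D)$. The step I expect to be the main obstacle is the second fact in the previous paragraph — showing $f_\chi$ is genuinely a $\cH^{+}$-eigenfunction despite $\sG_1^{\xi_D}(\A_\fin)$ carrying the large profinite group $\Sigma$ as the ``component group'' of its adelic points; what saves the computation is precisely that $\Sigma$ is $2$-torsion and sits inside $\bK_{1,\fin}^{\xi_D*}$, so the change of variables $u\mapsto\tau(u)$ collapses all $\Sigma$-dependence — and, in the same vein, establishing the commutativity of $\cH^{+}$ at the split primes.
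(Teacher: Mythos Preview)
Your proof is correct and complete, but it diverges from the paper's in two of the four steps, so a brief comparison is warranted.

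For the inner product, the paper does not pass through the orbit decomposition of Lemma~\ref{GxiD-class}. Instead it integrates directly over $\sG_1^{\xi_D}(\Q)\bsl\sG_1^{\xi_D}(\A_\fin)$ by pulling back to the two-to-one cover $(\sG_1^{\xi_D})^{0}(\Q)\bsl\sG_1^{\xi_D}(\A_\fin)\cong(\A_{E,\fin}^{\times}/E^{\times}\A_\fin^{\times})\rtimes\Sigma$, getting $\langle f_\chi,f_\eta\rangle=\tfrac14\vol(\A_{E,\fin}^{\times}/E^{\times}\A_\fin^{\times})\{\delta(\chi=\eta)+\delta(\chi=\eta^{\sigma})\}$ and then reading off the volume from Lemma~\ref{GxiD-class}. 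Your route via $\sum_{j\in\cJ}e_j^{-1}$ and the collapse to $\tfrac{1}{w_D}\sum_{a\in{\rm Cl}_D}\Re\chi(a)\Re\chi'(a)$ is equally clean and has the advantage of making the combinatorics of the $e_j$ visible.

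For the Hecke algebra and irreducibility, the paper takes a more uniform line: it observes once and for all that restriction to the torus gives an algebra isomorphism $\cH(\sG_1^{\xi_D}(\Q_p)\sslash\bK_{1,p}^{\xi_D*})\cong C_c(E_p^{\times}/\cO_{E,p}^{\times})^{+}$ (the $+$ denoting Galois-even functions), whence commutativity is immediate and the eigenfunction property of $f_\chi$ is a one-line check; irreducibility and one-dimensionality of the $\bK$-fixed space are then imported from \cite[Proposition~13.1]{Tsud2019}. Your case-by-case Gelfand-pair analysis and the hands-on irreducibility argument (via $\cU_\chi^{\bK}=\cH\cdot f_\chi=\C f_\chi$ and complete reducibility) achieve the same end without the external citation; this is a genuine gain in self-containment, at the cost of a longer argument. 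Note that your separation into the cases $\chi^2={\bf 1}$ and $\chi^2\ne{\bf 1}$ when verifying the eigenfunction property is unnecessary: after your substitution $u\mapsto\tau(u)$ the inner integral is $\widehat\phi(\chi)$ independently of $\tau$, and $\widehat\phi(\chi)=\widehat\phi(\chi^{-1})$ already gives $R(\phi)f_\chi=\widehat\phi(\chi)f_\chi$ directly. The $L$-function step is essentially the same in both treatments; the paper spells out the ramified local factor as $(1-\chi_p(\varpi_p)p^{-s})^{-1}$ using the description of ${\rm E}_p(\xi_D)$ from Lemma~\ref{Lem:Ep}, which you could add for completeness.
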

\begin{proof}
The containment $f_\chi\in \cV(\xi_D,\bK_{1,\fin}^{\xi_D*})$ is easy to be checked by \eqref{def:fchi}. Let $C_{\rm c}(E_p^\times/\cO_{E,p}^{\times})^{+}$ be the convolution algebra of all $\C$-valued $\cO_{E,p}^\times$-invariant compactly supported functions $\phi_0$ on $E_p^\times$ such that $\phi_0(\bar t)=\phi_0(t)$ ($t\in E_{p}^\times)$. For $\phi_0\in C_{\rm c}^{\infty}(E_p^\times/\cO_{E,p}^\times)^{+}$, define $\phi\in \cH(\sG_1(\Q_p)\sslash \bK_{1,\fin}^{\xi_D*})$ by $\phi(t\tau)=\phi_0(t)$ $(t\in E_{p}^{\times}/\Q_p,\,\tau\in {\rm Gal}(E_p/\Q_p))$. Then $\phi_0 \mapsto \phi$ yields a $\C$-algebra isomorphism from $C_{\rm c}(E_p^\times/\Q_p)^{+
}$ to $\cH_p:=\cH(\sG_1^{\xi_D}(\Q_p)\sslash \bK_{1,p}^{\xi_D*})$. In particular, $\cH_p$ is commutative so that its center $\cH_p^{+}$ coincides with $\cH_p$ itself. By this description of $\cH_p^{+}$, it is easy to check that $f_\chi$ is a joint-eigenfucntion of $\cH_p^{+}$ for all $p$. From \cite[Proposition 13.1]{Tsud2019}, the $\bK_{1,\fin}^{\xi_D*}$-fixed Hecke eigenvector $f_\chi$ generates an irreducible $\sG_1^{\xi_D}(\A_\fin)$-submodule of $\cV(\xi_D)$. The $L$-function $L(s,\cU_\chi)$ is defined to be $L(s,f_\chi)$ whose definition is given in \cite[\S 1.4]{MS98}. Let $S_{E}$, $I_E$ and $R_E$ the set of $p\in \fin$ which splits, remains inert or ramifies in $E/\Q$, respectively. Since ${\rm E}_p(\xi_D)=\bK_{1,p}^{\xi_D}/\bK_{1,p}^{\xi_D*}$ is isomorphic to $\{1\}$ or $\Z/2\Z$ according to $p\in S_E\cup I_E$ or $p\in R_E$ respectively (Lemma~\ref{Lem:Ep}), the set of Satake parameters $\{(z_p,\rho_p)\}_{p\in S_E}\cup \{\rho_p\}_{p\in R_{E}\cup I_E}$ of $f_\chi$ (in the extended sense of \cite{MS98}) is described as follows. If $p\in S_E$, then $E_p^\times \cong \Q_p^\times \oplus \Q_p^\times $ and $\chi_p=\chi_p'\boxtimes \chi_p''$ with unramified characters $\chi_p'$ and $\chi_p''$ such that $\chi_p'\chi_p''=1$, and ${\rm E}_p(\xi_D)=\{1\}$. We have\begin{align*}
z_p=(\chi_p'(p),\chi_p''(p)), \quad \rho_p=1 
\end{align*}
and $L_p(s,f_\chi)=(1-\chi_p'(p)p^{-s})^{-1}(1-\chi''_p(p)p^{-s})^{-1}$. If $p \in I_E$, then $\sG_1^{\xi_D}$ is anisotropic and unramified over $\Q_p$. Hence the Satake parameter of $f_\chi$ at $p$ is a unique character of ${\rm E}_p(\xi_D)=\{1\}$. This falls in the case $(n_0,\partial)=(2,0)$ of \cite[(1.18)]{MS98}; thus $L_p(s,f_\chi)=(1-p^{-2s})^{-1}$. If $p \in R_E$, then $\sG_1^{\xi_D}$ is anisotropic over $\Q_p$ and the Satake parameter of $f_\chi$ is a character $\rho_p$ of ${\rm E}_p(\xi_D)\cong \Z/2\Z$; $\rho_p=1$ if $\chi_p(\varpi_p)=1$ and $\rho_p$ is the nontrivial character of $\Z/2\Z$ if $\chi_p(\varpi_p)=-1$\, where $\varpi_p$ is a prime element of $E_p$. This falls in the case $(n_0,\partial)=(2,1)$ in \cite[(1.18)]{MS98}; thus $L_p(s,f_\chi)=(1-\chi_p(\varpi_p)p^{-s})^{-1}$. To sum up all the cases, we have $L_\fin(s,f_\chi)=L_\fin(s,\chi)$. 

Recall $\sG_1^{\xi_D}(\A_\fin)\cong (\A_{E,\fin}^{\times}/\A_\fin^\times)\rtimes \Sigma$, where $\Sigma=\prod_{p\in \fin}\{1,\sigma_p\}$ acts on $\A_{E}$ by coordinate-wise Galois conjugation. We endow the compact group $\Sigma$ with the probability Haar measure; then there exists a unique Haar measure on $\A_{E,\fin}^{\times}/\A_\fin^\times$ which matches the Haar measures on $\sG_1^{\xi_D}(\A_\fin)$ and on $\Sigma$. Since a natural map from $(\sG_1^{\xi_D})^{0}(\Q)\bsl \sG_1^{\xi_D}(\A_\fin)$ to $\sG_1^{\xi_D}(\Q)\bsl \sG_1^{\xi_D}(\A_\fin)$ is two-to-one and since $(\sG_1^{\xi_D})^0(\Q)\bsl \sG_1^{\xi_D}(\A_\fin)\cong (\A_{E,\fin}^\times /E^{\times}\A_\fin^\times) \rtimes \Sigma$, the inner product of $f_{\chi}$ and $f_{\eta}$ is computed as
{\allowdisplaybreaks\begin{align*}
\langle f_{\chi},f_{\eta}\rangle_{\sG_1^{\xi_D}}
&=\int_{\sG_1^{\xi_D}(\Q)\bsl \sG_1^{\xi_D}(\A)} f_{\chi}(h)\bar f_{\eta}(h)\,\d h
\\
&=\tfrac{1}{2}\int_{(\sG_1^{\xi_D})^{0}(\Q)\bsl \sG_1^{\xi_D}(\A_\fin)}f_{\chi}(h)\bar f_{\eta}(h)\,\d h
\\
&=\tfrac{1}{2} \int_{\A_{E,\fin}^\times/E^\times \A^\times_\fin } \int_{\Sigma} f_{\chi}(\ss\circ \iota(t\tau))\bar f_{\eta}(\ss \circ \iota(t\tau))\,\d t\,\d \tau
\\
&=\tfrac{1}{2}\int_{\A_{E,\fin}^\times/E^\times\A_\fin^{\times}} \tfrac{1}{2}(\chi(t)+\chi(\bar t))\times \tfrac{1}{2}(\eta(t)
+\eta(\bar t)))\,\d t\\
&=\tfrac{1}{4}{\vol(\A_{E,\fin}^\times/E^\times\A_\fin^\times)}(\delta(\chi=\eta)+\delta(\chi=\eta^{\sigma})).
\end{align*}}From our choice of the Haar measures, $\vol(\A_{E,\fin}^\times/E^\times \A_\fin^\times)=2\vol(\sG_1^{\xi_D}(\Q)\bsl \sG_1^{\xi_D}(\A_\fin))$; thus $\vol(\A_{E,\fin}^\times/E^\times \A_\fin^\times)=2h_D/w_D$ form Lemma~\ref{GxiD-class}. Thus $f_{\chi}\,(\chi\in \widehat{{\rm Cl}_D}/{\rm Gal}(E/\Q))$ is orthogonal. Note that $\chi=\chi^{\sigma}$ if and only if $\chi^{2}={\bf 1}$ as observed in \S~\ref{sec:Intro}. From Lemma~\ref{GxiD-class}, $\#(\widehat{{\rm Cl}_D}/{\rm Gal}(E/\Q))=\#(\sG_1^{\xi_D}(\Q)\bsl \sG_1^{\xi_D}(\A_\fin)/\bK_{1,\fin}^{\xi_D*})=\dim \cV(\xi_D,\bK_{1,\fin}^{\xi_D*})$. Hence $f_{\chi}$ forms an orthogonal basis of $\cV(\xi_D,\bK_{1,\fin}^{\xi_D*})$. Then the statements on the representations $\cU_\chi$ follow from \cite[Proposition 13.1]{Tsud2019}. 
\end{proof}

\section{Asymptotic formula for orthogonal group of degree $5$} \label{sec:Othogonal} 
First we recall the notation and main result from \cite{Tsud2019} in a special setting. 
Let $\Q^5$ be the space of column vectors of degree $5$ viewed as a quadratic space with the quadratic form ${}^{t}XQY$, where  
\begin{align}
Q=
\left[
\begin{smallmatrix} 
{} & {} &{} & {}&  1 \\
{} & {} &{} & {1}&  {} \\
{} & {} &{2} & {}& {}  \\
{} & {1} &{} & {}&  {} \\
{1} & {} &{} & {}&  {}
\end{smallmatrix}\right]. 
 \label{matrixQ}
\end{align}
The standard basis of $\Q^5$ is labeled as $\e_1,\e_0,v,\e_0',\e_1'$ in this section. Set $\cL=\Z^5$. Then the dual lattice $\cL^*$ of $\cL$ is given as 
$$
\cL^*=\Z\e_1\oplus \Z\e_0\oplus (2^{-1}\Z)v\oplus \Z\e_0'\oplus \Z\e_1'.
$$
Let $\sG={\bf O}(Q)$ and $\bK_\fin=\prod_{p<\infty}\bK_p$ with $\bK_p=\sG(\Q_p)\cap {\bf GL}_{5}(\Z_p)$. Since the group $\cL^{*}/\cL\cong \Z/2\Z$ admits no non-trivial group automorphism, we have that $\bK_\fin^*:={\rm Ker}(\bK_\fin\rightarrow {\rm Aut}(\cL^*/\cL))$ coincided with $\bK_\fin$. 

Set
$$
{}^t[x_1,X,y_1]=\left[\begin{smallmatrix} x_1 \\ X \\ y_1 \end{smallmatrix}\right] :=\left[\begin{smallmatrix} x_1 \\ b \\ a \\ c \\ y_1 \end{smallmatrix} \right], \quad x_1,y_1\in \Q,\,X=\left[\begin{smallmatrix} a & b \\ c & -a \end{smallmatrix} \right]\in V_1(\Q). 
$$
Then the quadratic space $(V_1,Q_1)$ considered in \S~\ref{sec:Tern} is isometrically embedded to $(\Q^5,Q)$ by the map sending $X\in V_1$ to the vector ${}^t[0, X,0]\in \Q^5$. Here, we remind the readers that an element $X=\left[\begin{smallmatrix} x_1 & x_2 \\ x_3 & -x_1\end{smallmatrix}\right]$ of $V_1$ is identified with a column vector ${}^t[x_2,x_1,x_3]$ and also with a symmetric matrix $Xw^{-1}=\left[\begin{smallmatrix} x_2 & -x_1 \\ -x_1 & -x_3 \end{smallmatrix}\right]\in \cQ$ from time to time. Set $\fz_0=\left[\begin{smallmatrix} 0 & \sqrt{-1} \\ -\sqrt{-1} & 0  \end{smallmatrix}\right] \in V_1(\C)$. Let $\cD$ be the connected component of $\tilde \cD:=\{\fz \in V(\C)|\,Q_1[\Im(\fz)]<0\}$ containing the point $\fz_0$, or explicitly
$$
\cD=\left\{\fz=\left[\begin{smallmatrix}z_2&  z_1 \\ z_3 & -z_2 \end{smallmatrix} \right]\in \C^3\bigm|\,(\Im z_1)(\Im z_3)+(\Im z_2)^{2}<0, \,\Im z_1>0\,\right\}.
$$
The group $\sG(\R)$ acts on $\tilde \cD$ as $\sG(\R)\times \tilde \cD\ni (g,Z)\mapsto g\langle \fz \rangle \in \tilde \cD$, where 
\begin{align}
g\left[\begin{smallmatrix} -Q_1[\fz]/2 \\ \fz \\ 1 \end{smallmatrix} \right]
=J(g,\fz)\,\left[\begin{smallmatrix} 
-Q_1[g\langle \fz \rangle]/2 \\ g\langle \fz \rangle \\ 1 \end{smallmatrix}\right]\label{ActionGD}
\end{align}
with $J(g,\fz) \in \C^\times$ the factor of automorphy. Let $\sG(\R)^{+}=\{g\in \cG(\R)|\,g\langle \cD\rangle=\cD\}$. Then $\sG(\R)^{+}$ is a normal subgroup of $\sG(\R)$ of index $2$ such that $\sG(\R)^{0} \subset \sG(\R)^{+}$. Set $\sG(\Q)^{+}=\sG(\Q)\cap \sG(\R)^{+}$. 

For an even positive integer $l$, Let $S_{l}(\bK_{\fin})$ be the space of all those holomorphic bounded functions ${\rm F}:\cD\times \sG(\A_\fin) \rightarrow \C$ such that 
\begin{align}
{\rm F}(\gamma\langle \fz\rangle,\gamma g_\fin k)=J(\gamma,\fz)^{l}{\rm F}(\fz,g_\fin), \quad \gamma \in \sG(\Q)^{+},\,(\fz,g_\fin)\in \cD\times \sG(\A_\fin),\,k\in \bK_\fin.
 \label{Gautomorphy}
\end{align}
For our particular $\sG$, we have $\sG(\A_\fin)=\sG(\Q)^{+}\bK_{\fin}$. Hence for any $g_\fin \in \sG(\A_\fin)$, we have ${\rm F}(\fz,g_\fin)={\rm F}(\gamma\langle \fz\rangle, 1)$ from \eqref{Gautomorphy} by writing $g_\fin=\gamma k$ with $\gamma\in \sG(\Q)^+$ and $k\in \bK_\fin$. Thus we can identify $S_l(\bK_\fin)$ with the space of bounded holomorphic functions $F:\cD\rightarrow \C$ such that $F(\gamma\langle \fz \rangle)=J(\gamma,\fz)^{l}F(\fz)$ for all $\gamma \in \Gamma^{+}(Q)$, where we set $\Gamma^{+}(Q)=\sG(\Z)\cap \sG(\Q)^+$.

Let $\cL_1^{*}\cong\Z\oplus 2^{-1}\Z\oplus \Z$ be the dual lattice of $\cL_1=V_1(\Z)$ as in \S~\ref{sec:Tern}. Let $a_{{\rm F}}(g_\fin; n)\,(g_\fin \in \sG(\A_\fin),\,\eta \in \cL_1^{*},\,Q_1[\eta]<0)$ be the set of Fourier coefficients of ${\rm F}$, which fits in the Fourier series expansion of ${\rm F}$:$$
{\rm F}(\fz,g_\fin)=\sum_{\substack{\eta \in \cL_1^{*} \\
Q_1[\eta]<0}} 
a_{\rm F}(g_\fin; \eta)\,\exp(2\pi \sqrt{-1}(z_1\eta_3+2z_2\eta_2+z_3\eta_1)) 
, \quad \fz=\left[\begin{smallmatrix}z_2 & z_1 \\z_3 & -z_2\end{smallmatrix}\right]\in \cD, \,g_\fin \in \sG(\A_\fin).
$$
The Hecke algebra $\cH(\sG(\A_\fin)\sslash\bK_\fin)$ acts on a modular form ${\rm F}(\fz,g_\fin)$ through the convolution product in the second variable $g_\fin$. Fix an orthogonal basis $\cF_l$ of $S_l(\bK_\fin)$ consisting of joint eigenfunctions of Hecke operators from $\cH(\sG(\A_\fin)\sslash \bK_\fin)$, where the inner product of $S_l(\bK_\fin)$ is defined as 
$$
\langle {\rm F},{\rm F}_1\rangle=\int_{\sG(\Q)^{+}\bsl (\cD \times \sG(\A_\fin))}{\rm F}(\fz,g_\fin)\,\overline{F_1(\fz, g_\fin)}\,\d \mu_{\cD}(\fz)\,\d g_\fin$$
with $\d\mu_{\cD}(\fz)$ a $\sG(\R)^0$ invariant measure on $\cD$ given as 
\begin{align}
 \d\mu_{\cD}(\fz)=(Q_1(\Im(\fz))^{-3}\,\prod_{j=1}^{3}2^{-1}|\d z_j\wedge \d \bar z_j|
\label{BergmanMetO}
\end{align}
and $\d g_\fin=\otimes_{p<\infty}\d g_p$ is the product measure of Haar measures $\d g_p$ on $\sG(\Q_p)$ so normalized that $\vol(\bK_p)=1$. Let $\bG=\sG^{0}$ be the special orthogonal group of $(V,Q)$. Then, for each prime number $p$, $\bG(\Z_p)=\bG(\Q_p)\cap \bK_p$ is a maximal compact subgroup of $\bG(\Q_p)$ stabilizing the lattice $\cL_p$. Since $\dim(V)=5$ is odd, $\sG$ is the direct product of $\bG$ and $\{\pm 1_5\}$, the center of $\sG$. Thus by restricting functions to $\bG(\Q_p)$ we obtain an isomorphism $\cH(\sG(\Q_p)\sslash \bG(\Z_p)) \cong \cH(\sG(\Q_p)\sslash \bK_p)$. For $\nu=(\nu_1,\nu_2)\in \fX_p$, let $I_p^{\bG}(\nu)$ denote the minimal principal series of $\bG(\Q_p)$ induced from the unramified character $\chi^{\bG}_{\nu_1,\nu_2}$ of the upper-triangular Borel subgroup ${\mathbb B}(\Q_p)$ of $\bG(\Q_p)$ such that 
\begin{align}
\chi^{\bG}_{\nu_1,\nu_2}:\,\diag(t_1,t_2,1,t_2^{-1},t_1^{-1}) \rightarrow |t|_p^{\nu_1}|t_2|_p^{\nu_2}.
 \label{SOBorelchar}
\end{align}
Let $\pi_{p}^{\bG}(\nu)$ be the unique $\bG(\Z_p)$-spherical constituent of $I_p^{\bG}(\nu)$. For each ${\rm F}\in \cF_l$, let $\{(\alpha_p,\beta_p)\}_{p<\infty}$ be the set of Satake parameters of ${\rm F}$, i.e., for each $p<\infty$, the spherical function corresponding to the eigencharacter $\lambda_{{\rm F},p}:\cH(\bG(\Q_p)\sslash \bG(\Z_p))\rightarrow \C$ on ${\rm F}$ is obtained from the $\bG(\Z_p)$-invariant vector in $\pi_{p}^{\bG}(\nu)$, where $\nu=(\nu_{1,p},\nu_{2,p}) \in \fX_p$ is determined by $\alpha_p=p^{-\nu_{1,p}}$, $\beta_p=p^{-\nu_{2,p}}$. The local $p$-factor of $\lambda_{{\rm F},p}$ is then defined as
$$
L_{p}(s,\lambda_{{\rm F},p})=(1-\alpha_p p^{-s})^{-1}(1-\beta_p p^{-s})^{-1}(1-\alpha_p^{-1}p^{-s})^{-1}(1-\beta_p^{-1}p^{-s})^{-1}.
$$
Then the standard $L$-function of ${\rm F}$ is defined
as the degree $4$ Euler product
$$
L_\fin({\rm F},s)=\prod_{p<\infty} L(s,\lambda_{{\rm F},p}), 
$$
which is shown to be absolutely convergent on $\Re\,s>4$. The completed $L$-function
$$
L({\rm F},s)=\Gamma_{\C}(s+1)\Gamma_{\C}(s+l-3/2)\,L_\fin({\rm F},s)
$$
is continued to a meromorphic function on $\C$ which is holomorphic except for possible simple poles at $s=3/2$ and $s=-1/2$ satisfying the functional equation $$
L({\rm F},1-s)=L({\rm F},s).
$$
For a finite set $S$ of prime numbers and ${\rm F}\in \cF_l$, set 
$$\nu_{S}({\rm F}):=\{(\nu_{1,p},\nu_{2,p})\}_{p\in S} \in \fX_S:=\prod_{p\in S}(\C/2\pi \sqrt{-1}(\log p)^{-1}\Z)^2. 
$$
Let $D<0$ be a fundamental discriminant. Let $\cV(\xi_D;\bK_{1,\fin}^{\xi_D*})$ be the space of all the smooth $\C$-valued functions $f$ on $\sG_1^{\xi_D}(\Q)\bsl \sG_1^{\xi_D}(\A)$ such that $f(h u_\infty u_\fin)=f(h)$ for all $u_\infty\in \sG_1^{\xi_D}(\R)$, $u_\fin \in \bK_{1,\fin}^{\xi_D *}$. We endow the group $\sG_1^{\xi_D}(\A)$ with a Haar measure $\d h=\otimes_{p\leq \infty}\d h_p$, where $\d h_\infty$ is the probability Haar measure on the compact group $\sG_1^{\xi_D}(\R)$ and the measure $\d h_p$ on $\sG_1^{\xi_D}(\Q_p)$ with $p<\infty$ is so normalized that $\vol(\bK_{1,p}^{\xi_D*})=1$. Let $f \in \cV(\xi_{D};\bK_{1,\fin}^{\xi_D *})$ be a simultaneous eigenfunction of the Hecke algebra $\cH^{+}(\sG_1^{\xi_D}(\A_\fin)\sslash \bK_{1,\fin}^{\xi_D *})$. Then set
\begin{align*}
a_{{\rm F}}^{f}(D)&=\sum_{j\in \cJ}f_{\chi}(\tilde u_j)a_{{\rm F}}(\tilde u_j;\xi_D)/e_j, \\
\fa_{{\rm F}}^{f}(D)&=(4\pi\sqrt{2|Q_1(\xi_D)|})^{3/2-l}\,\Gamma(2l-1)^{1/2}\,a_{{\rm F}}^{f}(D),
\end{align*}
where $\{\tilde u_j\}_{j\in \cJ}$ and $e_j\,(j\in  \cJ)$ are as in Lemma~\ref{GxiD-class}, and denote by $\|f\|_{\sG_1^{\xi_D}}$ the $L^2$-norm of $f$ viewed as an element of $L^2(\sG_1^{\xi_D}(\Q)\bsl \sG_1^{\xi_D}(\A),\d h)$. Let $\cU$ be an irreducible $\sG_1^{\xi_D}(\A_\fin)$-submodule of $\cV(\xi_D)$ containing $\bK_{1,\fin}^{\xi_D*}$-fixed vectors, and $L_{\fin}(s,\cU)$ be the standard $L$-function of $\cU$ defined in \cite{MS98}. The completed $L$-function $L(s,\cU)=\Gamma_{\cU}(s)\,L_\fin(s,\cU)$ with $\Gamma_{\cU}(s):=(2\pi)^{-s}\Gamma(s)D^{s/2}$ satisfies the functional equation $L(1-s,\cU)=L(s,\cU)$ (\cite[Theorem]{MS98} and \cite[\S13.2]{Tsud2019}). For a finite set $S$ of prime numbers such that $2\not\in S$ and $p\not \in S$ for all prime divisors $p|D$, let $\fX_S^{+0}=\prod_{p\in S}\fX_{p}^{+0}$ and $W(C_2)^{S}=\prod_{p\in S}W(C_2)$, where $\fX_{p}^{+0}$ is the set of $\nu \in \fX_p$ such that $\pi_{p}^{\bG}(\nu)$ is unitarizable and we consider the coordinate-wise action of $W(C_2)^{S}$ on $\fX_{S}^{+0}$. Let ${\bf \Lambda}^{\xi_D,z_S}(s)=\bigotimes_{p \in S}\Lambda_{p}^{\xi_D,\cU}(s)$ with $s\in \fX_p$ be the Radon measure on the space $\fX_S^{+0}/W(C_2)^{S}$ defined by the formula \cite[(5.20)]{Tsud2019}, or explicitly given by \eqref{LambdaXis} below. Let $\cB(\cU;\bK_{1,\fin}^{\xi_D*})$ be an orthonormal basis of $\cU\cap \cV(\xi_D;\bK_{1,\fin}^{\xi_D*})$. Let $D_{*}(s)$ be the polynomial function of $s$ defined in \cite[\S 2.12]{Tsud2019}, or explicitly $D_*(s)=s^2-1$ in our case. Then \cite[Theorem 1.1 and Theorem 1.2]{Tsud2019} yields the following.  

\begin{thm} \label{th1.1-Tsud2019} Let $\phi=\otimes_{p<\infty}\phi_p \in \cH(\sG(\A_\fin)\sslash \bK_\fin)$ be any Hecke function such that $\phi_p=1_{\bK_p}$ for $p\not\in S$, where $S$ is a finite set of odd prime numbers. Then there exists a constant $C=C(\phi,D)>1$ such that as $l\in 2\N$ grows to infinity, 
\begin{align*}
&\frac{{\bf \Gamma}(l)}{4l^3}
\sum_{{\rm F}\in \cF_l }
\widehat{\phi_S}(\nu_S({\rm F}))\,\frac{L_{\fin}(1/2,{\rm F})}{\langle {\rm F},{\rm F}\rangle}\,
\sum_{f\in \cB(\cU.\bK_{1,\fin}^{\xi_D*})} {|\fa_{{\rm F}}^{f}(D)|^2} \\
&= 4\left(\tfrac{\pi}{4}\right)^{-1}\,\biggl\{
{\bf \Lambda}^{\xi_D,\cU}(0;\widehat{\phi_S})\,{\rm Res}_{s=1}L_\fin(s,\cU)\,\biggl(\psi(l-1)+\frac{\Gamma_{\cU}'(1)}{\Gamma_\cU(1)}-\frac{D_{*}'(0)}{D_*(0)}-\log(\sqrt{8|Q_1(\xi_D)|}\pi)\biggr) \\
&\quad +{\rm Res}_{s=1}L_\fin(s,\cU)\,\biggl(\tfrac{\d}{\d s}|_{s=0}{\bf \Lambda}^{\xi_D,\cU}(s;{\widehat \phi_S})\biggr)
+{\bf \Lambda}^{\xi_D,\cU}(0;{\widehat \phi_S})\,{\rm CT}_{s=1}L_\fin(s,\cU)\biggr\}+O(C^{-l}),
\end{align*}
where $${\bf \Gamma}(l)=\frac{l^3\Gamma(l-3/2)\Gamma(l-2)}{\Gamma(l-1/2)\Gamma(l)}.$$
\end{thm}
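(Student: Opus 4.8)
The statement is a specialization of \cite[Theorems 1.1 and 1.2]{Tsud2019} to the rank‑two orthogonal group $\sG={\bf O}(Q)$ of degree $5$ with $Q$ as in \eqref{matrixQ}, i.e.\ to the case $m=3$ of the ${\rm SO}(2,m)$ framework of \emph{loc.\ cit.} Accordingly, the plan is to invoke those two theorems and to translate the adelic objects of \cite{Tsud2019} into the explicit ones fixed in \S\ref{sec:Othogonal}, keeping careful track of every normalizing constant. \cite[Theorem 1.1]{Tsud2019} supplies, for each even $l$, an exact identity expressing a regularized average of the normalized Bessel periods $\fa_{{\rm F}}^{f}(D)$ against $L_\fin(1/2,{\rm F})$ and the spherical weight $\widehat{\phi_S}(\nu_S({\rm F}))$ in terms of a finite expansion whose principal part is controlled by ${\rm Res}_{s=1}L_\fin(s,\cU)$, ${\rm CT}_{s=1}L_\fin(s,\cU)$, the archimedean weight ${\bf\Lambda}^{\xi_D,\cU}$ together with its $s$-derivative at $0$, and an explicit archimedean factor; \cite[Theorem 1.2]{Tsud2019} estimates the complementary off-diagonal contributions by $O(C^{-l})$ for a suitable $C=C(\phi,D)>1$. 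Combining these yields the asserted asymptotic formula.

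First I would pin down the geometric data. Here $\cL=\Z^5$ has $\cL^{*}/\cL\cong\Z/2\Z$, so $\bK_\fin^{*}=\bK_\fin$, which is the hypothesis on maximal lattices underlying \cite{MS98, Tsud2019}; the reduced vector is $\xi_D$ with $Q_1(\xi_D)=D/2$ (Lemma~\ref{L-GxiDclassBiQ}); the adelic period data — the representatives $\{\tilde u_j\}_{j\in\cJ}$, the stabilizer orders $e_j$ and the total mass $\mu_D=h_D/w_D$ — are those of Lemma~\ref{GxiD-class}; the Haar measures are normalized as in \S\ref{sec:Othogonal} ($\vol(\bK_p)=1$, $\vol(\bK_{1,p}^{\xi_D*})=1$, probability measure on $\sG_1^{\xi_D}(\R)$), matching \cite{Tsud2019}; and under the isomorphism $\cH(\sG(\Q_p)\sslash\bG(\Z_p))\cong\cH(\sG(\Q_p)\sslash\bK_p)$ together with $\alpha_p=p^{-\nu_{1,p}}$, $\beta_p=p^{-\nu_{2,p}}$, the spherical transform $\widehat{\phi_S}$ on $\fX_S$ is the one used in \emph{loc.\ cit.} I would then compute the relevant constants: the polynomial $D_{*}(s)$ of \cite[\S2.12]{Tsud2019} specializes to $D_{*}(s)=s^2-1$ in degree $5$; the completed $L$-factor of $\cU$ in the sense of \cite[\S1.4]{MS98} has gamma factor $\Gamma_{\cU}(s)=(2\pi)^{-s}\Gamma(s)D^{s/2}$; and the archimedean Shintani-function evaluation of \cite{Tsud2019}, carried out for scalar weight $l$ and the vector $\xi_D$, produces the normalization $\fa_{{\rm F}}^{f}(D)=(4\pi\sqrt{2|Q_1(\xi_D)|})^{3/2-l}\Gamma(2l-1)^{1/2}a_{{\rm F}}^{f}(D)$ recorded above, the weight ${\bf\Gamma}(l)=l^{3}\Gamma(l-3/2)\Gamma(l-2)/(\Gamma(l-1/2)\Gamma(l))$, the terms $\psi(l-1)$ and $\log(\sqrt{8|Q_1(\xi_D)|}\pi)$ in the main term, and the prefactors $\tfrac{{\bf\Gamma}(l)}{4l^{3}}$ on the left and $4(\pi/4)^{-1}$ on the right.

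The main obstacle is precisely this last bookkeeping step: identifying the archimedean local integrals (the Shintani functions and the associated spherical Plancherel weight ${\bf\Lambda}^{\xi_D,\cU}$, cf.\ \eqref{LambdaXis}) and checking that every normalization coming out of \cite{Tsud2019} — the Tamagawa-type volume factors, the $\Gamma$-factor quotients and the powers of $\pi$ and of $|Q_1(\xi_D)|$ — assembles into exactly the constants $D_*(s)=s^{2}-1$, $\Gamma_{\cU}(s)=(2\pi)^{-s}\Gamma(s)D^{s/2}$, ${\bf\Gamma}(l)$ and $4(\pi/4)^{-1}$ displayed in the statement. Once these identifications are made, the theorem follows by substituting them into \cite[Theorems 1.1 and 1.2]{Tsud2019}.
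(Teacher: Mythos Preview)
Your approach is the same as the paper's: both reduce the theorem to \cite[Theorems~1.1 and 1.2]{Tsud2019} specialized to $m=3$, $\rho=1$, $\xi=\xi_D$.  However, two inputs that the paper singles out are absent from your sketch.  First, the general formula in \cite{Tsud2019} depends on the invariants $d^{+}(\cU)$, $d^{-}(\cU)$ and $\chi(\cU)$ attached to the action of the involution $\tau_\fin^{\xi_D}$ on $\cU(\bK_{1,\fin}^{\xi_D*})$; the paper invokes Lemma~\ref{L:tauxiD} (triviality of $\tau_\fin^{\xi_D}$) together with Lemma~\ref{L:IrredDecomp-cV} to obtain $d^{+}(\cU_\chi)=1$, $d^{-}(\cU_\chi)=0$, $\chi(\cU_\chi)=1$, which is what makes the main term take exactly the displayed shape.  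Your bookkeeping list covers the archimedean constants but never isolates this non-archimedean representation-theoretic step.

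Second, your division of labor between the two cited theorems is inverted: in \cite{Tsud2019} it is Theorem~1.2 that supplies the explicit main term (and only that), while the exponential error $O(C^{-l})$ comes from the estimates behind \cite[Proposition~5.9]{Tsud2019}; the paper remarks that the latter argument has to be (and is easily) extended to the case where $L_\fin(s,\cU)$ has a pole at $s=1$, i.e.\ $\chi={\bf 1}$.  You should flag that extension rather than attribute the error bound to Theorem~1.2.
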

\begin{proof} From Lemma~\ref{L:IrredDecomp-cV}, we may suppose $\cU=\cU_\chi$ with some $\chi \in \widehat{{\rm Cl}_D}$. We apply \cite[Theorem 1.1, Theorem 1.2]{Tsud2019} to our $(V,Q)$ taking $\xi=\xi_D$ and $\cU=\cU_{\chi}$. We have $m=3$ and $\rho=(3-1)/2=1$. Moreover, from Lemmas~\ref{L:tauxiD} and \ref{L:IrredDecomp-cV}, $\cU_\chi(\bK_{1,\fin}^{\xi_D*})=\C f_\chi$, $d^{+}(\cU_\chi)=1$, $d^{-}(\cU_\chi)=0$ and $\chi(\cU_\chi)=1$. Note that $\#\cB(\cU_\chi;\bK_{1,\fin}^{\xi_D*})=1$. Although \cite[Theorem 1.2]{Tsud2019} only describes the main term of the asymptotic formula, the argument to prove \cite[Proposition 5.9]{Tsud2019} is easily extended to the case when $L_\fin(s,\cU)$ has a pole at $s=1$. 
\end{proof}

To simplify the formula further, we use the following lemma. 
\begin{lem} \label{L:BesselPer}
 Let $\chi \in \widehat{{\rm Cl}_D}$ and $f=f_\chi \in \cV(\xi_D,\bK_{1,\fin}^{\xi_D*})$ be the Hecke eigen function defined by \eqref{def:fchi}. Then  
\begin{align*}
\frac{1}{4l^3} \, 
{|\fa_{{\rm F}}^{f}(D)|^2}
=2\pi^{-1}\,\left(1-\tfrac{3}{2l}\right)\left(1-\tfrac{2}{l}\right)\left(1-\tfrac{1}{l}\right)\, \left(\tfrac{|D|}{4} \right)^{3/2-l}\,c_{l}
\,w_D^{-2}\,|{\rm R}({\rm F},D,\chi)|^2 
\end{align*}
with 
$$
c_l=\frac{\sqrt{\pi}}{4}(4\pi)^{3-2l}\Gamma(l-3/2)\Gamma(l-2), \qquad 
{\rm R}({\rm F},D, \chi):=\sum_{j=1}^{h_D}a_{{\rm F}}(1;T_j w)\chi(c_j), $$
where $\{T_j\}_{j=1}^{h_D}$ is a complete set of representatives in ${\bf SL}_2(\Z)\bsl \cQ_{{\rm prim}}^{+}(D)$ and $c_j\in\A_{E,\fin}^{\times}/E^\times \widehat{\cO_E}^{\times} $ is the image of $T_j$ under the map ${\bf SL}_2(\Z)\bsl \cQ_{\rm prim}^{+}(D)\rightarrow \A_{E,\fin}^\times/E^\times \widehat{\cO_E}^\times$ obtained by Lemmas~\ref{L-GxiDclassBiQ} and \ref{L-CLd-stab}. 
\end{lem}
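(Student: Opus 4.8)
The plan is to unwind the definition of $\fa_{{\rm F}}^{f}(D)$ in terms of the linear combination $a_{{\rm F}}^{f}(D)=\sum_{j\in\cJ}f_\chi(\tilde u_j)\,a_{{\rm F}}(\tilde u_j;\xi_D)/e_j$, and to match it against the classical period ${\rm R}({\rm F},D,\chi)=\sum_{j=1}^{h_D}a_{{\rm F}}(1;T_jw)\chi(c_j)$. The first step is bookkeeping: using the adelic-to-classical translation, each adelic Fourier coefficient $a_{{\rm F}}(\tilde u_j;\xi_D)$ equals a classical Fourier coefficient $a_{{\rm F}}(1;T_{j}w)$ attached to the binary quadratic form corresponding to $\tilde u_j$ under the chain of bijections in Lemmas~\ref{L-GxiDclassBiQ} and \ref{L-CLd-stab}; one must verify no extra automorphy factor $J(\gamma,\cdot)$ intervenes because $\sG(\A_\fin)=\sG(\Q)^+\bK_\fin$ and $\xi_D$ is fixed. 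Then $f_\chi(\tilde u_j)=\tfrac12(\chi(u_j)+\chi(\bar u_j))$ by \eqref{def:fchi}, and the weighting by $1/e_j$ together with the identification of the index set $\cJ\cong J/\{{\rm id},\sigma'\}$ from Lemma~\ref{GxiD-class} will, after expanding the symmetrization, turn the sum over $\cJ$ with weights $e_j^{-1}=\{1+\delta(j=\hat j)\}^{-1}w_D^{-1}\cdot 2$ into the plain sum over a full set of representatives of ${\bf SL}_2(\Z)\bsl\cQ^{+}_{\rm prim}(D)\cong {\rm Cl}_D$; the $w_D/2$ factor and the doubling on self-conjugate classes cancel against the two terms $\chi(u_j)$ and $\chi(\bar u_j)$. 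The upshot of this step should be an identity of the shape $a_{{\rm F}}^{f}(D)=\tfrac{2}{w_D}\,{\rm R}({\rm F},D,\chi)$, up to a harmless factor I will pin down by comparing normalizations.

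Next I would simply substitute the definition $\fa_{{\rm F}}^{f}(D)=(4\pi\sqrt{2|Q_1(\xi_D)|})^{3/2-l}\,\Gamma(2l-1)^{1/2}\,a_{{\rm F}}^{f}(D)$ and take absolute squares, so that
\begin{align*}
\frac{1}{4l^3}|\fa_{{\rm F}}^{f}(D)|^2
=\frac{\Gamma(2l-1)}{4l^3}\,(4\pi\sqrt{2|Q_1(\xi_D)|})^{3-2l}\,\frac{4}{w_D^2}\,|{\rm R}({\rm F},D,\chi)|^2.
\end{align*}
Since $Q_1(\xi_D)=D/2$ by Lemma~\ref{L-GxiDclassBiQ}, we have $\sqrt{2|Q_1(\xi_D)|}=\sqrt{|D|}$, hence $(4\pi\sqrt{2|Q_1(\xi_D)|})^{3-2l}=(4\pi)^{3-2l}|D|^{(3-2l)/2}=(4\pi)^{3-2l}(|D|/4)^{3/2-l}\cdot 4^{3/2-l}\cdot|D|^{\,?}$ — here one must be careful: $(|D|)^{3/2-l}=(|D|/4)^{3/2-l}\,4^{3/2-l}$, and the stray $4^{3/2-l}=2^{3-2l}$ gets absorbed together with $(4\pi)^{3-2l}$ when one also rewrites $\Gamma(2l-1)$ via the duplication formula. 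The remaining purely archimedean manipulation is the Legendre duplication formula $\Gamma(2l-1)=\pi^{-1/2}2^{2l-2}\Gamma(l-1/2)\Gamma(l)$, combined with $\Gamma(l-1/2)\Gamma(l)=(l-1)\,\Gamma(l-3/2)\,(l-3/2)^{-1}\cdots$ — more cleanly, use $\Gamma(l-1/2)=(l-3/2)\Gamma(l-3/2)$ and $\Gamma(l)=(l-1)\Gamma(l-1)=(l-1)(l-2)\Gamma(l-2)$, so that
\begin{align*}
\Gamma(2l-1)=\pi^{-1/2}\,2^{2l-2}\,(l-3/2)(l-1)(l-2)\,\Gamma(l-3/2)\Gamma(l-2),
\end{align*}
and therefore $\frac{\Gamma(2l-1)}{4l^3}=\frac{\pi^{-1/2}2^{2l-2}}{4}\,\frac{(l-3/2)(l-1)(l-2)}{l^3}\,\Gamma(l-3/2)\Gamma(l-2)$. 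Recognizing $\frac{(l-3/2)(l-1)(l-2)}{l^3}=(1-\tfrac{3}{2l})(1-\tfrac1l)(1-\tfrac2l)$ and $c_l=\tfrac{\sqrt\pi}{4}(4\pi)^{3-2l}\Gamma(l-3/2)\Gamma(l-2)$, one collects the powers of $2$, $4$, $\pi$: the factor $2^{2l-2}\cdot(4\pi)^{3-2l}\cdot 2^{3-2l}\cdot\pi^{-1/2}$ should reorganize into $2\pi^{-1}c_l^{-1}\cdot(\text{stuff})$, and after the dust settles the claimed constant $2\pi^{-1}(1-\tfrac{3}{2l})(1-\tfrac1l)(1-\tfrac2l)(|D|/4)^{3/2-l}c_l$ emerges with the $w_D^{-2}|{\rm R}({\rm F},D,\chi)|^2$ riding along.

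The genuine content — and the step I expect to be the main obstacle — is the combinatorial identity $a_{{\rm F}}^{f}(D)=\tfrac{2}{w_D}{\rm R}({\rm F},D,\chi)$: one must check that the adelic Fourier coefficients $a_{{\rm F}}(\tilde u_j;\xi_D)$, the symmetrized character values $f_\chi(\tilde u_j)$, and the stabilizer weights $e_j^{-1}$ conspire exactly so that summing over the Galois-quotient index set $\cJ$ reproduces the full sum over $h_D$ ideal classes with the characters $\chi(c_j)$ and an overall $2/w_D$; in particular one must confirm that the factor $\{1+\delta(j=\hat j)\}$ appearing in both $e_j$ (Lemma~\ref{GxiD-class}) and in the orbit-counting for $\cJ\to J$ cancels, so that self-conjugate classes are not over- or under-counted. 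Everything else is routine Gamma-function algebra and tracking powers of $2$, $\pi$, and $|D|$, which I will carry out once the identification $\sqrt{2|Q_1(\xi_D)|}=\sqrt{|D|}$ and the duplication formula are in place.
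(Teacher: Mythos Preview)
Your overall strategy matches the paper's exactly: reduce to a combinatorial identity expressing $a_{\rm F}^{f_\chi}(D)$ as a constant times ${\rm R}({\rm F},D,\chi)$, then do the Gamma-function bookkeeping via the duplication formula. The archimedean manipulation you outline is correct and is precisely what the paper does.

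There is, however, a genuine arithmetic slip in the combinatorial step. You claim $a_{\rm F}^{f}(D)=\tfrac{2}{w_D}{\rm R}({\rm F},D,\chi)$, but the correct constant is $\tfrac{1}{w_D}$. Trace it through: for $j\in\cJ$ one has $e_j^{-1}f_\chi(\tilde u_j)=\tfrac{2}{w_D}\{1+\delta(j=\hat j)\}^{-1}\cdot\tfrac12(\chi(u_j)+\chi(\bar u_j))=\tfrac{1}{w_D}\{1+\delta(j=\hat j)\}^{-1}(\chi(u_j)+\chi(\bar u_j))$. For $j$ not self-conjugate this is $\tfrac{1}{w_D}(\chi(u_j)+\chi(\bar u_j))$, one term for each of the two classes $u_j,\bar u_j$; for $j$ self-conjugate it is $\tfrac{1}{w_D}\chi(u_j)$. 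In both cases each ideal class contributes with weight $\tfrac{1}{w_D}$, so $a_{\rm F}^{f}(D)=\tfrac{1}{w_D}{\rm R}({\rm F},D,\chi)$. With your $\tfrac{2}{w_D}$ the final expression would be off by a factor of $4$; the ``harmless factor'' is not harmless here. The $\tfrac12$ in the definition of $f_\chi$ is what you are losing.

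A second point that deserves more than a one-line sketch is the identity $a_{\rm F}(\tilde u_j;\xi_D)=a_{\rm F}(1;T_jw)$. In the paper this is obtained by writing $\tilde u_j=\gamma_j h_j\kappa_j$ with $\gamma_j\in\sG_1(\Q)$, $h_j\in\sG_1(\R)$, $\kappa_j\in\bK_{1,\fin}$, unwinding the Whittaker integral using the $\sG(\Q)$- and $\bK_\fin$-invariance of the adelic lift $F$, and then invoking the identity $\cW_l^{\gamma_j^{-1}\xi_D}(\sm(h_j)g_\infty)=\cW_l^{\xi_D}(g_\infty)$, which holds because $J(\cdot,\fz)$ is left $\sG_1(\R)$-invariant. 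This same computation also gives $a_{\rm F}(\tilde u_j;\xi_D)=a_{\rm F}(\tilde u_{\hat j};\xi_D)$ (since one may take $\gamma_{\hat j}=\tilde\sigma\gamma_j$ with $\tilde\sigma\xi_D=\xi_D$), which is exactly what is needed to unfold the sum over $\cJ$ into the sum over all $h_D$ classes. Your phrase ``no extra automorphy factor $J(\gamma,\cdot)$ intervenes'' is pointing in the right direction, but the actual mechanism is the Whittaker-function identity above, not the decomposition $\sG(\A_\fin)=\sG(\Q)^+\bK_\fin$.
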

\begin{proof} Recall some material from \cite[\S 2.11]{Tsud2019}. Set $F(g_\fin g_\infty)=J(g_\infty, \fz_0)^{-l}\,{\rm F}(g_\infty\langle \fz_0 \rangle;g_\fin)$ for $g_\fin \in \sG(\A_\fin)$ and $g_\infty \in \sG(\R)^+$. For $\eta \in V_1(\R)$ such that $Q_1(\eta)<0$, let 
$$\cW_l^{\eta}(g_\infty)=J(g_\infty,\fz_0)^{-l}\exp(2\pi \sqrt{-1} Q_1(\eta,g_\infty\langle \fz_0 \rangle)), \quad g_\infty\in \sG(\R)^{0}
$$
be the holomorphic archimedean Whittaker function of weight $l$. Then, 
$$
a_{{\rm F}}(g_\fin;\eta)\,\cW_l^{\eta}(g_\infty)
=\int_{V_1(\Q)\bsl V_1(\A)} F(\sn(X)g_\fin g_\infty)\,\psi(-Q_1(\eta,X))\,\d X,
$$
where $\psi:\Q\bsl \A\rightarrow \C$ is a character determined by $\psi(x)=e^{2\pi \sqrt{-1}x}\,(x\in \R)$, 
\begin{align*}\sn(X)=\left[\begin{smallmatrix} 1 & {-{}^tX Q_1} & {-2^{-1}Q_1[X]} \\ {0} & {1_3} & {X} \\ {0} & {0} & {1} \end{smallmatrix} \right]
\end{align*}
and $\d X$ is the Haar measure on $V_1(\A)$ such that $\vol(V_1(\Q)\bsl V_1(\A))=1$. Let $\{\tilde u_j\}_{j \in \cJ}$ be as in Lemma~\ref{GxiD-class}; for each $j\in \cJ$, choose $\gamma_j\in \sG_1(\Q)$, $h_{j}\in \sG_1(\R)$, and $\kappa_j\in \bK_{1,\fin}$ such that $\tilde u_j=\gamma_{j}h_j\kappa_j$. Then by the construction of the bijection $$\A_{E,\fin}^\times/E^\times \widehat{\cO_E}^\times \cong \sG_{1}^{\xi_D}(\Q)\bsl \sG_1^{\xi_D}(\A_\fin)/\bK_{1,\fin}^{\xi_D*} \cong {\bf SL}_2(\Z)\bsl \cQ_{{\rm prim}}^{+}(D),$$
 we see that $\tilde u_j \in \sG_1^{\xi_D}(\Q)\bsl \sG_{1}^{\xi_D}(\A_\fin)/\bK_{1,\fin}^{\xi_D*}$ and $c_j \in \A_{E,\fin}^{\times}/E^\times{\widehat \cO_E}^\times$ and the class of $T_j:=(\gamma_j^{-1}\cdot \xi_D) w^{-1}=(\det\gamma_j)\,{}^t\gamma_j^{-1}\xi_Dw^{-1} \gamma_j^{-1}$ correspond to each other. For $h\in \sG_1(\Q)$, let $\sm(h)=\diag(1,h,1)$ be its image in $\sG(\A)$. Since $F$ is left $\sG(\Q)$-invariant and right $\bK_{\fin}$-invariant, 
{\allowdisplaybreaks\begin{align*}
a_{{\rm F}}^{f}(D)\,\cW_l^{\xi_D}(g_\infty)&=\sum_{j\in \cJ}f(\tilde u_j)\int_{V_1(\Q)\bsl V_1(\A)} F(\sn(X)\sm(\tilde u_j)\,g_\infty)\,\psi(-Q_1(\xi_D,X))\,\d X
\\
&=\sum_{j\in \cJ}f(\tilde u_j)\int_{V_1(\Q)\bsl V_1(\A)} F(\sn(X)\sm(\gamma_j h_j \kappa_j)\,g_\infty)\,\psi(-Q_1(\xi_D,X))\,\d X
\\
&=\sum_{j\in \cJ}f(\tilde u_j)\int_{V_1(\Q)\bsl V_1(\A)} F(\sn(\gamma_j^{-1}\,X)\,\sm(h_j)\,g_\infty)\,\psi(-Q_1(\xi_D,X))\,\d X
\\
&=\sum_{j\in \cJ}f(\tilde u_j)\int_{V_1(\Q)\bsl V_1(\A)} F(\sn(X)\,\sm(h_j)\,g_\infty)\,\psi(-Q_1(\gamma_j^{-1} \xi_D,X))\,\d X
\\
&=\sum_{j\in \cJ}f(\tilde u_j)\,a_{{\rm F}}(1;\gamma_j^{-1}\xi_D)\cW_l^{\gamma_j^{-1}\xi_D}(\sm(h_j)g_\infty). 
\end{align*}}Noting that $g\mapsto J(g_\infty,\fz)$ is left $\sG_1(\R)$-invariant and the image of $\gamma_j^{-1}$ in $\sG_(\R)$ equals $h_j$, we easily confirm $\cW_l^{\gamma_j^{-1}\xi_D}(\sm(h_j)g_\infty)=\cW_l^{\xi_D}(g_\infty)$. Thus we obtain the expression:
$$
a_{{\rm F}}^{f}(D)=\sum_{j\in \cJ}f(\tilde u_j)\,a_{{\rm F}}(1;T_jw). 
$$
Set $\cJ_1=\{j\in \cJ|\,\hat{j}=j\}$ and $\cJ_2=\{j\in \cJ|\hat{j}\not=j\}$, where $j\mapsto \hat {j}$ is as in Lemma~\ref{GxiD-class}. For $u\in {\rm Cl}_D$, let $[u]$ denote the ${\rm Gal}(E/\Q)$-orbit of $u$. Then $[u_j]=\{u_j\}$ if $j\in \cJ_1$ and $[u_j]=\{u_j,\bar u_j\}$ if $j\in \cJ_2$. 
Since $\ss\circ \iota(\bar t)=\tilde \sigma\, (\ss\circ \iota(t))\,\tilde \sigma$ for $t\in \A_{E,\fin}^{\times}$ and $\tilde \sigma\in \sG_1^{\xi_D}(\Q)\cap \bK_{1,\fin}^{\xi_*}$, we may suppose $\gamma_{\hat j}=\tilde \sigma \gamma_{j}$ and thus $\gamma_{\hat j}^{-1}\xi_D=\gamma_{j}^{-1}\xi_D$. From Lemma~\ref{GxiD-class}, $e_j=w_D$ if $j\in \cJ_1$ and $e_j=w_D/2$ if $j\in \cJ_2$. Hence
\begin{align*}
a_{{\rm F}}^{f_\chi}(D)&=\frac{1}{w_D}
\sum_{j \in \cJ_1} \tfrac{1}{2}(\chi(u_j)+\chi(\bar u_j))\,a_{{\rm F}}(1;\gamma_j^{-1}\xi_D)
+\frac{2}{w_D}
\sum_{j \in \cJ_2} \tfrac{1}{2}(\chi(u_j)+\chi(\bar u_j))\,a_{{\rm F}}(;\gamma_j^{-1}\xi_D)
\\
&=\frac{1}{w_D} \sum_{j\in \cJ_1} \sum_{u\in [u_j]}\chi(u)\,a_{{\rm F}}(1;\gamma_j^{-1}\xi_D)+\frac{1}{w_D}\sum_{j\in \cJ_2}\sum_{u \in [u_j]}
\chi(u)\,a_{{\rm F}}(1;\gamma_j^{-1}\xi_D)\\
&=\frac{1}{w_D}\sum_{j\in \cJ}\sum_{u\in [u_j]}\chi(u)\,a_{{\rm F}}(1;\gamma_j^{-1} \xi_D)
=\frac{1}{w_D}\sum_{j=1}^{h_D} \chi(c_j)a_{{\rm F}}(1;T_jw).
\end{align*}
Since $Q_1(\xi_D)=D/2$, by the duplication formula of the gamma function, we have {\allowdisplaybreaks\begin{align*}
&\frac{1}{4l^3}\,\Bigl\{(4\pi\sqrt{2|Q_1(\xi_D)|})^{3/2-l}\,\Gamma(2l-1)^{1/2}\Bigr\}^2 \\
&=\frac{1}{4l^3}(4\pi)^{3-2l}\left(\tfrac{|D|}{4}\right)^{3/2-l}\times 4^{3/2-l}\times (2^{2l-2}\pi^{-1/2}\Gamma\left(l-\tfrac{1}{2}\right)\Gamma(l))
\\
&=\frac{2\pi^{-1}}{l^3}(l-3/2)(l-2)(l-1)\times \tfrac{\sqrt{\pi}}{4}(4\pi)^{3-2l}\left(\tfrac{|D|}{4}\right)^{3/2-l}\Gamma(l-3/2)\Gamma(l-2)
\\
&=2\pi^{-1}\,\left(1-\tfrac{3}{2l}\right)\left(1-\tfrac{2}{l}\right)\left(1-\tfrac{1}{l}\right)\, \left(\tfrac{|D|}{4} \right)^{3/2-l}\,c_{l}.
\end{align*}}\end{proof}

Let $S$ be a finite set of prime numbers. For $\cU=\cU_{\chi}$ and $s\in \C$, the measure ${\bf \Lambda}^{\xi_D,\cU}(s)$, denoted by ${\bf \Lambda}^{\xi_D,\chi}(s)$, is given by
\begin{align}
{\bf \Lambda}^{\xi_D,\chi}(s) =\bigotimes_{p\in S}\, \frac{\zeta_p(2)\,\zeta_p(4)}{\zeta_{p}(1)\,L(s+1,{\rm AI}(\chi)_p)} \,
\frac{L\left(\frac{1}{2},\pi_{p}^{\bG}(\nu)\times {\rm AI}(\chi)_p \right)L\left(\frac{1}{2}+s,\pi_p^{\bG}(\nu)\right)}{L(1,\pi_p^{\bG}(\nu),{\rm Ad})}\,\d \mu_{p}^{{\rm Pl}}(\nu),
 \label{LambdaXis}
\end{align}
where $\d \mu_{p}^{{\rm Pl}}(\nu)$ is the spherical Plancherel measure describing the spectral decomposition of $L^2(\bG(\Q_p)/\bG(\Z_p),\d g_p)$. Set ${\bf \Lambda}^{\xi_D,\chi}:={\bf \Lambda}^{\xi_D,\chi}(0)$.  

\begin{cor} \label{cor:Oasympt}
Let $\chi$ be a character of ${\rm Cl}_D=\A_{E,\fin}^\times /E^\times \widehat {\cO_E}^\times$. Let $S$ be a finite set of odd prime numbers such that $p\not\in S$ for all prime divisors $p|D$. Let $\phi=\otimes_p \phi_p$ is any Hecke function such that $phi_p=1_{\bK_p}$ for all $p\not\in S$. Then as $l\in 2\N$ grows to infinity, we have 
\begin{align*}
{d_\chi\,c_{l,D}}
\sum_{{\rm F}\in \cF_l}
\widehat{\phi_S}(\nu_S({\rm F}))\,{L_{\fin}(1/2,{\rm F})}\,\frac{|{\rm R}({\rm F},D,\chi)|^2}{ \langle {\rm F},{\rm F}\rangle}=32\,P(l,D,\chi;\widehat{\phi_S})+O(C^{-l}),
\end{align*}
where $P(l,D,\chi;{\widehat{\phi_S}})$ is equal to $ L_\fin(1,{\rm AI}(\chi))\,{\bf \Lambda}^{\xi_D,\chi}(\widehat{\phi_S})$ if $\chi\not={\bf 1}$, and to 
$$
\biggl(L_\fin(1,\eta_D)\,(\psi(l-1)-\log(4\pi^2))+L_\fin'(1,\eta_D)\biggr)\,{\bf \Lambda}^{\xi_D,{\bf 1}}(\widehat{\phi_S})+L'_{\fin}(1,\eta_D)\,\left(\tfrac{\d}{\d s}|_{s=0}{\bf \Lambda}^{\xi_D,{\bf 1}}(s;\widehat{\phi_S})\right)
$$
 if $\chi={\bf 1}$. 
\end{cor}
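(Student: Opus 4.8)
The plan is to specialize Theorem~\ref{th1.1-Tsud2019} to the test module $\cU=\cU_\chi$ attached to $\chi$ by Lemma~\ref{L:IrredDecomp-cV}, and then to carry out the bookkeeping of the normalizing constants using Lemmas~\ref{L:IrredDecomp-cV} and~\ref{L:BesselPer}. By Lemma~\ref{L:IrredDecomp-cV}, $\cU_\chi$ is irreducible, its space of $\bK_{1,\fin}^{\xi_D*}$-fixed vectors equals $\C f_\chi$, and $\|f_\chi\|_{\sG_1^{\xi_D}}^2=\frac{h_D}{2w_D}\{1+\delta(\chi^2={\bf 1})\}=\frac{h_D}{w_D\,d_\chi}$; hence $\cB(\cU_\chi;\bK_{1,\fin}^{\xi_D*})$ consists of the single unit vector $f_\chi/\|f_\chi\|_{\sG_1^{\xi_D}}$, and by linearity of $f\mapsto\fa_{\rm F}^{f}(D)$ we get $\sum_{f}|\fa_{\rm F}^{f}(D)|^2=\|f_\chi\|_{\sG_1^{\xi_D}}^{-2}\,|\fa_{\rm F}^{f_\chi}(D)|^2$. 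Substituting Lemma~\ref{L:BesselPer} and using the identity ${\bf \Gamma}(l)\,(1-\frac{3}{2l})(1-\frac{2}{l})(1-\frac{1}{l})=1$, an immediate consequence of $x\,\Gamma(x)=\Gamma(x+1)$ at $x=l-\frac32,\,l-2,\,l-1$, the left-hand side of Theorem~\ref{th1.1-Tsud2019} turns into $\frac{2\pi^{-1}w_D^{-2}}{\|f_\chi\|_{\sG_1^{\xi_D}}^{2}}\bigl(\frac{|D|}{4}\bigr)^{3/2-l}c_l$ times the weighted sum over $\cF_l$ appearing in the Corollary.

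Next I would simplify the constant. From the definition of $c_{l,D}$ one reads off $c_l\,(\frac{|D|}{4})^{3/2-l}=\frac{w_D\,h_D}{4}\,c_{l,D}$, so together with $\|f_\chi\|_{\sG_1^{\xi_D}}^{2}=\frac{h_D}{w_D\,d_\chi}$ the entire prefactor collapses to $\frac{d_\chi\,c_{l,D}}{2\pi}$. Multiplying the identity of Theorem~\ref{th1.1-Tsud2019} through by $2\pi$ and using $2\pi\cdot 4(\pi/4)^{-1}=32$, one obtains the left-hand side of the Corollary on one side and $32\{\cdots\}+O(C^{-l})$ on the other, where $\{\cdots\}$ is the braced expression in Theorem~\ref{th1.1-Tsud2019} with $\cU=\cU_\chi$. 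It then remains to identify $\{\cdots\}$ with $P(l,D,\chi;\widehat{\phi_S})$. When $\chi\neq{\bf 1}$ this is immediate: by Lemma~\ref{L:IrredDecomp-cV}, $L_\fin(s,\cU_\chi)=L_\fin(s,{\rm AI}(\chi))$, which is holomorphic at $s=1$ since ${\rm AI}(\chi)$ is either cuspidal or an isobaric sum of two non-trivial Hecke characters of $\Q$; hence ${\rm Res}_{s=1}L_\fin(s,\cU_\chi)=0$, ${\rm CT}_{s=1}L_\fin(s,\cU_\chi)=L_\fin(1,{\rm AI}(\chi))$, only the last summand of $\{\cdots\}$ survives, and it equals ${\bf \Lambda}^{\xi_D,\chi}(\widehat{\phi_S})\,L_\fin(1,{\rm AI}(\chi))$, as asserted.

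For $\chi={\bf 1}$ I would use $L_\fin(s,\cU_{\bf 1})=\zeta(s)\,L_\fin(s,\eta_D)$, whose Laurent expansion at $s=1$ has residue $L_\fin(1,\eta_D)$ and constant term $\gamma\,L_\fin(1,\eta_D)+L_\fin'(1,\eta_D)$, $\gamma$ denoting the Euler--Mascheroni constant. Combining this with $\Gamma_{\cU}'(1)/\Gamma_{\cU}(1)=-\log(2\pi)-\gamma+\frac12\log|D|$, with $D_*'(0)/D_*(0)=0$, and with $\log(\sqrt{8|Q_1(\xi_D)|}\,\pi)=\log(2\pi)+\frac12\log|D|$ (here $Q_1(\xi_D)=D/2$ by Lemma~\ref{L-GxiDclassBiQ}), the bracket multiplying $\psi(l-1)$ in the first summand of $\{\cdots\}$ reduces to $\psi(l-1)-\log(4\pi^2)-\gamma$, and the two contributions $\pm\gamma\,L_\fin(1,\eta_D)\,{\bf \Lambda}^{\xi_D,{\bf 1}}(\widehat{\phi_S})$ coming from the first and the third summands of $\{\cdots\}$ cancel. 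One is then left with exactly the displayed expression for $P(l,D,{\bf 1};\widehat{\phi_S})$.

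No genuine obstacle arises once Theorem~\ref{th1.1-Tsud2019}, Lemma~\ref{L:BesselPer} and Lemma~\ref{L:IrredDecomp-cV} are in hand: the whole argument is a normalization computation. The only point requiring care is the precise tracking of the constants $w_D,h_D,d_\chi$ and of the archimedean gamma factors in the first two steps, and — in the case $\chi={\bf 1}$ — the cancellation of the Euler--Mascheroni constant between the residue and the constant term of $\zeta(s)L_\fin(s,\eta_D)$, which is exactly what pins down the coefficient of $\psi(l-1)$ and the occurrence of $\log(4\pi^2)$ in $P$.
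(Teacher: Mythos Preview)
Your proposal is correct and follows essentially the same route as the paper's proof: specialize Theorem~\ref{th1.1-Tsud2019} to $\cU=\cU_\chi$, use Lemma~\ref{L:IrredDecomp-cV} for $\|f_\chi\|^2$ and $L_\fin(s,\cU_\chi)$, use Lemma~\ref{L:BesselPer} together with the identity ${\bf \Gamma}(l)(1-\tfrac{3}{2l})(1-\tfrac{2}{l})(1-\tfrac{1}{l})=1$ to convert the left-hand side, and then carry out the same bookkeeping (including the cancellation of the Euler--Mascheroni constant when $\chi={\bf 1}$). Your write-up in fact spells out the constant-tracking more explicitly than the paper does.
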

\begin{proof}
This follows from Theorem~\ref{th1.1-Tsud2019}, Lemma~\ref{L:BesselPer} and Lemma~\ref{L:IrredDecomp-cV}. To simplify the formula when $\chi={\bf 1}$, we note the relations $L_\fin(1,\cU_{\chi})=\zeta(s)L_\fin(s,\eta_D)$, 
\begin{align*}
\frac{\Gamma_{\cU}'(1)}{\Gamma_\cU(1)}&=\tfrac{1}{2}\log|D|-\log(2\pi)+\psi(1), &
\frac{D_*(0)}{D_*(0)}&=0, \\
{\rm Res}_{s=1}L_\fin(s,\cU_\chi)&=L_\fin(1,\eta_D), &
{\rm CT}_{s=1}L_\fin(s,\cU_\chi)&=L'_\fin(1,\eta_D)+\gamma_0\,L_\fin(1,\eta_D),
\end{align*}
where $\gamma_0=-\psi(1)$ is the Euler-Mascheroni constant. From these, we easily have the equality
\begin{align*}
&{\rm Res}_{s=1}L_\fin(s,\cU_\chi)\,
\biggl(\psi(l-1)+\frac{\Gamma_{\cU}'(1)}{\Gamma_\cU(1)}-\frac{D_{*}'(0)}{D_*(0)}-\log(\sqrt{8|Q_1(\xi_D)|}\pi) \biggr)
+{\rm CT}_{s=1}L_{\fin}(s,\cU_\chi)
\\
&=(\psi(l-1)-\log(4\pi^2))L_\fin(1,\eta_D)+L'_\fin(1,\eta_D). 
\end{align*}
We also note the relation
$$
\left(1-\tfrac{3}{2l}\right)\left(1-\tfrac{2}{l}\right)\left(1-\tfrac{1}{l}\right)\times {\bf \Gamma}(l)=1.$$
\end{proof}

\section{Proof of main result} 
Recall the notation for Siegel modular forms and ${\bf G}={\bf PGSp}_2$ introduced in \S~\ref{sec:Intro}. As is well-known, there is an exceptional isomorphism ${\bf G}\cong {\bf SO}(Q)$ which yields a linear isomorphism between the spaces of modular forms $S_{l}({\bf Sp}_2(\Z))$ and $S_{l}(\bK_\fin)$ preserving $L$-functions and periods (for a precise statement, see Proposition~\ref{SpO-Isom}), which allows us to transcribe Corollary~\ref{cor:Oasympt} in the language of Siegel modular forms. If we take $S$ to be the empty set, then we obtain Theorem~\ref{thm0} from Corollary~\ref{cor:Oasympt}. In the remaining part of this section, we only focus on the main terms; noting the asymptotic formulas $\tilde {\bf \Gamma}(l)=1+O(l^{-1})$ and $\psi(l-1)=\log l+O(l^{-1})$, we have the following proposition from Corollary~\ref{cor:Oasympt}. 

\begin{prop} \label{GSPasympt}
Let $\chi$ be a character of ${\rm Cl}_D=\A_{E,\fin}^\times /E^\times \widehat {\cO_E}^\times$. Let $S$ be a finite set of odd prime numbers such that $p\not\in S$ for all prime divisors $p|D$. Let $\phi=\otimes_p \phi_p \in \cH({\bf G}(\A_\fin)\sslash {\bf G}(\widehat \Z))$ is any Hecke function such that $\phi_p=1_{{\bf G}_(\Z_p)}$ for all $p\not\in S$. Then as $l\in 2\N$ grows to infinity, we have \begin{align*}
\frac{1}{(\log l)^{\delta(\chi={\bf 1})}}
\sum_{\Phi \in \cF_l}
\widehat{\phi_S}(\nu_S(\Phi))\,{L_{\fin}(1/2,\pi_\Phi)}\,\omega_{l,D,\chi}^{\Phi}\rightarrow  2{\bf \Lambda}^{\chi} (\widehat{\phi_S})\, 
\begin{cases} 
L_\fin(1,\eta_D), \quad(\chi=1),\\
L_\fin(1,{\mathcal{AI}}(\chi)),  \quad (\chi\not=1).
\end{cases}
\end{align*}
\end{prop}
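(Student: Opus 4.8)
The plan is to deduce Proposition~\ref{GSPasympt} from Corollary~\ref{cor:Oasympt} by transporting every object through the exceptional isomorphism ${\bf G}\cong {\bf SO}(Q)$. First I would invoke the (forthcoming) Proposition~\ref{SpO-Isom}, which supplies a linear isomorphism $S_l({\bf Sp}_2(\Z))\cong S_l(\bK_\fin)$ that is Hecke-equivariant, matches spinor $L$-functions $L_\fin(s,\pi_\Phi)=L_\fin(s,{\rm F})$, matches the Satake parameters $\nu_S(\Phi)=\nu_S({\rm F})$, and identifies the periods: concretely it sends a Hecke eigenform $\Phi\in\cF_l$ to some ${\rm F}\in\cF_l$ (abusing notation for the two eigenbases) with $R(\Phi,D,\chi)={\rm R}({\rm F},D,\chi)$ up to the normalizing constants already built into the definitions of $\omega_{l,D,\chi}^\Phi$ and of $\fa_{\rm F}^f(D)$. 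Under this dictionary the left-hand side of Corollary~\ref{cor:Oasympt}, namely $d_\chi c_{l,D}\sum_{\rm F}\widehat{\phi_S}(\nu_S({\rm F}))L_\fin(1/2,{\rm F})|{\rm R}({\rm F},D,\chi)|^2/\langle{\rm F},{\rm F}\rangle$, becomes exactly $\sum_{\Phi\in\cF_l}\widehat{\phi_S}(\nu_S(\Phi))L_\fin(1/2,\pi_\Phi)\,\omega_{l,D,\chi}^\Phi$, because $\omega_{l,D,\chi}^\Phi=c_{l,D}d_\chi|R(\Phi,D,\chi^{-1})|^2/\|\Phi\|^2$ by definition — so I must be slightly careful that the character appearing is $\chi^{-1}$ versus $\chi$, but since $\chi$ ranges over all characters of ${\rm Cl}_D$ this is harmless, and one simply records the match.

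Next I would handle the archimedean/combinatorial constants. Corollary~\ref{cor:Oasympt} produces the factor $32\,P(l,D,\chi;\widehat{\phi_S})+O(C^{-l})$ on the right, where $P$ contains the di-gamma term $\psi(l-1)$ and the constant $32$. The target formula in Proposition~\ref{GSPasympt} has the much cleaner shape $2{\bf\Lambda}^\chi(\widehat{\phi_S})\cdot L_\fin(1,\eta_D)$ (resp.\ $L_\fin(1,{\mathcal{AI}}(\chi))$) after dividing by $(\log l)^{\delta(\chi={\bf 1})}$. So the step here is: (i) when $\chi\neq{\bf 1}$, $P(l,D,\chi;\widehat{\phi_S})=L_\fin(1,{\rm AI}(\chi))\,{\bf\Lambda}^{\xi_D,\chi}(\widehat{\phi_S})$ exactly, with no growing factor, and one checks $32\,{\bf\Lambda}^{\xi_D,\chi}=2\,{\bf\Lambda}^\chi$ — i.e.\ the orthogonal-side measure ${\bf\Lambda}^{\xi_D,\chi}$ of \eqref{LambdaXis} equals $\tfrac1{16}{\bf\Lambda}^\chi$ after accounting for the constant factor $16=(\zeta_p(2)\zeta_p(4)/\zeta_p(1))$-normalization differences and the identification $\pi_p^{\bG}(\nu)\leftrightarrow\pi_p^{\rm ur}(\nu)$, so that $32\,P=2{\bf\Lambda}^\chi(\widehat{\phi_S})L_\fin(1,{\mathcal{AI}}(\chi))$ as claimed; (ii) when $\chi={\bf 1}$, $P$ has the two terms $\bigl(L_\fin(1,\eta_D)(\psi(l-1)-\log(4\pi^2))+L'_\fin(1,\eta_D)\bigr){\bf\Lambda}^{\xi_D,{\bf 1}}(\widehat{\phi_S})+L'_\fin(1,\eta_D)\bigl(\tfrac{\d}{\d s}|_{s=0}{\bf\Lambda}^{\xi_D,{\bf 1}}(s;\widehat{\phi_S})\bigr)$, so dividing by $\log l$ and using $\psi(l-1)=\log l+O(l^{-1})$ kills all terms not multiplied by $\psi(l-1)$; the surviving limit is $32\,L_\fin(1,\eta_D)\,{\bf\Lambda}^{\xi_D,{\bf 1}}(\widehat{\phi_S})=2\,L_\fin(1,\eta_D)\,{\bf\Lambda}^{\bf 1}(\widehat{\phi_S})$. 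The error $O(C^{-l})$ is $o((\log l)^{-\delta(\chi={\bf 1})})$ in both cases, so it disappears in the limit.

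Finally I would record the bookkeeping needed to pass from the adelic Hecke function on ${\bf SO}(Q)$ to the one on ${\bf G}={\bf PGSp}_2$: the isomorphism $\cH({\bf G}(\A_\fin)\sslash{\bf G}(\widehat\Z))\cong\cH(\sG(\A_\fin)\sslash\bK_\fin)$ is the one induced by ${\bf G}\cong{\bf SO}(Q)$ at each place, identifying $\fX_p/W(C_2)$ on both sides compatibly with the Satake parametrizations fixed in \S~\ref{sec:Intro} and \S~\ref{sec:Othogonal}, so that $\widehat{\phi_S}$ on $\fX_S$ is literally the same function on both sides and $\mu_p^{\rm Pl}$ agrees. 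After this the chain ``Corollary~\ref{cor:Oasympt} $\Rightarrow$ Proposition~\ref{GSPasympt}'' is immediate.

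The main obstacle is not any one estimate — all the hard analysis is already packaged in Corollary~\ref{cor:Oasympt} — but rather the careful reconciliation of normalizations across the exceptional isomorphism: matching the precise constants $c_{l,D}$, $d_\chi$, $w_D$, $h_D$, the factor $32$ versus $2$, and the two presentations \eqref{LambdaXis} and the definition of ${\bf\Lambda}_S^\chi$ of the local measures, together with checking that ``$\chi$'' on one side corresponds to ``$\chi$'' (not $\chi^{-1}$ or $\chi^\sigma$) on the other. This is exactly the sort of bookkeeping the introduction promises is ``collected in \S~\ref{sect:Bookkeep}'', so in the write-up I would cite Proposition~\ref{SpO-Isom} for the $L$-function-and-period-preserving isomorphism and then simply verify the constant $32\leftrightarrow 2$ and the measure identification ${\bf\Lambda}^{\xi_D,\chi}\leftrightarrow\tfrac1{16}{\bf\Lambda}^\chi$ by comparing \eqref{LambdaXis} termwise with the definition of ${\bf\Lambda}_S^\chi$, noting in particular that $L(1,{\mathcal{AI}}(\chi)_p)=L(s+1,{\rm AI}(\chi)_p)|_{s=0}$ and that $L(1,\pi_p^{\rm ur}(\nu),{\rm Ad})=L(1,\pi_p^{\bG}(\nu),{\rm Ad})$ under the identification $\pi_p^{\rm ur}(\nu)\cong\pi_p^{\bG}(\nu)$.
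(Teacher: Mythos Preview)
Your overall strategy is exactly the paper's: transport Corollary~\ref{cor:Oasympt} through Proposition~\ref{SpO-Isom}, then use $\psi(l-1)=\log l+O(l^{-1})$ and let the $O(C^{-l})$ error die.

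There is one concrete bookkeeping slip. You write that one must check ${\bf\Lambda}^{\xi_D,\chi}=\tfrac{1}{16}{\bf\Lambda}^\chi$ so that $32$ becomes $2$. But if you actually carry out the termwise comparison you propose, you will find that ${\bf\Lambda}^{\xi_D,\chi}(0)$ in \eqref{LambdaXis} and ${\bf\Lambda}_S^\chi$ from \S\ref{sec:Intro} are \emph{literally the same measure}: same $\zeta_p$-factors, same $L(1,{\mathcal{AI}}(\chi)_p)$ in the denominator, same $L$-ratio in the integrand, and the Plancherel measures agree under $\pi_p^{\bG}(\nu)\cong\pi_p^{\rm ur}(\nu)$. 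The factor $16$ does not come from the measures; it comes from the norm identity $\|\Phi\|^2=16\,\|{\rm F}\|^2$ in Proposition~\ref{SpO-Isom}. Concretely, since $R(\Phi,D,\chi)={\rm R}({\rm F},D,\chi)$ and $\|\Phi\|^2=16\langle{\rm F},{\rm F}\rangle$, one has
\[
d_\chi\,c_{l,D}\,\frac{|{\rm R}({\rm F},D,\chi)|^2}{\langle{\rm F},{\rm F}\rangle}
=16\,\omega_{l,D,\chi^{-1}}^{\Phi},
\]
so the left side of Corollary~\ref{cor:Oasympt} is $16$ times the sum in Proposition~\ref{GSPasympt}, and $32/16=2$ gives the constant you want with ${\bf\Lambda}^{\xi_D,\chi}={\bf\Lambda}^\chi$ unchanged. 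Once you correct this, your argument goes through and coincides with the paper's.
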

If the non-negativity of the central values $L_\fin(1/2,\pi_\Phi)$ were available, we would obtain the limit formula in Theorem~\ref{MAINTHM} for the average over $\cF_l$ directly from this by a familiar approximation argument ({\it cf}. \cite{Serre}). But this expectation seems to be very hard to be realized, due to the existence of CAP forms. Let $\Phi={\rm SK}(f) \in S_{l}({\bf Sp}_2(\Z))$ be the Saito-Kurokawa lifting from an elliptic Hecke-eigen cuspform $f$ on ${\bf SL}_2(\Z)$ of weight $2l-2$. Then the following formula is well known.
$$
L_{\fin}(s,\pi_\Phi)=\zeta(s+1/2)\zeta(s-1/2)L_\fin(s,f).
$$
Since the sign of the functional equation of $f$ is minus, $L_\fin(1/2,f)=0$. Noting this, we obtain 
$$
L_{\fin}(1/2,\pi_\Phi)=\zeta(0)\,L'_{\fin}(1/2,f).
$$
At present, our knowledge on the sign of this quantity is very restrictive. However, concerning the size of this, the trivial bound $|L_{\fin}'(1/2,f)|\ll_{\e} l^{1/2+\e}$ immediately gives us 
\begin{align}
|L_{\fin}(1/2,\pi_{{\rm SK}(f)})|\ll_{\e} l^{1/2+\epsilon}, \quad f\in \cH_{2l-2},
 \label{TrivBoundL}
\end{align}
where $\cH_{2l-2}$ is the set of the normalized Heck eigen elliptic cuspforms on ${\bf SL}_2(\Z)$ of weight $2l-2$. From \cite[\S 5.3]{KST}, we quote the following formula for $\Phi={\rm SK}(f)$. $$
\omega_{l,D,\chi}^{\Phi}=\delta(\chi={\bf 1})\,\frac{(48\pi)^2h_{D}}{w_{D}(l-1)(l-2)}\frac{\Gamma(2l-3)}{(4\pi)^{2k-3}\langle f, f\rangle}\frac{L_\fin(1/2,f\times \eta_{D})}{L_\fin(1,f)}.
$$
To prove Theorem~\ref{MAINTHM}, we follow the same strategy employed by \cite{KST} and \cite{KYW}. Indeed, we showed in \cite[\S5.2]{Tsud2019} that the argument works for a general orthogonal group conditionally on two hypothesis \cite[(1.7) and (1.8)]{Tsud2019}. For our $(V,Q)$, due to the deep results on automorphic representations of ${\bf GSp}_2$, we can make the argument unconditional. First, the following lemma, which is a direct consequence of \cite[Proposition 5.8]{KST} and \eqref{TrivBoundL}, implies the statement \cite[(1.8)]{Tsud2019} is true. 
\begin{lem} \label{SKnegligible}
 As $l\in 2\N$ grows to infinity, 
\begin{align*}
\frac{1}{(\log l)^{\delta(\chi={\bf 1})}}\,\sum_{f\in \cH_{2l-2}} |L_\fin(1/2,\pi_{{\rm SK}(f)})|\,\omega_{l,D,\chi^{-1}}^{{\rm SK}(f)}\longrightarrow 0. 
\end{align*} 
\end{lem}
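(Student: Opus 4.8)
The statement asserts that the Saito--Kurokawa contribution to the weighted sum is negligible after normalising by $(\log l)^{\delta(\chi={\bf 1})}$. The plan is to reduce the question to estimating a single sum over $\cH_{2l-2}$, using the two explicit formulas quoted just above: the bound $|L_{\fin}(1/2,\pi_{{\rm SK}(f)})|\ll_{\e}l^{1/2+\e}$ from \eqref{TrivBoundL}, and the evaluation of $\omega_{l,D,\chi^{-1}}^{{\rm SK}(f)}$ from \cite[\S5.3]{KST} in terms of $L_\fin(1/2,f\times\eta_D)/L_\fin(1,f)$ and $\Gamma(2l-3)/((4\pi)^{2l-3}\langle f,f\rangle)$. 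Note first that if $\chi\not={\bf 1}$ the factor $\delta(\chi={\bf 1})$ kills $\omega_{l,D,\chi^{-1}}^{{\rm SK}(f)}$ entirely, so the left-hand side vanishes identically and there is nothing to prove; hence I may assume $\chi={\bf 1}$, and the normalising factor is $\log l$.

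With $\chi={\bf 1}$, substituting the two formulas shows that
\begin{align*}
\sum_{f\in\cH_{2l-2}}|L_\fin(1/2,\pi_{{\rm SK}(f)})|\,\omega_{l,D,{\bf 1}}^{{\rm SK}(f)}
\ll_{\e} l^{1/2+\e}\,\frac{(48\pi)^2 h_D}{w_D(l-1)(l-2)}\sum_{f\in\cH_{2l-2}}\frac{\Gamma(2l-3)}{(4\pi)^{2l-3}\langle f,f\rangle}\,\frac{L_\fin(1/2,f\times\eta_D)}{L_\fin(1,f)}.
\end{align*}
The quantity $\Gamma(2l-3)/((4\pi)^{2l-3}\langle f,f\rangle)$ is (up to harmless normalisation) the Petersson harmonic weight attached to $f\in\cH_{2l-2}$. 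Thus the sum over $f$ is, up to constants depending only on $D$, a Petersson-weighted first moment of the twisted $L$-value $L_\fin(1/2,f\times\eta_D)/L_\fin(1,f)$ over weight $2l-2$ forms on ${\bf SL}_2(\Z)$. This is exactly the content of \cite[Proposition 5.8]{KST}, which gives such a weighted average as $\ll \log l$ (the $\log l$ coming precisely from $L_\fin(1,\eta_D)$-type main terms and the $1/L_\fin(1,f)$ factor, whose reciprocal is $\ll \log l$ on average, or is handled directly in \cite{KST}); the explicit main-term shape is not needed, only that it is $O(\log l)$.

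Combining, the displayed right-hand side is $\ll_{\e} l^{1/2+\e}\cdot l^{-2}\cdot \log l = O(l^{-3/2+\e})$, which tends to $0$ even without dividing by $\log l$; a fortiori the normalised quantity in the statement tends to $0$. The main obstacle — and the only non-elementary input — is the first-moment bound \cite[Proposition 5.8]{KST} for the Petersson-weighted average of $L_\fin(1/2,f\times\eta_D)/L_\fin(1,f)$; everything else is bookkeeping with the Stirling-type ratio $\Gamma(2l-3)/((4\pi)^{2l-3})$ against $\langle f,f\rangle$ and absorbing $D$-dependent constants. I would therefore organise the proof as: (i) dispose of the case $\chi\not={\bf 1}$ trivially; (ii) insert the two explicit formulas; (iii) identify the resulting $f$-sum as a harmonic first moment and invoke \cite[Proposition 5.8]{KST} together with \eqref{TrivBoundL}; (iv) conclude from the power-of-$l$ gain.
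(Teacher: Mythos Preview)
Your proposal is correct and follows essentially the same approach as the paper, which simply asserts that the lemma is ``a direct consequence of \cite[Proposition 5.8]{KST} and \eqref{TrivBoundL}'' without further elaboration. Your write-up fills in precisely the bookkeeping the paper leaves implicit: the case $\chi\neq{\bf 1}$ is trivial by the $\delta(\chi={\bf 1})$ factor, and for $\chi={\bf 1}$ one inserts the explicit formula for $\omega_{l,D,{\bf 1}}^{{\rm SK}(f)}$, pulls out the convexity bound $l^{1/2+\e}$, and bounds the remaining harmonic first moment via \cite[Proposition 5.8]{KST}, with the factor $(l-1)^{-1}(l-2)^{-1}$ yielding the power saving.
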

This lemma also imples the second limit formula in Theorem~\ref{MAINTHM}. The truth of the statement \cite[(1.7)]{Tsud2019}, which boils down to the statement
\begin{align}
\text{
$L_{\fin}(1/2,\pi_\Phi)\geq 0$ for all $\Phi \in \cF_l^{\flat}$. }
 \label{NonNegative}
\end{align}
is known by \cite[Theorem 5.2.4]{PSS}. Thus we see that \cite[(1.7) and (1.8)]{Tsud2019} are satisfied. Starting from Proposition~\ref{GSPasympt}, by the same argument as in \cite[\S5.2]{Tsud2019}, we complete the proof of Theorem~\ref{MAINTHM}. \qed 

Since $L_{\fin}(1,\eta_D)\not=0$ and $L_{\fin}(1,{\mathcal{AI}}(\chi))\not=0$ if $\chi\not={\bf 1}$, Corollary~\ref{MAINTHM2} is obtained from Theorem~\ref{MAINTHM} by approximating the characteristic function by a continuous function.

\subsection{Book-keeping for exceptional isomorphism} \label{sect:Bookkeep}
For convenience, we collect miscellaneous facts which is useful to derive Proposition~\ref{GSPasympt} from Corollary~\ref{cor:Oasympt}. For our purpose, it is convenient to use the $5$-dimensional quadratic space
$$
V=\left\{Y=\left[\begin{smallmatrix} X & -x' w \\ x'' w & {}^t X\end{smallmatrix} \right]|\,X\in V_1,\,x',x'' \in \Q \right\}
$$
over $\Q$ with the quadratic from $q(Y)=\frac{1}{2}\det(Y^2)$ (\cite[\S 6.3]{Liu}). By letting the group ${\bf GSp}_2$ act on $V$ as $\rho(g)Y=gYg^{-1}$, we have a surjective $\Q$-morphism $\rho:{\bf GSp}_2\rightarrow {\rm SO}(V)$ whose kernel coincides with the center of ${\bf GSp}_2$. Thus $\rho$ realizes the exceptional isomorphism ${\bf PGSp}_2 \cong {\rm SO}(V)$. Set 
\begin{align*}
\e_0=\left[\begin{smallmatrix} 
{} & {\begin{smallmatrix} 0 & 1 \\ -1 & 0 \end{smallmatrix}} 
\\ {\begin{smallmatrix} {0} & {0}\\ {0} & {0} \end{smallmatrix}} & {} \end{smallmatrix}\right], 
\e_0'=\left[\begin{smallmatrix} {} & {\begin{smallmatrix} 0 & 0 \\ 0 & 0 \end{smallmatrix}} \\ \begin{smallmatrix} 0 & -1 \\ 1 & 0 \end{smallmatrix} & {} \end{smallmatrix} \right],
\e_1=\left[\begin{smallmatrix}  \begin{smallmatrix} 0 & 1 \\ 0 & 0 \end{smallmatrix} & {} \\ {} & \begin{smallmatrix} 0 & 0 \\ 1 & 0 \end{smallmatrix} \end{smallmatrix} \right]
\e_1'=\left[\begin{smallmatrix}  \begin{smallmatrix} 0 & 0 \\ 1 & 0 \end{smallmatrix} & {} \\ {} & \begin{smallmatrix} 0 & 1 \\ 0 & 0 \end{smallmatrix} \end{smallmatrix} \right], 
v=\left[\begin{smallmatrix}  \begin{smallmatrix} 1 & {0} \\ 0 & -1 \end{smallmatrix} & {} \\ {} & \begin{smallmatrix} 1 & 0 \\ 0 & -1 \end{smallmatrix} \end{smallmatrix} \right].
\end{align*}
Then these vectors form a $\Q$-basis of $V$ such that 
$$
q(x_1\e_1+x_0\e_0+zv+y_0\e_0'+y_1\e_1')=
[x_1,x_0,z,y_0,y_1]\, Q\, 
\,\left[\begin{smallmatrix} x_1 \\ x_0 \\ z \\ y_0 \\ y_1\end{smallmatrix}\right]
$$
with $Q$ given by \eqref{matrixQ}. We use the matrix realization of ${\rm O}(V)$ as ${\bf O}(Q)$ identifying an element $\tilde h\in {\rm O}(V)$ with the matrix $h\in {\bf O}(Q)$ determined by the relation
$$
[\tilde h(\e_1),\tilde h(\e_0),\tilde h(v), \tilde h(\e_0'),\tilde h(\e_1')]
=[\e_1,\e_0,v,\e_0',\e_1']\,h.  
$$
The particular elements $\sn(X)$ for $X\in \Q^3$ and $\sm(t;h)$ for $t\in \Q^\times$, $h\in \sG_1:={\bf O}(Q_1)$ of the matrix group $\sG:={\bf O}(Q)$ is defined as 
\begin{align*}
\sm(r;h)&={\rm diag}(r,h,r^{-1}), \quad \sn(X)=\left[\begin{smallmatrix} 1 & {-{}^tX Q_1} & {-2^{-1}Q_1[X]} \\ {0} & {1_3} & {X} \\ {0} & {0} & {1} \end{smallmatrix} \right].
\end{align*}
Then for $\rho:{\bf GSp}_2 \rightarrow {\bf SO}(Q)$, the following formula is easily confirmed
\begin{align}
\rho\left(\left[\begin{smallmatrix} 1_2 & B \\ {} & 1_2 \end{smallmatrix} \right] \right)&=\sn(\left[\begin{smallmatrix} b_1 \\ -b_2 \\ -b_3 \end{smallmatrix} \right]), \quad B=\left[\begin{smallmatrix} b_1 & b_2 \\ b_2 & b_3 \end{smallmatrix}\right], 
 \label{rhoOnSP1}
\\
\rho\left(\left[\begin{smallmatrix} A &  \\ {} & \nu {}^t A^{-1} \end{smallmatrix} \right] \right)
&=\sm(\nu^{-1} \det(A); \ss(A)), \quad A\in {\bf GL}_2,\,\nu \in {\bf GL}_1,
 \label{rhoOnSP2}
\end{align}
where $\ss(A)$ is the matrix given by \eqref{ssFormula}. From thses, the Siegel parabolic subgroup of ${\bf GSp}_2$ corresponds to the maximal parabolic subgroup $\sP$ stabilizing the line $\Q\e_1$.  

 There exists a unique isomorphism $\jj_{\cD}:\fh_2\rightarrow \cD$ such that $\jj_{\cD}(\sqrt{-1} 1_2)=\fz_0$ and 
\begin{align}
\jj_{\cD}(g.Z)=\rho(g)\langle \jj_{\cD}(Z)\rangle, \quad g\in {\bf GSp}_2(\R),\,Z\in \fh_2.
\label{Rho-RhocD}
\end{align}
Therefore, $\rho$ maps the maximal compact subgroup 
\begin{align}
\left\{\left[\begin{smallmatrix}A & B \\ -B & A \end{smallmatrix}\right]|\,A+\sqrt{-1}B\in U(2)\right\}
 \label{McompSP}
\end{align}
of ${\bf GSp}_2(\R)^0$ onto the maximal compact subgroup $\bK_{\infty}={\rm Stab}_{\sG(\R)^0}(\fz_0)$ of $\sG(\R)^0$.  

\begin{lem} \label{L:jcD}
We have 
$$
\jj_{\cD}(Z)=\left[\begin{smallmatrix} z_1 \\ -z_2 \\ -z_3 \end{smallmatrix} \right] \quad \text{for $Z=\left[\begin{smallmatrix} z_1 & z_2 \\ z_2 & z_3\end{smallmatrix} \right] \in \fh_2$}.
$$
For $g=\left[\begin{smallmatrix} A & B \\ C & D \end{smallmatrix}\right]\in {\bf GSp}_2(\R)^{0}$, we have 
$$
\det(CZ+D)=J(\rho(g), \jj_{\cD}(Z)), \quad Z\in \fh_2. 
$$
\end{lem}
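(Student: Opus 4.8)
The plan is to exhibit an explicit candidate for $\jj_{\cD}$ and check that it has the two properties characterising it. Define $\phi:\fh_2\to V_1(\C)$ by $\phi(Z)={}^{t}(z_1,-z_2,-z_3)$ for $Z=\left[\begin{smallmatrix}z_1 & z_2\\ z_2 & z_3\end{smallmatrix}\right]$; equivalently, under the identification of $V_1$ with symmetric $2\times 2$ matrices used in \S\ref{sec:Othogonal} (namely $X\mapsto Xw^{-1}$) one simply has $\phi(Z)=Z$. Since $(z_1,z_2,z_3)\mapsto(z_1,-z_2,-z_3)$ is an affine-linear automorphism of $\C^{3}$ under which the two inequalities cutting out $\cD$ become exactly the condition $\Im Z\gg 0$, the map $\phi$ is a biholomorphism of $\fh_2$ onto $\cD$, and $\phi(\sqrt{-1}\,1_2)=\fz_0$ by inspection. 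By the uniqueness of $\jj_{\cD}$ it therefore suffices to prove the equivariance $\phi(g.Z)=\rho(g)\langle\phi(Z)\rangle$. I will in fact establish, for $g=\left[\begin{smallmatrix}A&B\\C&D\end{smallmatrix}\right]$, the sharper identity
\[
\rho(g)\left[\begin{smallmatrix}-Q_1[\phi(Z)]/2\\ \phi(Z)\\ 1\end{smallmatrix}\right]=\det(CZ+D)\,\left[\begin{smallmatrix}-Q_1[\phi(g.Z)]/2\\ \phi(g.Z)\\ 1\end{smallmatrix}\right],
\]
which, compared with \eqref{ActionGD}, yields simultaneously $\phi=\jj_{\cD}$ (the first assertion) and $J(\rho(g),\jj_{\cD}(Z))=\det(CZ+D)$ (the second assertion).

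Both sides of this identity obey the cocycle rule in $g$ — the left by multiplicativity of $\rho$, the right from $\det(C_{g_1g_2}Z+D_{g_1g_2})=\det(C_{g_1}(g_2.Z)+D_{g_1})\,\det(C_{g_2}Z+D_{g_2})$ together with the equivariance already proved for $g_2$ — so it is enough to check it on a generating set. Since positive scalars act trivially on $\fh_2$ and lie in the kernel of $\rho$, one may restrict to $g\in{\bf Sp}_2(\R)$, which is generated by the unipotent radical $n(B)=\left[\begin{smallmatrix}1_2&B\\0&1_2\end{smallmatrix}\right]$ and the Levi $m(A)=\left[\begin{smallmatrix}A&0\\0&{}^{t}A^{-1}\end{smallmatrix}\right]$ of the Siegel parabolic together with $J_0=\left[\begin{smallmatrix}0&1_2\\-1_2&0\end{smallmatrix}\right]$. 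For $n(B)$, \eqref{rhoOnSP1} gives $\rho(n(B))=\sn(X_B)$ with $X_B={}^{t}(b_1,-b_2,-b_3)$, and the explicit unipotent matrix $\sn(X)$ sends an isotropic vector ${}^{t}[-Q_1[\fz]/2,\fz,1]$ to ${}^{t}[-Q_1[\fz+X]/2,\fz+X,1]$; since $n(B).Z=Z+B$, $\det(CZ+D)=1$, and $\phi(Z+B)=\phi(Z)+X_B$, the identity follows. For $m(A)$, \eqref{rhoOnSP2} gives $\rho(m(A))=\sm(\det A;\ss(A))$, and $\sm(r;h)=\diag(r,h,r^{-1})$ sends ${}^{t}[-Q_1[\fz]/2,\fz,1]$ to $r^{-1}\,{}^{t}[-Q_1[rh\fz]/2,rh\fz,1]$ (using $h\in{\bf O}(Q_1)$); since $m(A).Z=AZ\,{}^{t}A$, $\det(CZ+D)=(\det A)^{-1}$, and in the symmetric-matrix model $\ss(A)$ acts by $S\mapsto(\det A)^{-1}AS\,{}^{t}A$, one gets $\det(A)\,\ss(A)\phi(Z)=AZ\,{}^{t}A=\phi(m(A).Z)$, so the identity follows again.

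The remaining, and most delicate, case is $g=J_0$. Here I compute $\rho(J_0)$ directly from $\rho(J_0)Y=J_0YJ_0^{-1}$ on the basis $\e_1,\e_0,v,\e_0',\e_1'$: one finds that $\rho(J_0)$ interchanges $\e_1\leftrightarrow\e_1'$ and $\e_0\leftrightarrow\e_0'$ and fixes $v$, so it carries the isotropic vector ${}^{t}[-Q_1[\fz]/2,\fz,1]$ to ${}^{t}[1,\iota(\fz),-Q_1[\fz]/2]$, where $\iota$ denotes transposition on $V_1$. Renormalising the last coordinate shows that $J(\rho(J_0),\fz)=-Q_1[\fz]/2$ and that $\rho(J_0)$ acts by the Cayley-type transform $\fz\mapsto -2\,\iota(\fz)/Q_1[\fz]$; passing to the symmetric-matrix model, where $Q_1[S]=-2\det S$ and $\iota$ becomes $S\mapsto-(\det S)S^{-1}$, this transform is simply $S\mapsto-S^{-1}$, so that $\rho(J_0)\langle\phi(Z)\rangle=-Z^{-1}=J_0.Z$ with factor $\det Z=\det(-Z)=\det(CZ+D)$, which is exactly the required identity for $J_0$. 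The main obstacle of the whole argument is precisely this last computation: the embedding $V_1\hookrightarrow V$, the quadratic form $q$ on $V$, and the matrices $w$, $J_0$ each carry sign conventions, so pinning down the exact image of the basis and the constant in the Cayley transform demands careful book-keeping. I would also note, as a minor point of interpretation, that since $\rho$ factors through the centre the equality $\det(CZ+D)=J(\rho(g),\jj_{\cD}(Z))$ is meant for representatives with $\nu(g)=1$ — which is all that the application to Siegel cusp forms in \S\ref{sec:Intro} uses — while for a general $g\in{\bf GSp}_2(\R)^{0}$ the right-hand side equals $\nu(g)^{-1}\det(CZ+D)$, the discrepancy being harmless as ${\bf GSp}_2(\R)^{0}={\bf Sp}_2(\R)\cdot\R^{\times}_{>0}$.
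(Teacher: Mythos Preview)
Your proof is correct and takes a genuinely different route from the paper's own argument. The paper first establishes the explicit formula for $\jj_{\cD}$ by writing an arbitrary $Z\in\fh_2$ as the Siegel-parabolic translate of $\sqrt{-1}\,1_2$ and applying \eqref{rhoOnSP1}--\eqref{rhoOnSP2}; for the automorphy-factor identity it then uses the Iwasawa decomposition ${\bf GSp}_2(\R)^0=P\cdot K$, checks the parabolic piece directly, and handles the maximal compact \eqref{McompSP} by observing that both sides are characters of the connected group $U(2)$, hence agree once their differentials at the identity coincide. You instead prove a single ``sharp'' identity on isotropic vectors and reduce via the cocycle law to a generating set $\{n(B),\,m(A),\,J_0\}$ of ${\bf Sp}_2(\R)$, verifying the Weyl element $J_0$ by an explicit computation of $\rho(J_0)$ on the basis $\e_1,\e_0,v,\e_0',\e_1'$. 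Your approach is more elementary in that it avoids any infinitesimal computation and yields both assertions of the lemma in one stroke; the paper's approach, by contrast, never needs to compute $\rho$ outside the Siegel parabolic (where \eqref{rhoOnSP1}--\eqref{rhoOnSP2} are already available), trading that for a short Lie-algebra calculation. Your closing remark on the similitude factor is well taken: the identity $\det(CZ+D)=J(\rho(g),\jj_{\cD}(Z))$ holds as stated only for $\nu(g)=1$ (one checks $g=\lambda\,1_4$), and the paper's own computation ``$J_{\fh_2}(g,\sqrt{-1}\,1_2)=\nu(\det A)^{-1}$'' for parabolic $g$ reflects the same slip, since $\det(\nu\,{}^{t}A^{-1})=\nu^2(\det A)^{-1}$; as you note, this is immaterial for the intended application to $S_l({\bf Sp}_2(\Z))$.
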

\begin{proof} 
By the Iwasawa decomposition of ${\bf GSp}_2(\R)$, for any element $Z\in \fh_2$ we can find $A=\left[\begin{smallmatrix} a & b \\ c & d\end{smallmatrix}\right] \in {\bf GL}_2(\R)$ and $S=\left[\begin{smallmatrix} s_1 & s_2 \\ s_2 & s_3 \end{smallmatrix}\right]\in {\rm Sym}^{2}(\R)$ such that 
$$
Z=\left[\begin{smallmatrix} A & {} \\ {} & {}^t A^{-1}\end{smallmatrix}\right] \,\left[\begin{smallmatrix} 1_2& {S} \\ {} & 1_2\end{smallmatrix}\right].(\sqrt{-1}1_2).
$$
By a computation, 
\begin{align*}
\sm(\det A; \ss(A))\,\sn(\left[\begin{smallmatrix} s_1 \\ -s_2 \\ -s_3\end{smallmatrix}\right])\left[\begin{smallmatrix} -1 \\ \sqrt{-1} \\ 0 \\ -\sqrt{-1} \\ 1 \end{smallmatrix}\right]
&=\det(A)^{-1} \left[\begin{smallmatrix}* \\ z_1 \\ -z_2 \\ -z_3 \\ 1 \end{smallmatrix} \right], \\
Z=A\,{}^tAi +AS\,{}^t A=\left[\begin{smallmatrix} z_1 & z_2 \\ z_2 & z_3\end{smallmatrix}\right]
\end{align*}
with 
\begin{align*}
z_1&=(a^{2}s_1+2abs_2+b^2 s_3)+\sqrt{-1}(a^2+b^2), \\
z_2&=(ac s_1+(ad+bc)s_2+bds_2)+\sqrt{-1}(ac+bd), \\ 
z_3&=(c^2s_1+2dcs_2+d^2 s_3)+\sqrt{-1}(c^2+d^2). 
\end{align*}
Hence from \eqref{Rho-RhocD}, \eqref{rhoOnSP1}, and \eqref{rhoOnSP2}, 
\begin{align*}
\jj_{\cD}(Z)
&=\rho(\left[\begin{smallmatrix} A & {} \\ {} & {}^t A^{-1}\end{smallmatrix}\right] \left[\begin{smallmatrix} 1_2& {S} \\ {} & 1_2\end{smallmatrix}\right])\,\langle \fz_0 \rangle=\sm(\det A; \ss(A))\,\sn(\left[\begin{smallmatrix} s_1 \\ -s_2 \\ -s_3\end{smallmatrix}\right])\,\langle \fz_0 \rangle=\left[\begin{smallmatrix} z_1 \\ -z_2 \\ -z_3 \end{smallmatrix} \right].
\end{align*}   

Set $J_{\fh_2}(g,Z)=\det(CZ+D)$. We have $J_{\fh_2}(g_1g_2,Z)=J_{\fh_2}(g_1,g_2.Z)J_{\fh_2}(g_2,Z)$ and a similar automorphy relation for $J$. By the Iwasawa decompositions on ${\bf GSp}_2(\R)^0$ and $\sG(\R)^0$, it suffices to show the relation $J_{\fh_2}(g,\sqrt{-1}\,1_2)=J(\rho(g),\fz_0)$ for 
\begin{align}
g=\left[\begin{smallmatrix} A &  \\ {} & \nu\, {}^tA^{-1} \end{smallmatrix} \right]\left[\begin{smallmatrix} 1_2 & B \\ {} & 1_2 \end{smallmatrix} \right]
 \label{SiegelParelements}
\end{align}
and for elements $g$ belonging to \eqref{McompSP}. For $g$ of the form \eqref{SiegelParelements} we easily have $J_{\fh_2}(g,\sqrt{-1}\,1_2)=\nu (\det A)^{-1}$ and $J(\rho(g),\fz_0)=\nu(\det A)^{-1}$ by means of \eqref{rhoOnSP1} and \eqref{rhoOnSP2}. Since $g\mapsto J_{\fh_2}(g,\sqrt{-1}\,1_2)$ and $g\mapsto J(\rho(g),\fz_0)$ are characters of the compact connected group \eqref{McompSP} isomorphic to $U(2)$, it suffices to show 
$$
\tfrac{\d}{\d t}\bigm|_{t=0}J_{\fh_2}(\exp(tH),\sqrt{-1}\,1_2)=\tfrac{\d}{\d t}\bigm|_{t=0} J(\rho(\exp(t H)),\fz_0),
$$
where $H$ is an element in the Lie algebra of \eqref{McompSP} of the form 
$$H=\left[\begin{smallmatrix} {} & {\begin{smallmatrix} x_1 & 0 \\ 0 & x_2 \end{smallmatrix}} \\ {\begin{smallmatrix} -\tau_1 & 0 \\ 0 & -\tau_2 \end{smallmatrix}} & {} \end{smallmatrix} \right]$$
 with $\tau_1,\tau_2\in \R$. By a direct computation,
$$
\d \rho(H)=\left[\begin{smallmatrix} 
0 & \tau_2 & 0 & -\tau_1 & 0 \\
-\tau_2 & 0 &0& 0& \tau_1 \\
0& 0 &0& 0& 0 \\
\tau_1& 0& 0& 0&  -\tau_2 \\
0& -\tau_1 &0 &\tau_2 &0 
\end{smallmatrix} \right]. 
$$
By taking the differential of \eqref{ActionGD} applied to $g=\rho(\exp(tH))$ with $\fz=\fz_0$, we have
\begin{align*}
\d\rho(H)\,\left[\begin{smallmatrix} -1 \\ \fz_0 \\ 1 \end{smallmatrix} \right]
=\tfrac{\d}{\d t}\bigm|_{t=0}J(\rho(\exp(tH),\fz_0)\,\left[\begin{smallmatrix} 
-1\\ \fz_0 \\ 1 \end{smallmatrix}\right].
\end{align*}
with $\fz_0={}^t[\sqrt{-1},0,-\sqrt{-1}]$. Hence 
$$
\tfrac{\d}{\d t}\bigm|_{t=0}J(\rho(\exp(tH),\fz_0)=(-\tau_1,0,\tau_2) \fz_0=-\sqrt{-1}(\tau_1+\tau_2).
$$
On the other hand, from definition 
$$J_{\fh_2}(\exp(tH),i1_2)=\det\left[\begin{smallmatrix} e^{-\sqrt{-1} t\tau_1} & 0 \\ 0 & e^{-\sqrt{-1} t\tau_2} \end{smallmatrix}\right]=e^{-\sqrt{-1}t(\tau_1+\tau_2)}.$$
Hence we have $\tfrac{\d}{\d t}|_{t=0}J_{\fh_2}(\exp(tH),\sqrt{-1}\,1_2)=-\sqrt{-1}(\tau_1+\tau_2)$ as desired.\end{proof}

From Lemma~\ref{L:jcD}, \eqref{BergmannMetSp} and \eqref{BergmanMetO}, we see that the volume forms on $\fh_2$ and on $\cD$ are related by   
\begin{align}
\jj^{*}_{\cD}(\d\mu_{\cD})(Z)=\tfrac{1}{8}\,\d\mu_{\fh_2}(Z), 
 \label{BergmanMetO-Sp}
\end{align}
where $Z=\left[\begin{smallmatrix} z_1 & z_2 \\ z_2 & z_3 \end{smallmatrix}\right]\in \fh_2$ and $\d Z=\prod_{j=1}^{3}2^{-1}|\d z_j\wedge \d \bar z_j|$. 

Since ${\bf GSp}_2(\Z_p)$ stabilizes the lattice $V(\Z_p)\cong \cL_p$, we have the containment ${\bf GSp}_2(\Z_p)\subset \rho^{-1}({\bG}(\Z_p))$, which should be the equality because ${\bf GSp}_2(\Z_p)$ is a maximal compact subgroup of ${\bf GSp}_2(\Q_p)$, i.e., 
$$
\rho({\bf GSp}_2(\Z_p))={\bG}(\Z_p) \quad (p<\infty).
$$
Recall the spherical representations $\pi_p^{\rm ur}(\nu)$ defined in \S~\ref{sec:Intro} and $\pi_p^{\bG}(\nu)$ defined in \S~\ref{sec:Othogonal}; they are related by $\rho$ as expected. 
\begin{lem} For $\nu\in \fX_p$, $\pi_{p}^{\bG}(\nu)\circ \rho\cong \pi_{p}^{\rm ur}(\nu)$.
\end{lem}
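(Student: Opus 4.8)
The plan is to reduce the assertion to a comparison of the defining data of the two induced representations: both $\pi_p^{\rm ur}(\nu)$ and $\pi_p^{\bG}(\nu)\circ\rho$ are irreducible spherical representations of ${\bf G}(\Q_p)={\bf PGSp}_2(\Q_p)$ (the latter via the isomorphism $\rho$), so by the uniqueness of the spherical constituent it suffices to match their Satake parameters, equivalently to check that $\rho$ carries the relevant Borel subgroup, its maximal torus, and the inducing character $\chi_\nu$ of \eqref{GspBorelchar} to the Borel, torus, and character $\chi^{\bG}_{\nu_1,\nu_2}$ of \eqref{SOBorelchar}. First I would use formulas \eqref{rhoOnSP1} and \eqref{rhoOnSP2}, together with the remark already made in the text that the Siegel parabolic of ${\bf GSp}_2$ corresponds under $\rho$ to the maximal parabolic $\sP$ stabilizing the line $\Q\e_1$, to see that $\rho$ maps ${\bf B}(\Q_p)$ into the upper-triangular Borel ${\mathbb B}(\Q_p)$ of $\bG(\Q_p)$; since $\rho$ is a central isogeny and the center of ${\bf GSp}_2$ acts trivially on $V$, this is in fact an isomorphism ${\bf B}(\Q_p)\cong {\mathbb B}(\Q_p)$ onto.

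Next I would compute the effect of $\rho$ on the diagonal torus. Writing a general element of the torus of ${\bf GSp}_2$ as $\diag(t_1,t_2,\lambda t_1^{-1},\lambda t_2^{-1})$, formula \eqref{rhoOnSP2} with $A=\diag(t_1,t_2)$, $\nu=\lambda$ gives $\rho$ of it equal to $\sm(\lambda^{-1}t_1 t_2;\ss(\diag(t_1,t_2)))$, and by \eqref{ssFormula} $\ss(\diag(t_1,t_2))=\diag(t_1 t_2^{-1},1,t_1^{-1}t_2)$; hence the image is $\diag(\lambda^{-1}t_1 t_2,\;t_1 t_2^{-1},\,1,\,t_1^{-1}t_2,\,\lambda t_1^{-1}t_2^{-1})$, which in the parametrization $\diag(u_1,u_2,1,u_2^{-1},u_1^{-1})$ of the $\bG$-torus corresponds to $u_1=\lambda^{-1}t_1t_2$, $u_2=t_1 t_2^{-1}$. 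Then $\chi^{\bG}_{\nu_1,\nu_2}$ of \eqref{SOBorelchar} pulls back to the character sending $\diag(t_1,t_2,\lambda t_1^{-1},\lambda t_2^{-1})$ to $|\lambda^{-1}t_1t_2|_p^{\nu_1}|t_1 t_2^{-1}|_p^{\nu_2}=|t_1|_p^{\nu_1+\nu_2}|t_2|_p^{\nu_1-\nu_2}|\lambda|_p^{-\nu_1}$, and I would then observe that this agrees with $\chi_\nu^{-1}$ in \eqref{GspBorelchar} — the sign discrepancy is harmless because $\pi_p^{\rm ur}(w\nu)\cong\pi_p^{\rm ur}(\nu)$ for $w\in W(C_2)$ (and the longest element sends $\nu\mapsto -\nu$), so $\chi_\nu$ and $\chi_{-\nu}$ yield the same spherical constituent. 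Thus $I_p^{\bG}(\nu)\circ\rho$ and $I_p(\nu)$ have the same central character data up to the Weyl group, and their unique spherical constituents coincide.

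Finally, I would invoke that $\rho({\bf GSp}_2(\Z_p))=\bG(\Z_p)$, already established just above the Lemma, to conclude that a ${\bf GSp}_2(\Z_p)$-fixed vector for $\pi_p^{\bG}(\nu)\circ\rho$ is the same as a $\bG(\Z_p)$-fixed vector for $\pi_p^{\bG}(\nu)$, so that $\pi_p^{\bG}(\nu)\circ\rho$ is spherical, and therefore equals the spherical constituent $\pi_p^{\rm ur}(\nu)$ of $I_p(\nu)$. The only genuinely delicate point is bookkeeping the normalization of the characters: making sure the unnormalized induction in \eqref{GspBorelchar}–\eqref{SOBorelchar} is used consistently on both sides and that the modulus characters of the two Borel subgroups match up under $\rho$ (they must, since $\rho$ is an isomorphism of the two groups and carries one Borel to the other), so that no extra half-power of $|\cdot|_p$ is lost; I expect this to be the main — though routine — obstacle, and it is resolved purely by the explicit torus computation above combined with the $W(C_2)$-invariance of $\pi_p^{\rm ur}(\nu)$.
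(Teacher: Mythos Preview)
Your approach is essentially identical to the paper's: both proofs use \eqref{rhoOnSP1}--\eqref{rhoOnSP2} to show $\rho({\bf B})={\mathbb B}$, compute the image of a diagonal torus element explicitly (obtaining exactly your $u_1=\lambda^{-1}t_1t_2$, $u_2=t_1t_2^{-1}$), compare the pulled-back character with $\chi_\nu$, and then use $\rho({\bf G}(\Z_p))=\bG(\Z_p)$ to match spherical constituents.

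One small correction: the pullback $|t_1|_p^{\nu_1+\nu_2}|t_2|_p^{\nu_1-\nu_2}|\lambda|_p^{-\nu_1}$ is not $\chi_\nu^{-1}=\chi_{-\nu}$ but rather $\chi_{(-\nu_1,\nu_2)}$ (check the $t_1,t_2$ exponents). This does not affect your argument, since $(-\nu_1,\nu_2)=s_1s_2s_1(\nu)$ is still in the $W(C_2)$-orbit of $\nu$, so your appeal to Weyl invariance is valid. (The paper simply asserts $\chi^{\bG}_\nu\circ\rho=\chi_\nu$ without mentioning a Weyl twist; your explicit tracking of the sign is arguably more careful.)
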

\begin{proof} By \eqref{rhoOnSP1} and \eqref{rhoOnSP2}, we see that $\rho({\bf B})={\mathbb B}$ and
$$\rho(\diag(t_1,t_2,\lambda t_1^{-1},\lambda t_2^{-1}))=\diag(a_1,a_2,1,a_1^{-1},a_2^{-1}) 
$$
with $a_1=\lambda^{-1}t_1t_2$ and $a_2=t_1t_2^{-1}$ for $(t_1,t_2,\lambda)\in (\Q_p^{\times})^3$. For $(a_1,a_2)$ and $(t_1,t_2,\lambda)$ related by this equation, it is easy to confirm 
$$
\chi_\nu^{\bG}(\diag(a_1,a_2,1,a_1^{-1},a_2^{-1})=\chi_\nu(\diag(t_1,t_2,\lambda t_1^{-1},\lambda t_2^{-1}))
$$
by \eqref{GspBorelchar} and \eqref{SOBorelchar}. Thus $\chi_{\nu}^{\bG}\circ \rho=\chi_{\nu}$, which implies $I_p^{\bG}(\nu)\circ \rho=I_p(\nu)$ for any $\nu \in \fX_p$. Since $\rho({\bf G}(\Z_p))=\bG(\Z_{p})$, the ${\bf G}(\Z_p)$-spherical constituent $\pi_p^{\rm ur}(\nu)$ of $I_p(\nu)$ and the $\bG(\Z_p)$-spherical constituent $\pi_{p}^{\bG}(\nu)$ of $I_p^{\bG}(\nu)$ corresponds to each other by $\rho$. 
\end{proof}

\begin{prop} \label{SpO-Isom}
 The map ${\rm F} \mapsto \Phi$ defined as
$$
\Phi(Z)={\rm F}(j_{\cD}(Z),1), \quad Z\in \fh_2
$$
yields a linear bijection $j^{*}_{\cD}:S_l(\bK_\fin^*) \rightarrow S_l({\bf Sp}_2(\Z))$ preserving the actions of the Hecke algebras under the isomorphism $\rho^{*}:\cH(\sG(\A_\fin)\sslash \bK_\fin)\rightarrow \cH({\bf G}(\A_\fin)\sslash {\bf G}(\widehat \Z))$. Let ${\rm F} \in S_l(\bK_\fin^*)$ be a Hecke eigenfunction and set $\Phi=j_{\cD}^{*}({\rm F})$; then 
\begin{align*}
L_\fin(s,\pi_\Phi)&=L_\fin(s,{\rm F}), \quad \|\Phi\|^2=16\,\|{\rm F}\|^2.
\end{align*}
Moreover, for any fundamental discriminant $D<0$ and for any character $\chi$ of ${\rm Cl}_D$, we have
$$
 R(\Phi,D,\chi)={\rm R}({\rm F},D,\chi).
$$
\end{prop}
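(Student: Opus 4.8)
The plan is to prove the three assertions in turn: the bijection $j_{\cD}^{*}$ with its Hecke-equivariance, then the identities $L_\fin(s,\pi_\Phi)=L_\fin(s,{\rm F})$ and $\|\Phi\|^{2}=16\|{\rm F}\|^{2}$, and finally the period identity $R(\Phi,D,\chi)={\rm R}({\rm F},D,\chi)$, which is the only part carrying real content. For the first assertion everything is essentially forced. Since $j_{\cD}:\fh_2\to\cD$ is a biholomorphism satisfying \eqref{Rho-RhocD} and, by Lemma~\ref{L:jcD}, carries $\det(CZ+D)$ to $J(\rho(g),j_{\cD}(Z))$, I would argue that the automorphy condition \eqref{Spautomorphy} for $\Phi$ is equivalent to the reduced form $F(\gamma\langle\fz\rangle)=J(\gamma,\fz)^{l}F(\fz)$ $(\gamma\in\Gamma^{+}(Q))$ of \eqref{Gautomorphy} for ${\rm F}$; here one uses that $\rho(-1_4)=1_5$, that $\Gamma^{+}(Q)$ and $\rho({\bf Sp}_2(\Z))$ have the same image in the biholomorphism group of $\cD$ (the remaining central element $-1_5$ acts trivially, and $(-1)^{l}=1$), and that $\sG(\A_\fin)=\sG(\Q)^{+}\bK_\fin$, which lets one pass between the two descriptions of $S_l(\bK_\fin)$. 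Holomorphy and boundedness transport along $j_{\cD}$. The map $\rho^{*}:\cH(\sG(\A_\fin)\sslash\bK_\fin)\to\cH({\bf G}(\A_\fin)\sslash{\bf G}(\widehat\Z))$ is the isomorphism induced by $\rho$, using $\rho({\bf GSp}_2(\Z_p))=\bG(\Z_p)$ and $\sG=\bG\times\{\pm1_5\}$; and $j_{\cD}^{*}$ intertwines the Hecke actions because on both sides these are right convolution in the finite-adelic variable, which $\rho$ transports.

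For the second assertion, the Hecke-equivariance just obtained together with the lemma $\pi_p^{\bG}(\nu)\circ\rho\cong\pi_p^{\rm ur}(\nu)$ forces the Satake parameters of $\Phi$ and of ${\rm F}$ to agree in $\fX_p/W(C_2)$ at every prime $p$; since in those parameters the spinor factor $L(s,\pi_p^{\rm ur}(\nu))$ and the standard factor $L_p(s,\lambda_{{\rm F},p})$ are literally the same degree-$4$ rational function of $p^{-s}$, one obtains $L_\fin(s,\pi_\Phi)=L_\fin(s,{\rm F})$. For the norm I would transport $\|\Phi\|^{2}$ along $j_{\cD}$, match the weight $(\det\Im Z)^{l}$ with the corresponding automorphy factor on $\cD$ via Lemma~\ref{L:jcD}, and use $\vol(\bK_\fin)=1$ together with \eqref{BergmanMetO-Sp} (a factor $8$); the constant $16=2\cdot 8$ then appears, the extra $2$ coming from the central element $-1_5$, which lies in $\bK_\fin\cap\sG(\Q)^{+}=\Gamma^{+}(Q)$ and so collapses the finite-adelic integration in $\langle{\rm F},{\rm F}\rangle$ by a further $\tfrac12$ while acting trivially on $\cD$.

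The period identity is where the argument does work, and it reduces to two verifications. First, substituting $j_{\cD}(Z)={}^{t}[z_1,-z_2,-z_3]$ from Lemma~\ref{L:jcD} into the Fourier expansion of ${\rm F}$ and comparing it with $\Phi(Z)=\sum_{T\in\cQ^{+}}A_{\Phi}(T)\,e^{2\pi\sqrt{-1}\tr(ZT)}$, a bookkeeping of signs and of the matrix-versus-column identifications of \S~\ref{sec:Tern} and \S~\ref{sec:Othogonal} should give the coefficientwise relation $A_{\Phi}(T)=a_{{\rm F}}(1;T w)$ for all $T\in\cQ^{+}$, in particular for $T\in\cQ^{+}_{\rm prim}(D)$. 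Second, one must check that the isomorphism ${\bf SL}_2(\Z)\bsl\cQ^{+}_{\rm prim}(D)\cong{\rm Cl}_D$ used in \S~\ref{sec:Intro} to define $[T]$ coincides with the composite of the bijections of Lemmas~\ref{L-GxiDclassBiQ} and \ref{L-CLd-stab} through which $c_j$ is defined, so that $\chi([T_j])=\chi(c_j)$ for a chosen complete set of representatives $\{T_j\}_{j=1}^{h_D}$. Granting both,
\[
R(\Phi,D,\chi)=\sum_{j=1}^{h_D}A_{\Phi}(T_j)\,\chi([T_j])=\sum_{j=1}^{h_D}a_{{\rm F}}(1;T_j w)\,\chi(c_j)={\rm R}({\rm F},D,\chi).
\]
I expect the second verification to be the main obstacle: one must reconcile the classical form-to-ideal-class dictionary encoded in $[T]$ with the orthogonal-group adelic double-coset description furnished by Lemmas~\ref{L-GxiDclassBiQ} and \ref{L-CLd-stab} (through the embedding $\ss\circ\iota$ and the identification ${\bf GO}(T_D)^{0}\cong E^{\times}$), and confirm that the two agree on the nose rather than differing by inversion or by the Galois involution $\sigma$ of ${\rm Cl}_D$ --- which need not fix $\chi$. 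Everything else is routine tracking of normalizations.
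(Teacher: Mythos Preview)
Your proposal is correct and follows the same route as the paper, which in fact gives an even terser proof --- the paper only spells out the norm comparison and leaves the bijection, Hecke-equivariance, $L$-function identity, and period identity as implicit consequences of the preceding lemmas.

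One small point of phrasing: your attribution of the extra factor $2$ to $-1_5$ ``collapsing the finite-adelic integration'' is not quite the right mechanism. The finite-adelic integral already has $\vol(\bK_\fin)=1$ regardless. The paper's formulation is cleaner: $\|{\rm F}\|^2$ is an integral over $\Gamma^{+}(Q)\bsl\cD$, whereas ${\bf Sp}_2(\Z)\bsl\fh_2$ is carried by $j_{\cD}$ onto $\bG(\Z)\bsl\cD$ (more precisely, onto $(\bG(\Z)\cap\sG(\R)^0)\bsl\cD$), which is a \emph{double cover} of $\Gamma^{+}(Q)\bsl\cD$ since $\Gamma^{+}(Q)$ contains the extra central element $-1_5\notin\bG$ acting trivially on $\cD$. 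So the archimedean fundamental domain on the symplectic side is twice as large, giving $\|\Phi\|^2=2\cdot 8\cdot\|{\rm F}\|^2$. Your identification of $-1_5$ as the culprit is correct; only the location of the factor (archimedean quotient, not finite-adelic volume) needs adjusting.
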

\begin{proof} The relation between $\|{\rm F}\|^2$ and $\|\Phi\|^2$ follows from \eqref{BergmanMetO-Sp}. Here, a care is necessary because $\|{\rm F}\|^2$ is defined by the integral over $\Gamma^{+}(Q)\bsl \cD$ whereas $j_{\cD}$ is bijective only on the double cover $\bG(\Z)\bsl \cD$ of $\Gamma^{+}(Q)\bsl \cD$.
\end{proof}

\end{document}